\newtheorem{theorem}{Theorem}[section]
\newtheorem{lemma}[theorem]{Lemma}
\newtheorem{proposition}[theorem]{Proposition}
\newtheorem{definition}[theorem]{Definition}
\newtheorem{remark}[theorem]{Remark}
\numberwithin{equation}{section}
\let\div\relax
\DeclareMathOperator\div{div}
\DeclareMathOperator\Id{Id}
\DeclareMathOperator\supp{supp}
\newcommand\newD{\tilde{\mathcal{D}}}
\title[Instantaneous Type I blow-up for the Navier--Stokes]{Instantaneous Type I blow-up and non-uniqueness of smooth solutions of the Navier--Stokes equations}
\author [Alexey Cheskidov]{Alexey Cheskidov}
\address{Institute for Theoretical Sciences, Westlake University, China}
\email{cheskidov@westlake.edu.cn} 
\author [Mimi Dai]{Mimi Dai}
\address{Department of Mathematics, Statistics and Computer Science, University of Illinois at Chicago, Chicago, IL 60607, USA}
\email{mdai@uic.edu} 
\author [Stan Palasek]{Stan Palasek}
\address{Department of Mathematics, Princeton University \& Institute for Advanced Study, 1 Einstein Dr., Princeton, NJ 08540}
\email{palasek@ias.edu} 
\begin{document}

\begin{abstract}
For any smooth, divergence-free initial data, we construct a solution of the Navier--Stokes equations that exhibits Type I blow-up of the $L^\infty$ norm at time $T_*>0$, while remaining smooth in space and time on $\mathbb T^d\times([0,T]\setminus\{T_*\})$. An instantaneous injection of energy from infinite wavenumber initiates a bifurcation from the classical solution, producing an infinite family of spatially smooth solutions with the same data and thereby violating uniqueness of the Cauchy problem. A key ingredient is the first known construction of a complete inverse energy cascade realized by a classical Navier--Stokes flow, which transfers energy from infinitely high to low frequencies.  The result holds in all dimensions $d\geq2$.
\end{abstract}

\maketitle

\begingroup
\setlength{\parskip}{0pt}
\tableofcontents
\endgroup

\section{Introduction}

We consider the incompressible Navier--Stokes equations 
\begin{equation}\label{eq:NSE}
\begin{cases}
\partial_t u -  \nu \Delta u + \div( u \otimes u) + \nabla p = 0 &\\
\div u = 0
\end{cases}
\end{equation}
on $\mathbb T^d\times [0,T]$ for some $T>0$ and any spatial dimension $d\geq 2$, equipped with divergence-free initial data at $t=0$. The real parameter $\nu>0$ represents viscosity, which we typically normalize to $1$ without loss of generality. We say that a solution $(u,p)$ of \eqref{eq:NSE} is \emph{classical} if it is smooth on $\mathbb T^d\times[0,T]$ in space and time. The pioneering work of Leray~\cite{Leray1934} established local-in-time existence of classical solutions from any smooth initial data, as well as global existence of \emph{weak} solutions to~\eqref{eq:NSE} in any dimension $d\ge2$, for any finite-energy initial data.

\subsection{An inquiry into blow-up phenomena}
\label{sec:def-blowup}
A long-standing question is whether \eqref{eq:NSE} allows \emph{finite-time blow-up}, which refers to a scenario in which the unique classical solution from some initial data becomes singular in the sense that
\begin{equation}\label{eq:finite_time_blow_up_definition}
\lim_{t\to T_*-} \|u(t)\|_{L^\infty} = \infty.
\end{equation}
When $d=2$, finite-time blow-up was ruled out in the whole space by Leray~\cite{Leray1934} and in bounded domains by Ladyzhenskaya~\cite{Ladyzhenskaya1967}. For $d\geq3$, the question is open. Demonstration of a finite-time blow-up for $d=3$ would, by uniqueness, contradict global existence, resolving the Millennium Prize Problem on global regularity for the 3D Navier--Stokes equations in the negative direction.

The purpose of this paper is to introduce and prove existence of a related but, to our knowledge, previously unexplored blow-up phenomenon for the Navier--Stokes equations from smooth initial data. We say that a solution $u(x,t)$ exhibits \emph{smooth instantaneous blow-up} at time $T_*$ if the following conditions are met:
\begin{itemize}[left=1em]
\item $u$ is a classical solution of \eqref{eq:NSE} on $\mathbb T^d\times([0,T]\setminus\{T_*\})$,
\item $u$ is spatially smooth at the blow-up time (i.e., $u(T_*)\in C^\infty(\mathbb T^d)$) and remains a genuine weak solution (which holds automatically assuming suitable time continuity at $t=T_*$; see Lemma~\ref{le-weak}),
\item The solution blows up from the right in the sense that
\begin{align}\label{eq:instantaneous_blow_up_definition}
    \lim_{t\to T_*+} \|u(t)\|_{L^\infty} = \infty.
\end{align}
\end{itemize}
In the sequel, we show that a smooth instantaneous blow-up can appear for any smooth divergence-free initial data, and in any spatial dimension $d\geq2$. This behavior appears to be new for the Navier--Stokes equations and for parabolic equations in general.

An alternative perspective on instantaneous blow-up is as a finite-time blow-up that proceeds backwards in time, in the sense that norms of the solution become large as $t\to T_*+$. One might expect such a blow-up to occur more easily than the conventional forward-in-time blow-up because it would be facilitated by the Laplacian. We emphasize that this is not the case. Indeed, many of the critical regularity criteria (for instance, Ladyzhenskaya--Prodi--Serrin and Onsager-type ones) that are well-known to prevent finite-time blow-up also prevent instantaneous blow-up. In fact, the time asymmetry at the heart of the construction is not from the Laplacian, but the nonlinearity. Specifically, the key observation is that nonlocal interactions in frequency space are most robust from high to low frequencies forwards in time, or equivalently, from low to high frequencies backwards in time.

If a smooth solution of Navier--Stokes undergoes an instantaneous blow-up at $t=T_*$ as defined above, failure of uniqueness and even weak--strong uniqueness of the initial value problem directly follow. Indeed, because the solution is smooth at the blow-up time, one may apply standard local well-posedness theory with initial data $u(T_*)$ to obtain a classical solution $U$ that agrees with $u$ on $[0,T_*]$, but remains uniformly bounded in a neighborhood of the blow-up time. By \eqref{eq:instantaneous_blow_up_definition}, $u$ and $U$ must be distinct after $T_*$. In fact, in the examples constructed here, something much stronger is true: the classical solution $U$ is the limit of an infinite family of solutions that exhibit instantaneous blow-up at $t=T_*$ (see Figure~\ref{fig:Nonuniqueness}). The family consists of quasi-periodic solutions on the whole space that exhibit translation symmetry-breaking in the sense that the solutions spontaneously lose periodicity. A discrete subset of the family consists of genuinely periodic solutions with the same period as the background classical solution. The solutions we exhibit blow up in the Type~I sense (see Subsection \ref{subsec-type-I} for definitions and discussion) and lie arbitrarily close to known criteria for uniqueness and weak--strong uniqueness of Navier--Stokes weak solutions.

\subsection{Main results}\label{sec-results}

Our first main result is on the existence of instantaneous Type~I blow-up for the Navier--Stokes equations in $\mathbb T^d$ for general initial data.

\begin{theorem} \label{main-thm}
For any $d\geq2$ and $u_0 \in C^\infty(\mathbb{T}^d)$ divergence-free, there exists $T>0$ such that the following holds. For any $T_*\in[0,T)$, there exists a weak solution $u(t)$ of \eqref{eq:NSE} on $\mathbb T^d\times[0,T]$ such that:
\begin{enumerate}[1.,ref=\arabic*,left=1em]
    \item For $t\in[0,T_*]$, $u$ is a classical solution of \eqref{eq:NSE} with initial data $u(0)=u_0$,\label{blow_up_theorem_classical_1}
    \item For $t\in(T_*,T]$, $u$ is a classical solution of \eqref{eq:NSE} such that, for some $c>0$,
    \[
    \|u(t_n)\|_{L^\infty} \geq \frac{c}{\sqrt{t_n-T_*}}, \qquad  \|\nabla\wedge u(t_n)\|_{L^\infty} \geq \frac{c}{t_n-T_*},
    \]
    along a sequence of times $t_n \to T_*+$,\label{blow_up_theorem_classical_2_with_lower_bound}
    \item The solution $u(t)$ is weak-* continuous in time with values in $BMO^{-1}$,\label{blow_up_theorem_continuity}
    \item The blow-up is of Type~I:
    \[
    \|u(t)\|_{L^\infty} \leq \frac{C}{\sqrt{t-T_*}}, \qquad  \|\nabla u(t)\|_{L^\infty} \leq \frac{C}{t-T_*},
    \]
    for all $t\in(T_*,T]$ and some constant $C>0$. When $d\geq3$, we additionally have $u\in L_t^\infty \dot W^{-1,\infty}_x$,\label{blow_up_theorem_type_i}
    \item The solution obeys the logarithmically-weakened Ladyzhenskaya--Prodi--Serrin bound
\begin{equation}\label{logarithmically_weakened_prodi_serrin}
\int_0^T\frac{\|u(t)\|^2_\infty}{1+(\log\log(e+\|u(t)\|_\infty))^c}dt<\infty,
\end{equation}
and Beale--Kato--Majda condition
\begin{equation}\label{logarithmically_weakened_BKM}
\int_0^T\frac{\|\nabla u(t)\|_\infty}{1+(\log\log(e+\|\nabla u(t)\|_\infty))^c}dt<\infty,
\end{equation}
for some $c>0$. Moreover,
\begin{equation}
    u\in L^2([0,T];L^p),
\end{equation}
for all $p<\infty$.\label{blow_up_theorem_prodi_serrin}
\end{enumerate}
\end{theorem}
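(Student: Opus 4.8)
The plan is to superimpose on the classical solution a high-frequency perturbation realizing a complete inverse energy cascade that is ``born at wavenumber infinity'' at time $T_*$. First, by Leray's local existence, solve \eqref{eq:NSE} from $u_0$ to obtain a classical solution $U$ on some $[0,T]$, so that $U(T_*)\in C^\infty(\mathbb T^d)$; it then suffices to build, on $(T_*,T]$, a classical solution $u=U+v$ of \eqref{eq:NSE} in which $v$ solves the Navier--Stokes system perturbed by the (smooth, hence subordinate to the cascade dynamics once the cascade is started at a high enough shell) drift and stretching by $U$, and $v$ (i) is supported essentially at frequencies $\gg1$ and is smooth for $t>T_*$, (ii) satisfies $v(t)\rightharpoonup^*0$ in $BMO^{-1}$ as $t\to T_*+$ while $\|v(t)\|_{BMO^{-1}}\gtrsim\delta>0$ --- the energy injected from infinite wavenumber, which is exactly what breaks weak-strong uniqueness without contradicting Part~3 or the Koch--Tataru theory, since $v$ is \emph{not} strongly continuous at $T_*$ --- and (iii) obeys the quantitative bounds of Parts~2,4,5 with $T_*$ replacing $0$. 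Parts~1 and~3 then follow (the latter via Lemma~\ref{le-weak}), and after a time translation one reduces to $T_*=0$, $u_0=0$.

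For the profile I would fix a rapidly growing sequence of dyadic shells $\lambda_{q_n}=2^{q_n}$, with $q_n$ at least exponential in $n$ so that $t_n:=\lambda_{q_n}^{-2}\downarrow0$ and $\log\log\lambda_{q_n}\sim n$; this sparsity is forced by \eqref{logarithmically_weakened_prodi_serrin}. On shell $q_n$ put a fixed divergence-free field $W_{q_n}$ on $\mathbb T^d$ with $\|W_{q_n}\|_\infty\sim1$, built from a packet of wavevectors of modulus $\sim\lambda_{q_n}$ whose pairwise differences hit the lower shell $\lambda_{q_{n-1}}$ --- a ``beating'' configuration available for $d\ge2$ that lets the quadratic nonlinearity transfer energy from shell $q_n$ to shell $q_{n-1}$ while leaving the inactive intermediate shells untouched. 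Projecting \eqref{eq:NSE} onto the ansatz $v\approx\sum_n a_{q_n}(t)W_{q_n}(x)$ and keeping the resonant interactions yields a dyadic shell-model system oriented so that flux runs from high to low shells; the geometric coupling constants are ours to choose through the $W_{q_n}$, and I would tune them (small, a fixed power of $\lambda_{q_{n-1}}/\lambda_{q_n}$) so that shell $q_n$ is dormant until $t\sim t_{n+1}$, is then driven by the dissipating shell $q_{n+1}$, rises across its window to the Type~I peak $a_{q_n}(t_n)\sim\lambda_{q_n}\sim t_n^{-1/2}$, and there passes its energy on to shell $q_{n-1}$ while being damped --- so that $\|v(t)\|_\infty\sim t^{-1/2}$ exactly along $\{t_n\}$ and $\ll t^{-1/2}$ in between. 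Showing that such a cascade is realized by an \emph{honest} Navier--Stokes flow --- running the right way, at the right rate, neither stalling nor overshooting the Type~I bound --- is the heart of the argument.

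Writing $v=v_{\mathrm{lead}}+w$ with $v_{\mathrm{lead}}=\sum_n a_{q_n}W_{q_n}$, the error in \eqref{eq:NSE} consists of the non-resonant parts of the interactions of the $W_{q_n}$ (supported between the active shells or in the zero mode) together with the $U$-terms, all small; I would then solve for $w$ by a Picard iteration in a norm built from $\sup_t\big(\sqrt t\,\|v\|_\infty+t\,\|\nabla v\|_\infty\big)$ reinforced with frequency-localized weights encoding the exponential damping of each shell past its window and the super-exponential gaps between active shells. This norm is critical for the Navier--Stokes scaling, so the iteration closes only because the off-diagonal (shell-to-shell) terms carry a gain that is a negative power of the number of inactive shells in between --- the same sparsity that produces \eqref{logarithmically_weakened_prodi_serrin}--\eqref{logarithmically_weakened_BKM}. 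Parabolic regularity upgrades $v$ to a classical solution for $t>T_*$ (smoothness in Part~2), the peak amplitudes give the lower bounds for $u$ and $\nabla\wedge u$ at $t_n$ (Part~2), near $T_*$ the solution is a structure at frequency $\lambda_{q_n}$ with $\lambda_{q_n}\to\infty$, uniformly bounded in $BMO^{-1}$ but weak-$*$ null (Part~3), the Type~I upper bounds are the defining estimates of the Picard norm (Part~4, with $v\in L^\infty_t\dot W^{-1,\infty}_x$ for $d\ge3$ by choosing spatially localized profiles, which incompressibility precludes in $d=2$), and each shell contributes $O(1)$ to $\int\|u\|_\infty^2\,dt$ but only $O(n^{-c})$ to \eqref{logarithmically_weakened_prodi_serrin}--\eqref{logarithmically_weakened_BKM} (summable for $c>1$) and a negative power of $\lambda_{q_n}$ to $\int_0^T\|u\|_p^2\,dt$ for each $p<\infty$ (Part~5).

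The main obstacle is the second step: exhibiting an honest Navier--Stokes flow --- not merely a dyadic shell model --- that sustains a \emph{complete} inverse cascade, i.e., controlling the non-resonant interactions among the shell profiles while simultaneously forcing the resonant dynamics to ignite each shell to its Type~I peak and hand the energy downward, against dissipation, perpetually down an infinite tower of scales, all within the scale-invariant Type~I class where there is no slack. The cascade runs in the ``wrong'' direction --- high-to-low frequency forwards in time --- for precisely the reason noted in the introduction, that nonlocal frequency interactions are most robust from high to low frequencies forwards in time; converting that heuristic into quantitative, iterable estimates, with sharp enough constants to both fire the cascade and keep it Type~I, is where essentially all the difficulty lies.
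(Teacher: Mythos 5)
Your overall architecture is the right one and matches the paper in spirit: a prescribed leading-order perturbation supported on super-exponentially sparse frequency shells that hand energy downward, heat-type temporal profiles, a corrector obtained by a fixed point in a scale-weighted norm, gluing to $U$ across $T_*$ via Lemma~\ref{le-weak}, and sparsity of the shells producing the $\log\log$-weakened bounds \eqref{logarithmically_weakened_prodi_serrin}--\eqref{logarithmically_weakened_BKM}. But the step you yourself identify as ``the heart of the argument'' is genuinely missing, and it is exactly the step the paper supplies. You propose to project onto an ansatz $\sum_n a_{q_n}(t)W_{q_n}(x)$, derive a resonant shell-model ODE system, and prove that its dynamics ignite each shell to its Type~I peak and hand energy down an infinite tower; nothing in the proposal says how the ``beating'' packets are arranged so that the self-interaction of shell $n+1$ reproduces \emph{exactly} the prescribed shell-$n$ vector field (as a divergence-free field with the correct spatial amplitude), nor how the consistency of this requirement propagates down the infinite hierarchy. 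The paper resolves this algebraically, not dynamically: it uses $J_d=d(d+1)/2$ shear directions $\theta_j\sin(N_{j,k}\eta_j\cdot x)$ with amplitudes chosen through the smooth rank-one decomposition of symmetric matrices so that the identity \eqref{a-convex-integration-identity}, $\sum_j a_{j,k+1}^2\,\theta_j\otimes\theta_j=2\mathcal D\sum_jN_{j,k}\psi_{j,k}+p\,\Id$, holds, and then \emph{defines} $v_k=-\int_0^t e^{(t-s)\Delta}\mathbb P\div(\bar v_{k+1}\otimes\bar v_{k+1})\,ds$ with $\bar v_{j,k}=-N_{j,k}e^{-N_{j,k}^2t}\Delta\psi_{j,k}$. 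The cascade is thus exact by construction; no shell-model ODE is ever solved, the ``ignition'' is just the Duhamel integral saturating on the fast time scale $N_{1,k+1}^{-2}\ll t_k$, and the entire analytic burden is reduced to showing that $v=\sum_k v_k$ solves a forced Navier--Stokes system with a force that is subcritically small (Propositions~\ref{barv-I_estimate_proposition}--\ref{prop_f_estimate}), via commutator/oscillation estimates for the heat semigroup. Your plan, as stated, defers precisely this mechanism, so it is a gap rather than a proof.

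A second, smaller but real gap is the corrector. A plain Picard iteration in a purely critical norm such as $\sup_t(\sqrt t\,\|v\|_\infty+t\|\nabla v\|_\infty)$ around a background whose critical norm is $O(1)$ (not small) does not close: the drift term $2(U+v)\odot w$ is of critical size and cannot be absorbed by smallness of $w$ alone. The paper closes the loop by (i) working in the slightly \emph{subcritical} space $X$ of \eqref{def-X}, which is possible only because the error $f$ gains a power $t^{\alpha}$ over critical scaling (Proposition~\ref{prop_f_estimate}); and (ii) proving estimates for the semigroup $S(t,t')$ of the equation linearized around $U^{1/N_0}+v$ with only an $(t/t')^{\epsilon}$ loss (Proposition~\ref{prop-semi}), which in turn hinges on the quantitative form of your sparsity heuristic: the critical norms of $v$ grow only like $1+(\log A)^{-1}\log(t/t')$ as in \eqref{v-critical-bounds}, so Gr\"onwall costs only a small power. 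Your ``frequency-localized weights encoding exponential damping'' gestures at this, but without the subcritical gain in the error and the linearized-semigroup bound the fixed point does not close at critical regularity, and the claimed Type~I and $BMO^{-1}$/Koch--Tataru conclusions (Parts~3--4) would not follow.
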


\begin{remark}
    We emphasize that, by definition, classical solutions are smooth in space and time. In particular, combining Properties~\ref{blow_up_theorem_classical_1}--\ref{blow_up_theorem_classical_2_with_lower_bound}, the solution $u(t)\in C^\infty(\mathbb T^d)$ for every $t\in[0,T]$, including at the blow-up time. In fact, such solutions are analytic in space and time away from $t=T_*$; see \cites{MR245989,MR870865}. See Figure~\ref{fig:blowup-from-the-right} for a schematic depiction of the regularity of the solution.
\end{remark}

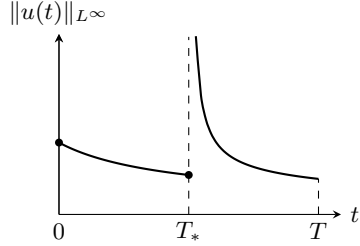
\begin{figure}[ht]
\centering
\begin{tikzpicture}[scale=0.75,>=stealth]
  \draw[->] (0,0) -- (5,0) node[right] {\small $t$};
  \draw[->] (0,0) -- (0,3.2) node[above] {\small $\|u(t)\|_{L^\infty}$};
  \draw[->] (0,0) -- (0,3.2);
  \draw[dashed] (2.3,0) -- (2.3,3.15);
  \draw[dashed] (4.6,0) -- (4.6,0.6305);
  \draw[thick,domain=2.426:4.6,smooth,variable=\x] plot ({\x},{1/(3.2*(\x-2.3))+1.65/(\x+1)+0.2});
  \fill (2.3,0.7) circle (2pt);
  \fill (0,1.275) circle (2pt);
  \draw[thick,domain=0:2.3,smooth,variable=\x] plot ({\x},{2.15/(\x+2)+0.2});
  \node at (0,-0.3) {\small $0$};
  \node at (2.3,-0.3) {\small $T_*$};
  \node at (4.6,-0.3) {\small $T$};
\end{tikzpicture}
\caption{Instantaneous blow-up at $t=T_*$.}
\label{fig:blowup-from-the-right}
\end{figure}

\begin{remark} \label{Orlicz-type_estimate}
Properties~\ref{blow_up_theorem_type_i}--\ref{blow_up_theorem_prodi_serrin} establish that the solution is at the borderline of the known regularity criterion $L^2_tL^\infty_x$; see Section \ref{subsec-type-I} for further discussion. We remark that by a straightforward modification of the proofs, the Orlicz-type estimate \eqref{logarithmically_weakened_prodi_serrin} can be strengthened to
\[
\int_0^T\frac{\|u(t)\|^2_\infty}{1+f(\|u(t)\|_\infty)}dt<\infty
\]
for \emph{any} desired $f:[0,\infty)\to[0,\infty)$ satisfying $f(x)\to+\infty$ as $x\to +\infty$. Indeed, the construction involves an increasing sequence of frequency scales $N_k$ indexed by $k\in\mathbb N$; $f$ is allowed to grow roughly as the inverse function of $k\mapsto  N_k$. For concreteness, the proof is given with $N_k$ double-exponential, but it can be executed just as well with any sufficiently fast growth rate.

It is interesting to observe that forward self-similar solutions (discussed in Section \ref{subsub-self-similar}) saturate the Type~I bound at \emph{every} time and therefore cannot possibly obey slightly supercritical spacetime estimates of this form.
\end{remark}

\begin{remark}
    The solution $u$ lies in neither $L_t^\infty L_x^2$ nor $L_t^2H_x^1$ near the blow-up time. We emphasize that an instantaneous blow-up of the energy is not a limitation of the construction, but in fact a \emph{necessary} condition for instantaneous Type~I blow-up of the $L^\infty$ norm; see Theorem~\ref{thm:energy_blow-up}. 
\end{remark}

\begin{remark}
With $T^{u_0}\in(0,\infty]$, the maximum time of existence for the classical solution with initial data $u_0$, the time $T$ in Theorem~\ref{main-thm} can be taken as any $T<T^{u_0}$. One can apply the theorem repeatedly to construct $u$ on the full lifespan $[0,T^{u_0})$, although one loses uniform estimates in the case $T^{u_0}<\infty$.
\end{remark}

\begin{remark}
    The type of blow-up demonstrated in Theorem~\ref{main-thm} does not appear to have been identified before for the Navier--Stokes equations or other fluid models. As the closest available precedent, we point to the instantaneous loss of regularity phenomenon appearing in work of C\'ordoba, Mart\'inez-Zoroa, and O\.za\'nski for the 2D Euler equations~\cite{MR4776418} and the fractionally dissipative SQG equations~\cite{MR4800679}. They construct solutions with initially non-smooth data, say in $H^{1+\epsilon}$, which immediately drop down to lower regularity $H^s$ for some $s<1+\epsilon$. Also, there are examples of self-similar non-unique solutions to semilinear equations \cite{HW1982}. See also Section~\ref{sec:inverse_cascade} for another comparison.
\end{remark}

As a consequence of the construction leading to Theorem~\ref{main-thm}, we prove that classical solutions of the Navier--Stokes equations can spontaneously lose uniqueness by an infinite energy cascade from high frequencies, while remaining $C^\infty(\mathbb T^d)$ at each time; see Figure~\ref{fig:Nonuniqueness}.

\begin{figure}[ht]
\centering
\begin{tikzpicture}[scale=0.75,>=stealth]
  \draw[->] (0,0) -- (4.6,0) node[right] {\small $t$};
  \draw[->] (0,0) -- (0,3.2) node[above] {\small $\|u(t)\|_{L^\infty}$};
  \draw[->] (0,0) -- (0,3.2);
  \draw[dashed] (2.3,0) -- (2.3,3.15);
  \draw[domain=2.3203826:4.5,smooth,variable=\x, samples=500] plot ({\x},{1/(20*(\x-2.3))+1.65/(\x+1)+0.2});
  \draw[domain=2.3812344:4.5,smooth,variable=\x, samples=300] plot ({\x},{1/(5*(\x-2.3))+1.65/(\x+1)+0.2});
  \draw[domain=2.5017104:4.5,smooth,variable=\x] plot ({\x},{1/(2*(\x-2.3))+1.65/(\x+1)+0.2});
  \draw[domain=2.6993646:4.5,smooth,variable=\x] plot ({\x},{1/(1*(\x-2.3))+1.65/(\x+1)+0.2});
  \draw[domain=0:4.5,smooth,variable=\x] plot ({\x},{1.65/(\x+1)+0.2});
  \node at (2.3,-0.3) {\small $T_*$};  
  \fill (2.3,0.7) circle (2pt);
  \fill (0,1.85) circle (2pt);
  \draw[thick,domain=0:2.3,smooth,variable=\x] plot ({\x},{1.65/(\x+1)+0.2});
  \node at (0,-0.3) {\small $0$};
  \node at (2.3,-0.3) {\small $T_*$};
  \node at (1.1,.56) {\small $U$};
  \node at (3.4,.335) {\footnotesize $u^{(0)}$};
  \node at (3.8,1.9) {\footnotesize $u^{(1)}$};
\end{tikzpicture}
\caption{\begin{tabular}[t]{ @{} l @{} }Continuous family of solutions $u^{(\sigma)}$\\ with the same initial data.\end{tabular}}
\label{fig:Nonuniqueness}
\end{figure}
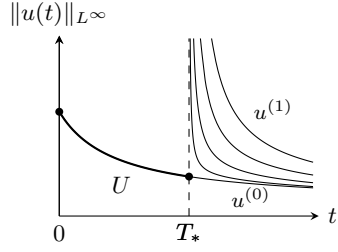

In Theorem~\ref{thm-non-unique} below, we let $U$ be any classical solution of the Navier--Stokes equations on $\mathbb T^d\times [0,T]$ for some $d\geq 2$ and $T\in(0,1]$. We will interchangeably consider $U$ as either a vector field on $\mathbb T^d$ or as a periodic solution on all of $\mathbb R^d$. 

\begin{theorem}\label{thm-non-unique}
     For any $T_*\in[0,T)$, there exists a family $(u^{(\sigma)})_{\sigma\in[0,1]}$ of weak solutions of \eqref{eq:NSE} on $\mathbb R^d\times[0,T]$ which connects the classical solution $u^{(0)}\equiv U$ to a continuum of distinct solutions,
     \[u^{(\sigma)}\big|_{[0,T_*]}\equiv U\big|_{[0,T_*]},\qquad u^{(\sigma)}\big|_{(T_*,T]}\not\equiv U\big|_{(T_*,T]},\qquad\forall \sigma\in(0,1].\]
     Additionally, $u^{(\sigma)}$ possess the regularity properties
    \[u^{(\sigma)}(t)\in C^\infty(\mathbb R^d),\qquad\forall t\in[0,T],\]
    and, for all $p<\infty$, we have
    \[
    u^{(\sigma)}\in L_t^{2,\infty}L_x^\infty\cap L_{t}^2L_{x,\mathrm{loc}}^p.
    \]
    For $d\geq3$, we additionally have the critical control
    \begin{align*}
        u^{(\sigma)}\in L_t^\infty \dot W^{-1,\infty}_x.
    \end{align*}
    Along a discrete sequence $\sigma_n\to0+$, the solutions $u^{(\sigma_n)}$ are $2\pi$-periodic and converge in weak-* $BMO^{-1}(\mathbb T^d)$ to $U$.
\end{theorem}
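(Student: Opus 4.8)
The plan is to reduce Theorem~\ref{thm-non-unique} to Theorem~\ref{main-thm} (or rather to the construction underlying it), using the scaling and Galilean-type symmetries of \eqref{eq:NSE} to generate the continuum of solutions. First I would recall that the mechanism behind Theorem~\ref{main-thm} is an instantaneous inverse energy cascade: at $t=T_*+$ one injects a perturbation concentrated at frequency $N_k$ and lets it cascade down to low frequencies, with the size of the injected perturbation at scale $N_k$ chosen so that the resulting $L^\infty$ norm behaves like $c/\sqrt{t-T_*}$. The key point is that there is a free parameter in this construction — call it $\sigma$ — measuring the \emph{amplitude} of the injected cascade (equivalently, the constant $c$ in Property~\ref{blow_up_theorem_classical_2_with_lower_bound}); setting $\sigma=0$ injects nothing and returns the background classical solution $U$, while $\sigma\in(0,1]$ produces a genuinely different smooth-in-space solution agreeing with $U$ on $[0,T_*]$. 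Continuity of the construction in $\sigma$ (in particular continuity at $\sigma=0$, where the perturbation vanishes in every norm controlled by the estimates of Theorem~\ref{main-thm}) gives the family $(u^{(\sigma)})_{\sigma\in[0,1]}$ with $u^{(0)}\equiv U$ and $u^{(\sigma)}|_{(T_*,T]}\not\equiv U|_{(T_*,T]}$ for $\sigma>0$.

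Next I would address the two structural features specific to Theorem~\ref{thm-non-unique}: that the solutions live on $\mathbb R^d$ and are generically only quasi-periodic, with a discrete subfamily genuinely $2\pi$-periodic. The natural source of the quasi-periodicity is that the cascade building blocks (high-frequency Beltrami-type flows, or whatever Fourier-localized profiles the main construction uses) can be placed at wavevectors that are irrational multiples of the base lattice; for a discrete set of amplitudes $\sigma_n\to0+$ one can instead choose the building blocks on the lattice $\mathbb Z^d$ itself, preserving $2\pi$-periodicity. The translation symmetry-breaking is then simply the statement that for irrational placement the solution is not invariant under $2\pi\mathbb Z^d$ translations even though it is quasi-periodic (hence almost periodic, so still a bounded smooth function on $\mathbb R^d$ with all the stated $L^{2,\infty}_tL^\infty_x\cap L^2_tL^p_{x,\mathrm{loc}}$ bounds, since those are local/translation-invariant). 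The regularity claims $u^{(\sigma)}(t)\in C^\infty(\mathbb R^d)$ for every $t$, and $u\in L^\infty_t\dot W^{-1,\infty}_x$ when $d\geq3$, transfer verbatim from the corresponding statements in Theorem~\ref{main-thm}, since the construction on $\mathbb R^d$ is literally the periodic (or quasi-periodic) extension of the torus construction.

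Finally, for the convergence statement, along the discrete periodic subfamily $u^{(\sigma_n)}$ the solutions are $2\pi$-periodic with amplitude parameter $\sigma_n\to0$, so the injected cascade has vanishing amplitude; the uniform Type~I bound $\|u^{(\sigma_n)}(t)\|_{L^\infty}\le C_n/\sqrt{t-T_*}$ with $C_n\to0$ on $(T_*,T]$, together with $u^{(\sigma_n)}\equiv U$ on $[0,T_*]$, forces $u^{(\sigma_n)}\to U$ in, e.g., $L^2_tL^p_x$, and hence (after checking the pressures and time-regularity) weak-$*$ in $BMO^{-1}(\mathbb T^d)$ at each time, using Property~\ref{blow_up_theorem_continuity}. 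The main obstacle I anticipate is not any of these transfer arguments but rather verifying that the core cascade construction genuinely depends continuously (indeed, in a sense, real-analytically or at least Lipschitz-continuously) on the amplitude parameter $\sigma$ all the way down to $\sigma=0$ — i.e. that one is not merely rescaling a single nontrivial solution but has a bona fide one-parameter family limiting onto $U$ — and that the quasi-periodic placement of the high-frequency building blocks does not destroy the delicate frequency-cascade estimates (the interaction/commutator bounds must be uniform in the irrational shift). That uniformity, together with ensuring the $\sigma_n$-periodic members really are distinct from $U$ for every $n$ (not accidentally trivial), is where the real work lies; everything else is bookkeeping inherited from Theorem~\ref{main-thm}.
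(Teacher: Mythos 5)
Your proposal misidentifies the mechanism that generates the one-parameter family, and the error is not cosmetic: the convergence argument you build on it would fail. In the paper the parameter $\sigma$ is \emph{not} an amplitude of the injected cascade; it is a frequency scale. One constructs a single solution $u=U+v^{N_0}+w^{N_0}$, where $v^{N_0}(x,t)=N_0v(N_0x,N_0^2t)$ is the critically rescaled principal part and $w$ is produced by the fixed-point argument of Proposition~\ref{w-exists-fixed-point-proposition}, and then sets $u^{(\sigma)}=u|_{N_0=C/\sigma}$ for $\sigma\in(0,1]$, $u^{(0)}=U$. There is no free amplitude parameter available: the size of the cascade components is rigidly fixed by the quadratic transfer identity \eqref{a-convex-integration-identity}, which requires the self-interaction of level $k+1$ to reproduce level $k$ exactly at critical amplitude; multiplying the perturbation by a small factor $\sigma$ destroys the ansatz, since the quadratic term then scales like $\sigma^2$ against a linear target of size $\sigma$. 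Consequently your claim that the Type~I constants $C_n$ tend to $0$ and hence $u^{(\sigma_n)}\to U$ strongly in $L^2_tL^p_x$ is not what happens (and is not claimed): the perturbations $v^{C/\sigma_n}+w^{C/\sigma_n}$ have $\sigma$-independent critical size (e.g.\ $\sup_t\sqrt t\,\|\cdot\|_{L^\infty}$ of order one, by scale invariance), so the family does \emph{not} converge to $U$ in any strong subcritical norm. The actual convergence is only weak-$*$ in $BMO^{-1}$, and it comes from the frequency support escaping to infinity: $\widehat{v^{C/\sigma_n}}+\widehat{w^{C/\sigma_n}}$ is supported in $\{|\xi|>C/\sigma_n\}$, so pairing against a fixed test function in the predual gives $o(1)$ because the high-frequency piece of the test function vanishes, while the $BMO^{-1}$ bound is uniform by criticality and the Koch--Tataru estimate.

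The same misidentification infects your treatment of periodicity. In the paper the perturbation is $2\pi/N_0$-periodic while $U$ stays $2\pi$-periodic, so $u^{(\sigma)}$ is quasi-periodic for generic $\sigma$ and genuinely $2\pi$-periodic exactly when $N_0=C/\sigma\in\mathbb Z$; this is what singles out the discrete sequence $\sigma_n$ and ties it to the same parameter that drives the weak-$*$ convergence. In your version, where $\sigma$ is an amplitude, there is no reason a discrete set of \emph{amplitudes} should correspond to lattice-compatible placements of the building blocks, so the statement about the sequence $\sigma_n$ is left unsupported. What you do get right is the overall architecture (one construction from Theorem~\ref{main-thm}, transferred to $\mathbb R^d$ by periodic/quasi-periodic extension, with the regularity, $L_t^{2,\infty}L_x^\infty\cap L_t^2L^p_{x,\mathrm{loc}}$ and $\dot W^{-1,\infty}$ bounds inherited), but the core of the theorem --- why there is a continuum of distinct solutions limiting onto $U$, in which topology, and why only a discrete subfamily is periodic --- requires the scaling-parameter mechanism, not an amplitude dial.
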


\begin{remark}
    The family of solutions exhibits spontaneous breaking of the periodicity. Indeed, the solutions on $[0,T_*]$ agree with $U$ and are therefore $2\pi$-periodic. At $t=T_*$, the energy entering at infinite wavenumber is localized at a lattice of frequencies that may be an irrational multiple of $2\pi$. In that case, the solutions $u^{(\sigma)}$ have features of being quasi-periodic on $\mathbb R^d$ in the sense that they are periodic to leading order with a small, low frequency aperiodic perturbation coming from $U$. As $\sigma\to0+$, the period of the ``non-unique'' part of the solution goes to zero, and the solutions homogenize to $U$. The distinguished set $\{\sigma_n\}$ are those for which the period of the anomalous part of the solution is an integer. More generally, for the dense countable subset of $\sigma\in[0,1]$ for which the period is rational, the solution is periodic but with a potentially smaller period than $2\pi$.
    
    It is very likely that the proof can be adapted so that the solutions instead decay on $\mathbb R^d$, or remain periodic and well-defined on $\mathbb T^d$, but we do not pursue these extensions here.
\end{remark}

\begin{remark}
    More strongly, the proof will show that $u^{(\sigma)}$ is in the Koch--Tataru path space $X_{[T_*,T]}$, and therefore weak-* continuous in time with values in $BMO^{-1}$. In particular, the solutions are in $L_t^\infty BMO_x^{-1}(\mathbb R^d)$ for all $\sigma\in[0,1]$.
\end{remark}

\begin{remark}
    We make the following technical observation: because the $u^{(\sigma)}$ are in $L_{t,x}^\infty$ on $\mathbb R^d\times[T',T]$ for any $T'>T_*$, there is no difficulty formulating the notions of weak and classical solutions. Indeed, in the framework of bounded mild solutions, the standard local theory carries over from $\mathbb T^d$ or decaying solutions on $\mathbb R^d$; see for instance~\cite{MR2545826}.
\end{remark}

\noindent\textbf{Structure of the paper.}
We further explain the significance of the main results and highlight the main ideas of the construction in Section \ref{sec:significance}.
Section \ref{sec:prelim} introduces the notations to be used in the paper. Sections \ref{sec:principal}, \ref{sec:est} and \ref{sec:corrector} contain the technical construction and estimates. Section \ref{sec:proof} is devoted to the proof of the main results. Auxiliary lemmas are collected in the Appendix.

\medskip

\noindent\textbf{Acknowledgments.}
M.D. is partially supported by the National Science Foundation grant DMS-2308208 and the Simons Foundation. S.P. acknowledges support from the National Science Foundation under Grant No.\ DMS-2424441.

\section{Significance of the results and main ideas}
\label{sec:significance}
In this section, we expound on the significance of the blow-up constructed in Theorem~\ref{main-thm} and non-uniqueness established in Theorem~\ref{thm-non-unique}, and their relevance to the broader question of well-posedness of the Navier--Stokes equations.

\subsection{Finite-time vs.\ instantaneous blow-up}\label{s:blow-up_left_vs_right}

 Both types of smooth blow-up as described in Section \ref{sec:def-blowup} require a cascade of energy or some higher regularity norm across infinitely many frequency scales. They are differentiated by the direction of the cascade: whether the energy is transferred from low- to high-frequency modes or vice versa characterizes the blow-up type as finite-time or instantaneous, respectively. While finite-time blow-up has received far more attention, both phenomena are relevant for understanding the global behavior of Navier--Stokes solutions, and can conceivably be exhibited by Leray--Hopf solutions.

Indeed, suppose a classical solution $u$ exhibits finite-time blow-up at some $T_*>0$. Then there exists at least one global continuation of $u$ in the class of Leray--Hopf solutions. At $t=T_*$, there are two cases: the solution either loses regularity (i.e., $u(T_*)\notin C^\infty(\mathbb T^d)$), or becomes spatially smooth due to the high-frequency components of the solution escaping to infinite wavenumber. In the latter case where $u(T_*) \in C^\infty$, a second smooth blow-up might occur, even while $u$ remains in the Leray--Hopf class. This second blow-up is an ``instantaneous blow-up'' in the sense described above. The three alternatives are depicted in Figure~\ref{fig:blowup-scenarios}.

\begin{figure}[ht]
\centering
\hspace{-2em}
\begin{subfigure}[b]{0.32\linewidth}
\begin{tikzpicture}[scale=0.75,>=stealth]
%%% (a) Two-sided blow-up, no dot
%\begin{scope}[shift={(0,0)}]
  \draw[->] (0,0) -- (4.6,0) node[right] {\small $t$};
  \draw[->] (0,0) -- (0,3.2) node[above] {\small $\|u(t)\|_{L^\infty}$};
  \draw[dashed] (2.3,0) -- (2.3,3.15);
  \draw[thick,domain=0:2.2,smooth,variable=\x] plot ({\x},{1/(3.2*(2.3-\x))});
  \draw[thick,domain=2.4:4.5,smooth,variable=\x] plot ({\x},{1/(3.2*(\x-2.3))});
  \node at (2.3,-0.3) {\small $T_*$};
%\end{scope}
\end{tikzpicture}
\caption{\begin{tabular}[t]{ @{} l @{} } Formation of a\\ spatial singularity.\end{tabular}}
\end{subfigure}
\begin{subfigure}[b]{0.32\linewidth}
\begin{tikzpicture}[scale=0.75,>=stealth]
%\begin{scope}[shift={(6.2,0)}]
  \draw[->] (0,0) -- (4.6,0) node[right] {\small $t$};
  \draw[->] (0,0) -- (0,3.2) node[above] {\small $\|u(t)\|_{L^\infty}$};
  \draw[->] (0,0) -- (0,3.2);
  \draw[dashed] (2.3,0) -- (2.3,3.15);
  \draw[thick,domain=0:2.2,smooth,variable=\x] plot ({\x},{1/(3.2*(2.3-\x))});
  \draw[thick,domain=2.4:4.5,smooth,variable=\x] plot ({\x},{1/(3.2*(\x-2.3))});
  \fill (2.3,0.7) circle (2pt);
  \node at (2.3,-0.3) {\small $T_*$};
%\end{scope}
\end{tikzpicture}
\caption{\begin{tabular}[t]{ @{} l @{} }No spatial singularity;\\ blow-up from the right.\end{tabular}}
\end{subfigure}
\begin{subfigure}[b]{0.32\linewidth}
\begin{tikzpicture}[scale=0.75,>=stealth]
  \draw[->] (0,0) -- (4.6,0) node[right] {\small $t$};
  \draw[->] (0,0) -- (0,3.2) node[above] {\small $\|u(t)\|_{L^\infty}$};
  \draw[->] (0,0) -- (0,3.2);
  \draw[dashed] (2.3,0) -- (2.3,3.15);
  \draw[thick,domain=0:2.2,smooth,variable=\x] plot ({\x},{1/(3.2*(2.3-\x))});
  \fill (2.3,0.7) circle (2pt);
  \draw[thick,domain=2.3:4.5,smooth,variable=\x] plot ({\x},{0.91/(\x-1)});
  \node at (2.3,-0.3) {\small $T_*$};
\end{tikzpicture}
\caption{\begin{tabular}[t]{ @{} l @{} }No spatial singularity;\\ no blow-up from the right.\end{tabular}}
\end{subfigure}
\caption{Schematic depiction of possible scenarios following a finite-time blow-up:
(A) Formation of a spatial singularity at $t=T_*$; 
(B) Absence of a spatial singularity at $t=T_*$ with instantaneous blow-up from the right;
(C) Absence of spatial singularity at $t=T_*$ with a classical solution starting from $t=T_*$.}
\label{fig:blowup-scenarios}
\end{figure}
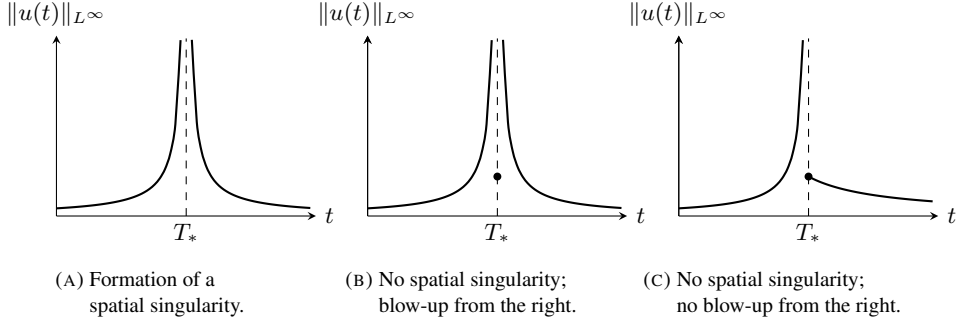

The scenario (B) in Figure~\ref{fig:blowup-scenarios}, in which both finite-time and instantaneous blow-up occur at $T_*$, is consistent with the Leray--Hopf energy inequality as long as the energy anomalously lost in the first blow-up exceeds the energy gained in the second. Even more strongly\footnote{It is presently unknown whether every Leray--Hopf solution is a restricted Leray--Hopf solution.}, it is consistent with the known behavior of a \emph{restricted} Leray--Hopf solution, i.e., the weak limit of a sequence of solutions of Galerkin-truncated Navier--Stokes, as it undergoes a potential blow-up. This highlights the fact that an instantaneous blow-up can occur in the Leray--Hopf class only \emph{after} the appearance of a finite-time blow-up. In contrast, for general weak solutions, an instantaneous blow-up can occur from an arbitrary classical solution, as demonstrated in Theorem \ref{main-thm}. 
While there are numerous regularity conditions precluding a finite-time blow-up, the analogous problem for instantaneous blow-up does not appear to have been considered. 
%In Section~\ref{sec:regularity}, we report some results to the effect that classical regularity criteria for finite-time blow-up can be extended to the instantaneous scenario. 

We emphasize that instantaneous blow-up is a fundamentally nonlinear phenomenon. For instance, suppose $u=e^{(t-T_*)\Delta}\phi$ solves the heat equation on $\mathbb T^d$ from initial data $u|_{t=T_*} =\phi\in \dot B^{-1}_{\infty,\infty}$. While $u$ does admit the Type~I bound
\[
\sqrt{t-T_*}\|u(t)\|_{L^\infty}\lesssim \|\phi\|_{\dot B^{-1}_{\infty,\infty}},
\]
if $\limsup_{t\to T_*+}\|u(t)\|_{L^\infty}=\infty$, then necessarily  the initial data is singular. An instantaneous blow-up arising from smooth initial data can occur only through a nonlinear mechanism that transfers energy across infinitely many frequency modes. 

When discussing various mechanisms of breakdown of classical solutions, we are careful to distinguish \emph{blow-up}, corresponding to unboundedness of $\|u(t)\|_{L^\infty}$ (e.g., \eqref{eq:finite_time_blow_up_definition}--\eqref{eq:instantaneous_blow_up_definition}) and \emph{spatial singularity formation}, corresponding to a loss of smoothness in the variable $x$ at a fixed time. The former refers to the classical scenario envisioned by Leray, where a smooth solution ceases to exist as $t \to T_*-$. If such a blow-up indeed occurs, the weak continuation of the solution may or may not develop a spatial singularity at $t=T_*$; see Figure~\ref{fig:blowup-scenarios}. Formation of a singularity in space, therefore, does not necessarily follow from a blow-up of a subcritical norm such as $\|u(t)\|_{L^\infty}$. However, for weakly continuous solutions, a formation of singularity in space \emph{requires} a blow-up from the left as in \eqref{eq:finite_time_blow_up_definition}.

Finally, the instantaneous blow-up and non-uniqueness constructed in this paper involve a purely \emph{temporal} loss of smoothness, where the solutions may evolve from any smooth initial data, while staying analytic in space for all $t>0$ and in time for all $t\neq T_*$. It therefore represents a fundamentally different manifestation of non-uniqueness---one that occurs entirely within the class of smooth, divergence-free vector fields (see Section \ref{subsec-nonunique} for further discussion.)

\subsection{Criticality and Type~I blow-up}
\label{subsec-type-I}

When posed on $\mathbb R^d$, the Navier--Stokes equations possess a scaling symmetry under the transformation
\[
u_\lambda(t,x)=\lambda\,u(\lambda x, \lambda^2t),
\qquad
p_\lambda(t,x)=\lambda^2 p(\lambda x, \lambda^2t),
\]
which leaves the equations invariant for any $\lambda>0$.  
A function space whose norm is invariant under this scaling is called \emph{critical}. For instance, $L_t^\infty X$ is critical in dimension $d\geq2$ if $X$ belongs to the following sequence of critical spaces:
\begin{equation}\label{eq:critical_spaces}\dot H^{\frac d2-1}\subset L^d\subset L^{d,\infty}\subset \dot B^{-1+\frac{d}{p}}_{p,\infty}\subset BMO^{-1} \subset \dot B^{-1}_{\infty,\infty},\end{equation}
for any $d<p<\infty$. Another important family of critical spaces is given by the Ladyzhenskaya--Prodi--Serrin scale, namely $L^p_tL^q_x$ where $q>d$ and $\frac2p + \frac{d}{q}=1$, with scale-invariance enforced by the latter condition.

A basic consequence of Leray's local theory is that $\|u(t)\|_{L^\infty}$ diverges at least as fast as $(T_*-t)^{-\frac12}$ before a finite-time blow-up at $t=T_*$. A blow-up that saturates this critical lower bound is referred to as \emph{Type~I}. More precisely, a classical solution is said to exhibit a \emph{finite-time Type~I blow-up} at $t=T_*$ if it obeys both \eqref{eq:finite_time_blow_up_definition} and the scale-invariant bound
\[
\|u(t)\|_{L^\infty}\leq \frac{C}{\sqrt{T_*-t}}, \quad t<T_*,
\]
for some constant $C>0$. Analogously, an \emph{instantaneous Type~I blow-up} occurs at $t=T_*$ if, in addition to \eqref{eq:instantaneous_blow_up_definition} and $u(T_*)\in C^\infty$, the solution satisfies
\[
\|u(t)\|_{L^\infty}\leq \frac{C}{\sqrt{t-T_*}}, \quad t >T_*,
\]
for some $C>0$. Observe that, near an isolated blow-up time, the Type~I bounds are essentially equivalent to the inclusion $u\in L_t^{2,\infty}L_x^\infty$, which should be compared to the Ladyzhenskaya--Prodi--Serrin space $L_t^2L_x^\infty$. The Type~I rate corresponds to the critical scaling of the heat kernel and marks
the threshold where the parabolic regularization is in balance with potential small scale creation by the nonlinearity.

The smooth instantaneous blow-up examples we construct toward Theorem~\ref{main-thm} are indeed of Type~I (Property~\ref{blow_up_theorem_type_i} in the theorem), and the bound is sharp near $T_*$ in the sense that
\begin{equation}\label{eq:type_I_lower_bound}
\liminf_{t\to T_*+}\,\sqrt{t-T_*}\|u(t)\|_{L^\infty} \geq c,
\end{equation}
for some $c>0$ by Property~\ref{blow_up_theorem_classical_2_with_lower_bound}. Let us also point out that while there is not a single agreed upon definition of Type~I blow-up, the definition given above appears to be the traditional one, first appearing in the context of the Ricci flow~\cite{MR1375255} and later for other types of parabolic and dispersive equations (\cites{MR2077706,MR2545826,MR3272053}, etc.). In the recent fluids literature, it is common to refer to any bound obeyed by self-similar solutions as Type~I, particularly those of the form $u\in L_t^\infty X$ where $X$ is in $L^{d,\infty}$ or any of the spaces in which it is embedded in \eqref{eq:critical_spaces}. Under this definition, the instantaneous blow-up constructed here is still of Type~I, in that the solution is in $L_t^\infty\dot W_x^{-1,\infty}$ or $L^\infty_t BMO^{-1}_x$.

For comparison, the existence of \emph{finite-time} Type~I blow-up for the Navier--Stokes equations with $d\geq3$ is a major open question, although it has been ruled out in special cases including backward self-similar solutions~\cites{MR1397564,MR1643650}, axisymmetric solutions~\cites{MR2545826,MR2429247,MR2512859,MR4628936}, and when the critical norm is strengthened to, say, $L^d$~\cite{Escauriaza}.

\subsection{Previous non-uniqueness constructions}
\label{subsec-nonunique}
For finite-energy initial data, Leray’s construction yields
the existence of weak solutions satisfying the energy inequality (Leray--Hopf solutions), and possessing additional desirable properties, such as Leray's structure theorem and the weak--strong uniqueness. In the classical setting, uniqueness holds under certain regularity or integrability conditions. Most notably, strong solutions, that is, solutions satisfying the Ladyzhenskaya--Prodi--Serrin condition
\[
u \in L^p_t L^q_x, \qquad \frac{2}{p}+\frac{d}{q}=1, \quad q>d,
\]
are unique in the class of Leray--Hopf solutions. In these critical regimes, the nonlinearity $u\cdot\nabla u$ is controlled by the linear smoothing effect of $\Delta u$, ensuring well-posedness. In the absence of such conditions, the situation becomes far more delicate. Whether weak solutions can fail to be unique has long been regarded as a major open problem.

\subsubsection{Convex integration}
\label{subsub-convex-integration}

The Euler equations, by which we mean \eqref{eq:NSE} with $\nu=0$, offer insight into the uniqueness problem. Following the pioneering works of Scheffer~\cite{Scheffer1993} and Shnirelman~\cites{Shnirelman1997, Shnirelman2000}, the convex integration framework brought to mathematical fluid mechanics by De~Lellis and Székelyhidi~\cites{DeLellisSzekelyhidi2009,DeLellisSzekelyhidi2013} provided a new powerful tool for constructing weak solutions of PDEs with various pathological properties motivated by physical theories of turbulence. This program led to Isett’s construction of non-conservative $C_{x,t}^{1/3-}$ solutions for the Euler equations in 3D \cite{Isett2018}, later extended to 2D by Giri and Radu in \cite{MR4809443}. Together with the proof of energy conservation above the $\frac13$-H\"older threshold~\cite{MR1298949}, these works are regarded to have resolved Onsager’s conjecture on anomalous energy dissipation.

These developments exposed a profound breakdown of uniqueness and the classical notion of determinism in fluid mechanics. Still, the weak solutions of the Euler equations arising from convex integration are not known to be vanishing-viscosity limits of Navier--Stokes solutions in any reasonable regularity class, raising the question of which ``wild'' phenomena can be transferred to the viscous setting. Indeed, the Laplacian term in~\eqref{eq:NSE} provides a smoothing mechanism that might have the potential to preclude the wild oscillatory behavior observed in the Euler case. A breakthrough in this direction came in the work of Buckmaster and Vicol~\cite{BuckmasterVicol2019}, who introduced a viscous adaptation of convex integration and constructed the first examples of non-unique weak solutions of~\eqref{eq:NSE}. Their solutions lie in $H^\epsilon(\mathbb T^3)$ for some $\epsilon>0$ and exhibit strong non-uniqueness and non-conservation of the $L^2$ norm. Subsequent constructions of wild solutions with improvements to the regularity were given, for instance, in \cites{MR4422213,2009.06596,MR4610908}. These results demonstrate that viscosity does not, in general, restore uniqueness. The recent constructions in \cites{CZZ2025,CoiculescuPalasek2025} push this point further, as the Laplacian term is essential to the emergence of non-uniqueness, and the constructions fail in the inviscid case.

\subsubsection{Self-similar solutions}
\label{subsub-self-similar}
A different mechanism for non-uniqueness of Navier--Stokes solutions was envisioned by Jia and \v{S}ver\'ak in~\cites{MR3179576,MR3341963}. Their program concerns forward self-similar solutions of \eqref{eq:NSE} which evolve from $(-1)$-homogeneous initial data. Such solutions are necessarily singular at the origin but lie in certain sharp regularity classes. More explicitly, one searches for solutions on $\mathbb R^d\times(T_*,\infty)$ of the form
\[
u(x,t)=\frac{1}{\sqrt{t-T_*}}\,U\!\left(\frac{x}{\sqrt{t-T_*}}\right),\qquad
p(x,t)=\frac{1}{t-T_*}\,P\!\left(\frac{x}{\sqrt{t-T_*}}\right),
\]
where $(U,P)$ satisfies the stationary Leray profile equation
\begin{equation}\label{eq:Leray}
-\Delta U-\tfrac12 x\cdot\nabla U-\tfrac12 U+U\cdot\nabla U+\nabla P=0,
\qquad \div U=0.
\end{equation}
If there exist two distinct profiles $U_1,U_2$ with the same far-field asymptotics,
then the corresponding self-similar flows yield two different smooth solutions of the Navier--Stokes equations arising from the same $(-1)$-homogeneous initial data at $t=T_*$. Similarly, if there exists a profile $\overline U$ which is nonlinearly unstable in the time-dependent version of \eqref{eq:Leray}, then the unstable manifold is a family of solutions converging to $\overline U$ at the initial time $T_*$. This program was carried out for the forced axisymmetric Navier--Stokes equations without swirl by Albritton, Bru\'e, and Colombo~\cite{abc}, following construction of an unstable vortex for forced 2D Euler by Vishik~\cites{vishik1,vishik2}. For the unforced Navier--Stokes equations, unstable solutions of \eqref{eq:Leray} were discovered numerically by Guillod and \v Sver\'ak~\cite{MR4593274}, and very recently a computer-assisted proof was announced by Hou, Wang, and Yang \cite{hou2025nonuniqueness}.

However, this self-similar mechanism is fundamentally different from the blow-up from the right constructed in the present work. In the self-similar case, the singularity occurs through a fixed scaling profile evolving from singular initial data. In contrast, our blow-up from the right arises from \emph{smooth} initial data, yet produces a Type~I divergence of the $L^\infty$ norm immediately after time $T_*$. Unlike the self-similar case, no singular structure is imposed at time $T_*$; the blow-up is generated dynamically by an \emph{inverse energy cascade} that transfers energy from infinitely high frequencies.

In physical terms, both mechanisms rely on an \emph{inverse energy cascade}, but in the self-similar case, specifically designed singular initial data supplies all the energy for the transfer to low scales. In our case, the initial data can be zero, so all the energy comes from the infinite wavenumber, not the initial data.

\subsubsection{Non-uniqueness based on dyadic scenarios}
\label{subsub-dyadic}

In \cite{CoiculescuPalasek2025}, Coiculescu and Palasek introduced another mechanism for non-uniqueness, based on the construction in \cite{palasek2024non} for the Obukhov dyadic model. The initial data lies at the critical regularity as it does for self-similar solutions, but is not precisely $-1$-homogeneous. This scenario is fundamentally different from the two above, as there is no energy cascade. Instead, it is based on an asymmetric ``self-distraction'' where a binary decision is made at $t=0$ at infinite wavenumber, and that information is propagated to low frequencies. The singular initial data is designed so that either the even frequency modes of the initial data annihilate the odd ones, or vice versa, resulting in two distinct solutions. Compare also to the recent construction~\cite{palasek2025arbitrary} of norm growth from bounded data which involves a finite inverse cascade, but with energy supplied by the large initial velocity rather than appearing anomalously from infinite frequencies.

\subsection{Sharpness of the non-uniqueness theorem}
\label{sec-sharp-nonunique}
To show that the constructed non-unique solution $u(t)$ in Theorem~\ref{thm-non-unique} lies on the borderline of known uniqueness criteria, we recall several classical results.

A weak solution is defined in Section~\ref{sec:weak_solutions} as a vector field $u \in L^2 (\mathbb{T}^d \times [0,T])$ satisfying the Navier--Stokes equations in a suitable averaged sense. In \cite{MR316915}, Fabes, Jones, and Rivi\`ere proved that weak solutions also satisfy the following integral equation, known as the mild formulation,
\[
u = e^{t\Delta}u_0 - \int_0^t e^{(t-s)\Delta} \mathbb{P}\div(u\otimes u) (s)\, ds,
\]
which can be formally obtained from \eqref{eq:NSE} using Duhamel's principle. 

The first unconditional weak--strong uniqueness theorem was also obtained in \cite{MR316915}, where it was proved that weak solutions are unique under the Ladyzhenskaya--Prodi--Serrin condition away from the endpoints, i.e., in the class $ L^p_tL^q_x$ with $ \frac{2}{p} + \frac{d}{q} \leq 1$ and $d < q  <\infty  $. Later, uniqueness at the endpoint $C_tL^d$ was established in \cite{MR1813331} for $d  = 3$, and subsequently extended in \cites{MR1724946,MR1680809,MR1876415} to various spatial domains using different approaches. Recently, in \cite{2009.06596} it was shown that the method of \cite{MR1876415} can be extended to cover the whole Ladyzhenskaya--Prodi--Serrin line including the other endpoint $L^2_tL^\infty_x$:

\begin{theorem}
Let $U$ be a classical solution of \eqref{eq:NSE} on $\mathbb T^d\times [T_*,T]$, $d\geq 2$. If $u$ is a weak solution of \eqref{eq:NSE} with $u(T_*)=U(T_*)$ such that 
\[
\displaystyle u\in L_t^{2}L_x^\infty([T_*,T]\times\mathbb T^d),
\]
then $u\equiv U$ for all $t\in [T_*,T]$.
\end{theorem}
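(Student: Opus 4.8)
The natural approach is the relative energy (weak--strong) estimate applied to the difference $w:=u-U$. This vector field vanishes at $t=0$ and solves, in the sense of distributions,
\[
\partial_t w - \Delta w + \div(w\otimes u) + \div(U\otimes w) + \nabla(p-P) = 0,\qquad \div w = 0,
\]
where $(p,P)$ are the pressures of $(u,U)$ and we used $u\otimes u-U\otimes U=w\otimes u+U\otimes w$. The first task is to license an energy identity for $w$. A generic weak solution need not obey the energy equality, but the hypothesis $u\in L^2_tL^\infty_x$ together with the finite-energy bounds $u\in L^\infty_tL^2_x\cap L^2_t\dot H^1_x$ interpolates (via $\|u\|_{L^4_x}^4\le\|u\|_{L^\infty_x}^2\|u\|_{L^2_x}^2$) to $u\in L^4([0,T]\times\mathbb T^d)$, which sits exactly on Shinbrot's line $\tfrac2p+\tfrac2q=1$, $q\ge4$, hence guarantees the energy equality for $u$; the classical solution $U$ satisfies it as well. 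Testing the $u$-equation against $U$ and the $U$-equation against $u$ --- permissible since $u\otimes u\in L^2_tL^2_x$ and $\nabla U\in L^\infty_{t,x}$ --- and combining with the two energy equalities produces the standard relative energy identity
\[
\tfrac12\|w(t)\|_{L^2}^2 + \int_0^t\|\nabla w(s)\|_{L^2}^2\,ds = -\int_0^t\!\int_{\mathbb T^d}\big((w\cdot\nabla U)\cdot w\big)\,dx\,ds,
\]
with all self-interaction trilinear terms cancelling by incompressibility (the term $\int((U\cdot\nabla)w)\cdot w$ vanishes since $\div U=0$, and $\int((w\cdot\nabla)w)\cdot w=0$).

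Closing the estimate is then immediate. Since $U$ is classical on the compact set $[0,T]\times\mathbb T^d$, the quantity $M:=\|\nabla U\|_{L^\infty([0,T]\times\mathbb T^d)}$ is finite, so the right-hand side above is at most $M\int_0^t\|w(s)\|_{L^2}^2\,ds$. Discarding the nonnegative dissipation term and invoking Grönwall's inequality with $\|w(0)\|_{L^2}=0$ forces $w\equiv0$, i.e.\ $u\equiv U$ on $[0,T]$. No smallness is needed, the argument is insensitive to the dimension $d\ge2$, and the torus and whole-space settings are handled identically; the role of the hypothesis $u\in L^2_tL^\infty_x$ is solely to promote $u$ into a class in which the relative energy identity is valid.

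The step I expect to be the genuine obstacle is exactly this last point: justifying the energy equality --- equivalently, the relative energy identity --- for a solution lying only at the endpoint exponent pair $(p,q)=(2,\infty)$. This endpoint of the Ladyzhenskaya--Prodi--Serrin scale resisted the classical Serrin-type arguments and was settled only recently in~\cite{2009.06596}, extending the method of~\cite{MR1876415}. If one declines to assume any a priori energy structure on $u$ (so the Shinbrot route is unavailable), the alternative is to upgrade the weak solution to a mild solution through Duhamel's formula --- legitimate because $u\otimes u\in L^1_{t,x}$ --- and argue uniqueness directly in the path space $L^2_tL^\infty_x$. The difficulty there is that the Oseen-kernel bound $\|e^{(t-s)\Delta}\mathbb P\,\div F(s)\|_{L^\infty}\lesssim(t-s)^{-1/2}\|F(s)\|_{L^\infty}$ only yields a \emph{weak}-type $L^{2,\infty}_t$ estimate after convolution in time, so the naive contraction does not close; one instead splits $u=u_{\mathrm{s}}+u_{\mathrm{r}}$ with $\|u_{\mathrm{s}}\|_{L^2_tL^\infty_x}$ small and $u_{\mathrm{r}}$ bounded (indeed smooth for positive times), absorbs the former into the dissipation on short time intervals, treats the latter exactly as in the subcritical Serrin case, and iterates across $[0,T]$. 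Once this delicate endpoint issue is dispatched by either route, the conclusion $u\equiv U$ follows as in the preceding paragraph.
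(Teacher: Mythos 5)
The paper does not prove this theorem itself; it is quoted from the literature, with the attribution that~\cite{2009.06596} extends the method of~\cite{MR1876415} from the open Ladyzhenskaya--Prodi--Serrin range to the endpoint $L^2_tL^\infty_x$. So what you should be checking your proposal against is that lineage, not an in-text argument.

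Your primary (Shinbrot/relative-energy) route has a genuine gap: the theorem's hypotheses give only $u\in L^2(\mathbb T^d\times[0,T])\cap L^2_tL^\infty_x$; the paper's Definition~\ref{def:weak_solutions} does \emph{not} impose $u\in L^\infty_tL^2_x\cap L^2_t\dot H^1_x$, and these a priori bounds cannot be extracted from the hypotheses (for instance $u(x,t)=(T-t)^{-1/4}$, constant in $x$, lies in $L^2_tL^\infty_x$ but not in $L^\infty_tL^2_x$; and nothing in the weak formulation gives $\nabla u\in L^2_{t,x}$). Hence the interpolation to $u\in L^4_{t,x}$, the Shinbrot energy equality, and the whole relative-energy identity are unavailable from the given data, and the main argument of your proposal does not apply to the class of solutions actually under consideration. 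You flag this yourself in the last paragraph, but what remains after removing the Shinbrot route is only a sketch: you correctly observe that the naive $L^2_tL^\infty_x$ Duhamel contraction does not close because the Oseen-kernel convolution only lands in weak $L^{2,\infty}_t$, and you gesture at a splitting $u=u_{\mathrm s}+u_{\mathrm r}$ with $\|u_{\mathrm s}\|_{L^2_tL^\infty_x}$ small. That is indeed the hard part and is precisely what~\cite{MR1876415} and~\cite{2009.06596} do carefully (via local smallness of the $L^2_t$ integral and iteration, with additional work to control the regular piece uniformly as $t\to0$), but as written the step ``absorbs the former into the dissipation on short time intervals, treats the latter exactly as in the subcritical Serrin case, and iterates'' does not constitute a proof. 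Your diagnosis of where the difficulty lives is accurate; the proof as proposed is incomplete at exactly that point.
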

The non-unique solutions constructed in this paper lie at the borderline of $L^2_tL^\infty_x$, as they satisfy the Type~I bound and even Orlicz-type estimates as discussed in Remark~\ref{Orlicz-type_estimate}.

Another relevant uniqueness condition was proved in 2001 by Koch and Tataru (see Section~\ref{sec:KT} for details):
\begin{theorem}[\cite{KochTataru2001}]
The Navier--Stokes equations \eqref{eq:NSE} have a unique small global
solution in $X$ for all initial data $u_0$ with $\div u_0=0$ which are small in $BMO^{-1}$.
\end{theorem}
The non-unique solutions constructed here belong to the Koch--Tataru path space $X$ and are allowed to have small (even zero) initial data in $BMO^{-1}$. While the Koch--Tataru argument shows that a small ball in $X$ contains only one solution, which follows from Banach’s fixed point theorem, our non-unique solutions lie outside of this ball. More precisely, if the classical solution $U$ in Theorem~\ref{thm-non-unique} is such that $\|U(T_*)\|_{BMO^{-1}}$ is small enough, then $U$ is a unique small Koch--Tataru solution on $[T_*,T]$, and can in fact be extended to a global classical solution on $[T_*,\infty)$. The rest of the family $(u^{(\sigma)})_{\sigma \in (0,1]}$ are of order one in the Koch--Tataru path space.

We also have the following uniqueness conditions that follow from the energy equality for weak solutions in some Onsager-type spaces.
\begin{theorem} \label{thm:Onsager_conditions}
Let $U$ be a classical solution of \eqref{eq:NSE} on $\mathbb T^d\times [T_*,T]$, $d\geq 2$. If $u$ is a weak solution of \eqref{eq:NSE} with $u(T_*)=U(T_*)$ such that one of the following holds
\begin{enumerate}[1.,ref=\arabic*,left=1em]
\item
$\displaystyle u\in L^2_t \dot H^{1}_x([T_*,T]\times\mathbb T^d) \cap L_t^\infty B^{-1}_{\infty,\infty}([T_*,T]\times\mathbb T^d)$,
\item
$\displaystyle u\in L^\infty_t L^2_x([T_*,T]\times\mathbb T^d) \cap L^2_t \dot H^{1}_x([T_*,T]\times\mathbb T^d) \cap L_t^{2,\infty} L^\infty_x([T_*,T]\times\mathbb T^d)$,
\end{enumerate}
then $u\equiv U$ for all $t\in [T_*,T]$.
\end{theorem}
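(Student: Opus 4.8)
The plan is to reduce the statement to the \emph{energy equality}
\begin{equation*}
\tfrac12\|u(t)\|_{L^2}^2+\int_0^t\|\nabla u(s)\|_{L^2}^2\,ds=\tfrac12\|u(0)\|_{L^2}^2\qquad\text{for a.e. }t\in[0,T],
\end{equation*}
which for a weak solution with $u(0)=0$ forces $\|u(t)\|_{L^2}=0$ for a.e.\ $t$, i.e.\ $u\equiv0$. (In Case~2 the left side is literally finite since $u\in L^\infty_tL^2_x$; in Case~1 it is finite because $u\in L^2_t\dot H^1_x\hookrightarrow L^2_tL^2_x$ by Poincar\'e, using that $u$ has zero mean since $u(0)=0$ and the mean is conserved.) Thus the entire content is to show that each of the two Onsager-type conditions forces the energy equality on the distributional solution $u$ (which, a priori, need not even satisfy the energy \emph{inequality}).

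To establish the energy equality, mollify \eqref{eq:NSE} in space, $u^\varepsilon:=\rho_\varepsilon\ast u$, and test against $u^\varepsilon$; since $\div u^\varepsilon=0$ the pressure drops out and one obtains, for a.e.\ $t$,
\begin{equation*}
\tfrac12\|u^\varepsilon(t)\|_{L^2}^2+\int_0^t\|\nabla u^\varepsilon\|_{L^2}^2\,ds=\tfrac12\|u^\varepsilon(0)\|_{L^2}^2+\int_0^t\!\!\int_{\mathbb T^d}(u\otimes u)^\varepsilon:\nabla u^\varepsilon\,dx\,ds.
\end{equation*}
Letting $\varepsilon\to0$, the linear terms converge by strong convergence of $u^\varepsilon$ and $\nabla u^\varepsilon$ in $L^2_tL^2_x$ and $u^\varepsilon(0)\to u(0)=0$. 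Using the Constantin--E--Titi splitting $(u\otimes u)^\varepsilon:\nabla u^\varepsilon=(u^\varepsilon\otimes u^\varepsilon):\nabla u^\varepsilon+r_\varepsilon:\nabla u^\varepsilon$ with $r_\varepsilon:=(u\otimes u)^\varepsilon-u^\varepsilon\otimes u^\varepsilon$, the first term equals $-\tfrac12\int_{\mathbb T^d}|u^\varepsilon|^2\div u^\varepsilon\,dx=0$, so everything reduces to proving that the energy flux $\int_0^t\int_{\mathbb T^d}r_\varepsilon:\nabla u^\varepsilon\,dx\,ds$ tends to $0$; equivalently, in a Littlewood--Paley formulation, that the flux $\Pi_N(t)$ through frequency $2^N$ converges to $0$ in $L^1_t([0,T])$ as $N\to\infty$.

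This last step is where the hypotheses enter, following \cite{1802.05785}. The relevant commutator/flux estimate has the schematic form $|\Pi_N(t)|\lesssim\mathcal N(u(t))\cdot\tau_N(u(t))$, where $\mathcal N(u(t))$ is a critical-scaling norm of $u(t)$ and $\tau_N(u(t))=\sum_{q\gtrsim N}2^{2q}\|\Delta_q u(t)\|_{L^2}^2$ is the high-frequency tail of $\|\nabla u(t)\|_{L^2}^2$, so that $\tau_N(u(t))\le\|\nabla u(t)\|_{L^2}^2$ and $\tau_N(u(t))\to0$ pointwise in $t$. In Case~1, $\mathcal N(u(t))=\|u(t)\|_{\dot B^{-1}_{\infty,\infty}}$ is bounded in $t$, so $\int_0^T|\Pi_N|\,dt\lesssim\|u\|_{L^\infty_t\dot B^{-1}_{\infty,\infty}}\int_0^T\tau_N(u(t))\,dt\to0$ by dominated convergence (with dominating function $\|\nabla u(t)\|_{L^2}^2\in L^1_t$). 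Case~2 is more delicate: there $\mathcal N(u(t))=\|u(t)\|_{L^\infty_x}$ lies only in the weak Lebesgue space $L^{2,\infty}_t$, where dominated convergence fails, so one cannot conclude directly. Instead, one must reorganize the estimate — decomposing the Littlewood--Paley sum with a geometric weight, or splitting $[0,T]$ along the level sets $\{\|u(t)\|_{L^\infty_x}>\lambda\}$, which have measure $\lesssim\lambda^{-2}$ — so that $\|u(t)\|_{L^\infty_x}$ is paired, via the sharp Lorentz--H\"older inequality $L^{2,\infty}_t\cdot L^{2,1}_t\hookrightarrow L^1_t$, against a quantity tending to $0$ in the \emph{strong} Lorentz space $L^{2,1}_t$ rather than merely in $L^2_t$. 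Extracting that $L^{2,1}_t$ decay is the technical heart of the argument and the step I expect to be the main obstacle; it is exactly this borderline bookkeeping that pins the hypothesis $L^{2,\infty}_tL^\infty_x$ at the edge of the known uniqueness theory, just beyond the Ladyzhenskaya--Prodi--Serrin endpoint $L^2_tL^\infty_x$ for which uniqueness is classical.
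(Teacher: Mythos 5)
Your overall reduction (energy equality plus $u(0)=0$ forces $u\equiv0$) and your treatment of Case~1 are sound and match the intended argument: the paper does not prove this theorem itself but quotes it from \cite{1802.05785}, remarking that condition~1 follows because $L^\infty_tB^{-1}_{\infty,\infty}\cap L^2_t\dot H^1_x$ interpolates into the Onsager class $L^3_tB^{1/3}_{3,c_0}$, where the flux vanishes; your direct CCFS-type bound $|\Pi_N|\lesssim\|u\|_{\dot B^{-1}_{\infty,\infty}}\,\tau_N$ with dominated convergence in $t$ is an equivalent route.

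Case~2, however, has a genuine gap, and it is exactly the step you flag but do not carry out. Your plan is to keep the inviscid flux estimate and win by Lorentz duality, pairing $\|u(t)\|_{L^\infty_x}\in L^{2,\infty}_t$ against a factor tending to $0$ in $L^{2,1}_t$. But the factor you can actually extract from the flux is controlled only by quantities like $\|\nabla u(t)\|_{L^2_x}\|u_{>N}(t)\|_{L^2_x}$, which is dominated by $\|\nabla u(t)\|_{L^2_x}\|u\|_{L^\infty_tL^2_x}\in L^2_t=L^{2,2}_t$; since $L^{2,1}_t\subsetneq L^2_t$, there is no dominating function in $L^{2,1}_t$ and dominated convergence cannot upgrade the pointwise-in-$t$ decay to $L^{2,1}_t$ decay (the high-frequency tail $\|u_{>N}(t)\|_{L^2}$ does not go to zero uniformly in $t$ for a merely $L^\infty_tL^2_x$ solution). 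This is precisely why the paper stresses that ``the second condition requires an argument that takes advantage of the Laplacian'': the argument in \cite{1802.05785} does not run the flux against $\|u\|_{L^\infty}$ by H\"older alone, but absorbs the high-frequency part of the flux into the dissipation (schematically, $\|u\|_\infty\|u_{>N}\|_2\|\nabla u_{>N}\|_2\le\tfrac\nu2\|\nabla u_{>N}\|_2^2+C\|u\|_\infty^2\|u_{>N}\|_2^2$), and then handles the resulting Gr\"onwall factor $\|u\|_\infty^2\in L^{1,\infty}_t$ by splitting time along the superlevel sets of $\|u(t)\|_\infty$, whose measure is $\lesssim\lambda^{-2}$, so that the exceptional set contributes little and the complement is treated as in the $L^2_tL^\infty_x$ case. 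Without bringing the viscous term into the estimate in this essential way, the borderline $L^{2,\infty}_tL^\infty_x$ case does not close, so as written your proof of Case~2 is incomplete and the proposed purely duality-based repair is unlikely to succeed.
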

\begin{proof}
The first condition simply implies that $u\in L^3(T_*,T;B^{\frac13}_{3,c_0})$ via interpolation. Then the estimate on the energy flux from \cite{MR2422377} yields the energy equality 
\[
\tfrac12 \|u(t)\|_{L^2}^2 = \tfrac12 \|u(T_*)\|_{L^2}^2 - \nu \int_{T_*}^t \|\nabla  u(s)\|_{L^2}^2 \, ds, \qquad T_* \leq t \leq T,
\]
which means that $u$ is a Leray--Hopf weak solution on $[T_*,T]$.

The second condition does not directly imply that $u\in L^3(T_*,T;B^{\frac13}_{3,c_0})$. Nevertheless, in \cite{1802.05785}, Theorem 1.1, using an argument that takes advantage of the Laplacian, it was shown that $u$ still satisfies the energy equality and hence it is a Leray--Hopf solution on $[T_*,T]$. Note that Theorem 1.1 is stated for $d=3$ only, but the proof is dimension independent.

To conclude, since $u(T_*)$ is smooth, we can apply Leray's weak--strong uniqueness principle, which holds for all $d\geq 2$, and says that smooth solutions are unique in the class of Leray--Hopf weak solutions. Thus $u \equiv U$ for all $t\in[T_*,T]$.

\end{proof}

Finally, we can show that for any weak solution exhibiting an instantaneous Type I blow-up of the $L^\infty$ norm, the energy must blow up as well. 

\begin{theorem} \label{thm:energy_blow-up}
Let $u$ be a weak solution of \eqref{eq:NSE} on $\mathbb T^d\times [0,T]$, $d\geq 2$, such that 
\begin{enumerate}[left=1em]
\item $u$ is a classical solution of \eqref{eq:NSE} on $\mathbb T^d\times([0,T]\setminus\{T_*\})$ for some $T_* \in [0,T)$,
\item $u(T_*)\in C^\infty(\mathbb T^d)$,
\item $u$ exhibits instantaneous Type I blow-up at $T_*$:
\[
\limsup_{t\to T_*+} \|u(t)\|_{L^\infty} = \infty, \qquad \limsup_{t\to T_*+}\,\sqrt{t-T_*}\|u(t)\|_{L^\infty} < \infty.
\]
\end{enumerate}
Then the energy of $u$ instantaneously blows up at $t=T_*$:
\[
\lim_{t\to T_*+} \|u(t)\|_{L^2} = \infty.
\]
\end{theorem}
\begin{proof}
Suppose to the contrary that $\|u(t)\|_{L^2}$ is bounded. Since $u$ is a classical solution on $(T_*,T]$, it satisfies energy equality on every interval $[t_0,T]$ with $t_0\in(T_*,T]$, and hence
\[
2\nu\int_{T_*}^T \|\nabla u(t)\|^2_{L^2} \, dt \leq \lim_{t \to T_*+}  \|u(t)\|^2_{L^2} < \infty.
\]
Thus $u$ satisfies the second condition of Theorem~\ref{thm:Onsager_conditions} and has to coincide with a smooth solution arising from $u(T_*)$ on some short time interval. Then 
\[
\limsup_{t\to T_*+} \|u(t)\|_{L^\infty} < \infty,
\]
a contradiction.
\end{proof}

\subsection{Inverse energy cascade}\label{sec:inverse_cascade}

Theorem~\ref{thm-non-unique} reveals a new mechanism for non-uniqueness that does not rely on the presence of a spatial singularity. Even when the velocity field remains smooth and even analytic in space for every fixed $t\geq0$, uniqueness can fail spontaneously.   
In other words, the breakdown of uniqueness does not require rough initial data, nor finite-time blow-up; instead, it arises from the delicate transfer of energy across scales that can be generated within a regular flow.

The underlying mechanism is an \emph{inverse energy cascade} originating at infinitely high wavenumbers.  
To quantify this process, consider the energy flux through frequency~$N$ defined via a projection $P_N$ to low modes:
\[
\Pi_N(I) = -\int_I \int_{\mathbb{T}^d} (u \cdot \nabla u) \cdot P_N u \, dx dt.
\]
For instance, $P_N$ can be the Littlewood--Paley projection defined in Section \ref{sec:notation} below. The sign convention is chosen so that a positive $\Pi_N(I)$ represents a net transfer of energy from high to low frequencies. For any weak solution $u$ of the Navier--Stokes equations, the following energy balance holds:
\[
\tfrac12 \|P_Nu(t)\|_{L^2}^2 = \tfrac12 \|P_Nu(t_0)\|_{L^2}^2 +\Pi_N([t_0,t])- \nu \int_{t_0}^t \|\nabla P_N u(s)\|_{L^2}^2 \, ds.
\]
For classical solutions on $[0,T]$, the energy flux across frequency $N$ vanishes
\[
\lim_{N\to \infty} \Pi_N([t_0,t]) =0, \qquad  0 \leq t_0 \leq t \leq T,
\]
ruling out anomalous dissipation, and hence yielding the standard energy equality. In fact, the same conclusion also holds for weak solutions in the Onsager class $u \in L^3([0,T];B^{\frac13}_{3,c_0})$, see \cite{MR2422377}, or satisfying one of the conditions of Theorem~\ref{thm:Onsager_conditions}.

In contrast, the solutions constructed in Theorem~\ref{thm-non-unique} exhibit a strong inverse energy cascade at time $t=T_*$, despite being space-time smooth away from $T_*$:
\[
\lim_{N\to \infty} \Pi_N([T_*,t]) = +\infty, \qquad \lim_{N\to \infty} \Pi_N([t,T]) =0,
\]
for any $t \in(T_*,T)$. This singular behavior of the flux provides an instantaneous injection of energy from infinitely high frequencies and initiates a bifurcation from a classical solution creating a new branch of smooth evolution. Some details of the cascade mechanism are discussed in Subsection \ref{sec:idea}.

\noindent
\textbf{Comparison to ``corner singularities'' and ``initial layers''.} 
The constructed blow-up from the right in Theorem \ref{main-thm} appears to be the first of its kind not only for fluid equations, but in the area of partial differential equations in general. The closest analog might be \emph{corner singularities}. A canonical example is the heat equation with mismatched boundary and initial conditions. Consider the heat equation on the half-line $\{x>0\}$:
\[
\partial_t u = u_{xx}, \qquad  u(x,0)=0, \qquad u(0,t)=1.
\]
This problem admits the explicit solution
\[
u(x,t) = \operatorname{erfc}\!\left(\frac{x}{2\sqrt{t}}\right)=\frac2{\sqrt \pi}\int_{x/{2\sqrt{t}}}^{\infty} e^{-s^2}\, ds.
\]
Differentiating yields
\[
|u_x(0,t)| = (\pi t)^{-\frac12}.
\]
Although this boundary-driven creation of fine scales resembles an inverse cascade across frequencies, it differs from our mechanism in two essential ways. First, in the heat equation example, the small scales are injected by the boundary, whereas in our construction there is no injection of the energy by any external force or boundary; the cascade is produced solely by nonlinear interactions. Second, for the heat equation problem, there does not exist a classical or smooth on $[0,\infty)$ matching both the initial and boundary data; accordingly, this phenomenon does \emph{not} lead to a loss of uniqueness. In contrast, the solutions constructed in the present work are smooth for $t>0$ and undergo instantaneous Type~I blow-up at $t=0$ driven by a complete inverse energy cascade from arbitrarily high frequencies (or zero scales), thereby producing a new smooth branch that immediately departs from the classical evolution with the same initial data.

\subsection{Main ideas of the construction}\label{sec:idea}
Here we give a heuristic overview of the multi-scale construction leading to Theorems~\ref{main-thm} and~\ref{thm-non-unique}, which, at leading order, involves infinitely many interacting components with strong localization in time and frequency, and weak localization in space. The goal is to build a smooth solution of the Navier--Stokes equations that stays regular
for $t>0$, yet whose $L^\infty$ norm diverges as $t\to0^+$. To achieve this, we construct an infinite hierarchy of oscillatory building blocks indexed by $(j,k)\in\mathcal J_d\times\mathbb N$ (with $\mathcal J_d$ some finite set), each corresponding to a shell in Fourier space. The localized elements are designed so that the solution components in shells with index $k+1$ interact nonlinearly to generate those in shells with index $k$ on a particular short time scale. 

\noindent\textbf{Setup and notation.}
Recall that in both of the main theorems, we are given a background $U$ which is classical on $[0,T]$. For convenience, we may translate in time to arrange that $U$ is instead a solution on $[-T_*,T-T_*]$. Thus, the blow-up time is transferred to $t=0$, and we are left to construct solutions of \eqref{eq:NSE} 
that agree with $U$ until $t=0$ and diverge immediately thereafter.

For $k\in\mathbb N$ and $j\in \mathcal J_d$ (see Section~\ref{sec:notation} for the definition), the frequency scales $N_{j,k}$ will be designed to rapidly increase according to the ordering
\[
A^{b^k}\sim N_{1,k} \ll N_{2,k} \ll \dots \ll N_{J_d,k} \ll N_{1,k+1} \sim A^{b^{k+1}}.
\]
For a decreasing sequence of times $N_{1, k+1}^{-2} \ll t_k \ll N_{J_d,k}^{-2}$, the energy transfer from the frequency levels $\{N_{j,k+1}\}_{j\in \mathcal J_d}$ to $\{N_{j,k}\}_{j\in \mathcal J_d}$ occurs on the time interval $[t_{k+1},t_k]$; see Figure~\ref{fig:time_evolution}.

\noindent\textbf{Profiles and amplitudes.}
We introduce localized oscillatory profiles which, up to technical modifications, are of the form
\[
\psi_{j,k}(x)\approx N_{j,k}^{-2}a_{j,k}(x)\theta_j\sin(N_{j,k}\eta_j\cdot x),
\]
where $\theta_j,\eta_j\in\mathbb S^{d-1}$ satisfy $\theta_j\cdot\eta_j=0$ and $N_{j,k}\eta_j\in\mathbb Z^d$ for all $j\in \mathcal J_d$ and $k\in\mathbb N$. The slowly varying amplitudes $a_{j,k}$ are bounded and will be chosen recursively to ensure the creation of the desired velocity at the adjacent lower frequency shells. 

\noindent\textbf{The ansatz.}
The full velocity field $u$ is defined to be the sum of a leading order critical part $v$ and a small, slightly subcritical perturbation $w$. The leading part is assembled as a series\footnote{Here and elsewhere, sums over $k$ and $j$ are implicitly taken over $\mathbb N=\{0,1,2\ldots\}$ and $\mathcal J_d$, respectively.} which roughly takes the form
\[
v(x,t) = \sum_k v_k(x,t), \qquad v_k(x,t) \approx -\sum_j N_{j,k} e^{-N_{j,k}
^2t} \mathbb P\Delta \psi_{j,k}(x),  \qquad t \geq t_k,
\]
where $\mathbb{P}$ denotes the Leray projector. By the slowness of $a_{j,k}$ relative to the frequency scale $N_{j,k}$, we have $$-\mathbb P\Delta\psi_{j,k}(x)\approx a_{j,k}(x)\theta_j\sin(N_{j,k}\eta_j\cdot x)=O(1),$$ with errors that are lower order in the sense that they involve the ratio of the successive frequency scales, or decay in time much faster than the natural scale-invariant rate.

Each component $v_k$ is generated over the short interval $[t_{k+1},t_k]$ by self-interactions of higher modes $v_{k+1}\otimes v_{k+1}$, thereby transferring energy
down the frequency ladder, producing the inverse energy cascade. To justify this ansatz, we need to show that
\begin{equation} \label{ansatz-intro}
v_k(x,t) \approx -\int_0^{t} e^{(t-s)\Delta }\mathbb P\div ( v_{k+1}\otimes v_{k+1})(s) \, ds.
\end{equation}
When $d=2$, the separation of time scales ensures that the following integral is dominated by local interactions:
\begin{equation}\label{scale-separation}
\int_0^{t} N_{j,k} N_{j',k} e^{-N_{j,k}^2 s-N_{j',k}^2 s} \, ds \sim 
\begin{cases}
1, &j=j',\\
N_{j,k}^{-1} N_{j',k} \ll 1, &j > j',
\end{cases}
\end{equation}
for $t\geq t_{k-1}$. A precise computation showing the smallness of nonlocal interactions can be found in \eqref{eq:NonlocalInteractions}. When $d\geq3$, these interactions can be minimized by selection of the supports of the building blocks. Thus we have
\[
\begin{split}
\int_0^t P_{< N_{1,k+1}}\mathbb P\div v_{k+1}\otimes v_{k+1} \, ds &\approx P_{< N_{1, k+1}}\mathbb P\div \sum_j \mathbb P\Delta \psi_{j,k+1} \otimes \mathbb P\Delta \psi_{j,k+1}\\
& \approx \mathbb P\div\left(\sum_ja_{j,k+1}^2\theta_j\otimes\theta_j\right)
\end{split}
\]
for $t\geq t_k$, where we only take into account the leading order terms in the approximation.
Choosing the coefficients $a_{j,k}$ so that
\[
\sum_ja_{j,k+1}^2\theta_j\otimes\theta_j \approx \mathcal D \sum_j N_{j,k}\psi_{j,k}+p\Id,
\]
for some scalar function $p(x)$, where $\mathcal D$ is as defined in Section~\ref{sec:notation} and ensures that
\[
\int_0^t P_{< N_{1,k+1}}\mathbb P\div v_{k+1}\otimes v_{k+1} \, ds \approx \sum_j N_{j,k} \Delta \psi_{j,k} + \nabla p,
\]
for $t\geq t_k$, which justifies \eqref{ansatz-intro}. The smallness of nonlocal interactions $v_k \otimes v_{k'}$ for $k\ne k'$ follows as well from the separation of time scales.

\begin{figure}[h!]
\centering
\begin{tikzpicture}[scale=1.5, >=stealth]

% Axes
\draw[->, thick] (0,0) -- (6,0) node[right] {\small $t$};
\draw[->, thick] (0,0) -- (0,1.8);
%node[above] {\small $\|u_k(t)\|_{L^\infty}$};

% Vertical dashed lines (t1, t2, t3)
\draw[dashed] (1/6,0) -- (1/6,1.3);
\draw[dashed] (2.2,0) -- (2.2,0.485);
%\draw[dashed, thick] (8.5,0) -- (8.5,4.2);

% Labels
\node[below] at (1/6,0) {\small $t_{k+1}$};
\node[below] at (2.2,0) {\small $t_k$};
\node at (3.3,0.6) {\small $\|v_k(t)\|_{L^\infty}$};
\node at (0.9,1.2) {\small $\|v_{k+1}(t)\|_{L^\infty}$};
%\node[below] at (3,0) {\small $t_3$};

\draw[thick, smooth, domain=0:6, samples=300]
  plot (\x,{(2/(1+exp(-2*\x))-1)*0.8*exp(-0.2*\x)});
  %node[pos=0.5, left] {\small $\|u_3(t)\|_{L^\infty}$};

\draw[thick, smooth, domain=0:6, samples=300]
  plot (\x,{(2/(1+exp(-30*\x))-1)*1.9*exp(-2*\x)});
  %node[pos=0.6, left] {\small $\|u_4(t)\|_{L^\infty}$};

\end{tikzpicture}
\caption{Time evolution of $\|v_k(t)\|_{L^\infty}$ and $\|v_{k+1}(t)\|_{L^\infty}$.}
\label{fig:time_evolution}
\end{figure}
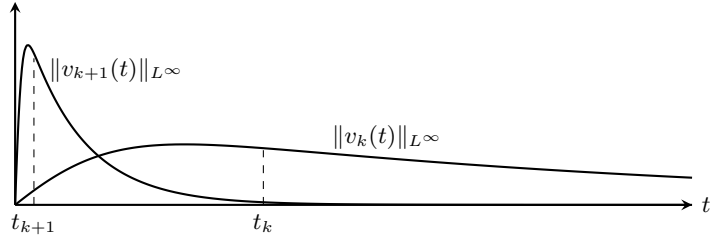

\begin{remark}
To ensure that the solution is locally $L^2$ in space and time (which is crucial for the weak solution formulation; see Section~\ref{sec:weak_solutions}), we additionally include a cutoff function $\varphi_{j,k}$ in the ansatz. (See the precise definition in Subsection \ref{sec:potential} below.) As a result, the component of the solution in the frequency shell near $N_{j,k}$ occupies, to leading order, a spatial domain of volume $O(2^{-k})$.
\end{remark}

\addtocontents{toc}{\protect\setcounter{tocdepth}{1}}

\section{Preliminaries}
\label{sec:prelim}

\subsection{Notation}\label{sec:notation}

Let $\mathbb T^d$ denote the $d$-dimensional torus $(\mathbb R/2\pi\mathbb Z)^d$. We define the spaces $L^p(\mathbb T^d)$ in the standard way for $1\leq p\leq\infty$, and write the norm as $\|f\|_{L^p(\mathbb T^d)}$ or $\|f\|_p$ when there is no ambiguity. We also denote by $L^{p,\infty}$ the Lorentz space (or weak $L^p$ space) defined in the standard way via distribution function  $\lambda(s)=|\{x\in\mathbb T^d:|f(x)|>s\}|$. Here and elsewhere, $|\Omega|$ denotes the volume of $\Omega$ restricted to the fundamental domain of $\mathbb T^d$. We point out the inclusion 
\[L^p \subset L^{p,\infty},\]
which is strict in the sense that, while every function in $L^p$ belongs to $L^{p,\infty}$, the converse fails due to examples resembling $|x|^{-d/p}$ in a neighborhood of $x=0$.

We also write $\|\cdot\|_{C^\alpha}$ to denote the $\alpha$-H\"older seminorm on $\mathbb R^d$ or $\mathbb T^d$.

For $f\in L^2(\mathbb T^d)$, we define the Fourier series and inverse according to
\begin{align*}
    \hat f(\xi)\coloneqq \fint_{\mathbb T^d}f(x)e^{-ix\cdot\xi}dx,\qquad\mathcal F^{-1}g(x)\coloneqq\sum_{\xi\in\mathbb Z^d}g(\xi)e^{ix\cdot \xi},
\end{align*}
for $\xi\in\mathbb Z^d$ and $x\in\mathbb T^d$, where $\fint_\Omega\coloneqq |\Omega|^{-1}\int_{\Omega}$ for $\Omega\subset\mathbb T^d$. 

Then the following operators can be defined by multiplication on the Fourier side: the heat propagator $$e^{t\Delta}f\coloneqq\mathcal F^{-1}(e^{-t|\xi|^2}\hat f(\xi)),$$ the Leray projection $$\mathbb Pf\coloneqq \mathcal F^{-1}\left(\left(\Id-\frac{\xi\otimes \xi}{|\xi|^2}\right)\hat f(\xi)\right),$$ for $f$ vector-valued, and the Littlewood--Paley projections
$$P_Nf\coloneqq\mathcal F^{-1}(\psi(|\xi|/N)\hat f(\xi)),$$
for all dyadic numbers $N\in 2^\mathbb N$, where $\psi\in C_c^\infty((2/3,3/2))$ is a bump function such that $\sum_N\psi(r/N)\equiv 1$ for all $r\in[3/4,\infty)$, say.

Let $\mathrm{Sym}^{d}(\mathbb R)$ be the space of real symmetric $d\times d$ matrices, equipped with some norm. Let $J_d\coloneqq \dim \mathrm{Sym}^{d}(\mathbb R)=d(d+1)/2$, and define $\mathcal J_d\coloneqq\{1,2,\ldots,J_d\}$. For two vectors $v,w\in\mathbb R^d$, we define the tensor product $(v\otimes w)_{i,j}=v_iw_j$ and symmetrized tensor product $(v\odot w)_{i,j}=\frac12(v_iw_j+v_jw_i)$. For brevity, we apply this algebraic notation to operators as well; for instance, if $f$ is a vector function, then we denote by $\nabla\odot f$ the symmetric tensor field with components $\frac12(\partial_if_j+\partial_jf_i)$.

We define the additional Fourier multipliers
\begin{align*}
    \mathcal Df&\coloneqq2\nabla\odot f-2(\div f)\Id,\\
    \newD f&\coloneqq2\nabla\odot f-(\div f)\Id,\\
    \mathcal R&\coloneqq \Delta^{-1}\newD,\\
    \mathbb Q&\coloneqq2\Delta^{-1}\nabla\odot \mathbb P\div,
\end{align*}
for $f$ a vector field on $\mathbb T^d$, and record the following essential identities: for $\mathcal D$,
\begin{align*}
    \div \mathcal D=\Delta\mathbb P,
\end{align*}
for $\newD$,
\begin{align*}
    \div\newD = \Delta,\qquad \newD=\mathbb Q\mathcal D+\left(2\frac{\nabla\otimes\nabla}{\Delta}-\Id\right)\div,
\end{align*}
for $\mathcal R$,
\begin{align*}
    \div \mathcal R=P_{\neq0},
\end{align*}
where $P_{\neq0}$ is the projection operator to frequencies $\xi\neq0$, and for $\mathbb Q$,
\begin{align}\label{eq:Q_identities}
    \mathbb Q=\mathcal R\mathbb P\div,\qquad \mathbb Q\mathcal D=2\nabla\odot\mathbb P.
\end{align}
This version of the anti-divergence $\mathcal R$ is standard and improves on the original definition from~\cite{DeLellisSzekelyhidi2013} in the sense that $\Delta\mathbb P$ is now a classical differential operator. This is important in the sequel because it guarantees locality and $C^1\to L^\infty$ boundedness.

\subsection{Koch--Tataru spaces} \label{sec:KT}
We state two equivalent definitions of the space $BMO^{-1}(\mathbb R^d)$. Then the periodic version $BMO^{-1}(\mathbb T^d)$ is defined as the space of distributions on $\mathbb T^d$ whose periodic extensions are in $BMO^{-1}(\mathbb R^d)$, with the norm defined as the norm of the extension.

In practice, we use the definition that a distribution $f$ is in $BMO^{-1}(\mathbb R^d)$ if there exist $g_1,\ldots,g_d\in BMO$ with $f=\partial_1g_1+\cdots+\partial_dg_d$, with the norm $\|f\|_{BMO^{-1}}\coloneqq \inf\sum_{i=1}^d\|g_i\|_{BMO}$, the infimum taken over all such $g$. Here, $BMO$ is defined as the space of $f\in L^1_{loc}$ such that
\begin{align*}
    \|f\|_{BMO}\coloneqq \sup_{Q}\fint_Q\left|f(x)-\fint_Qf(y)dy\right|dx<\infty,
\end{align*}
where the supremum is taken over cubes $Q$ in $\mathbb R^d$. An equivalent definition of the $BMO^{-1}$ norm is
\begin{align*}
    \|f\|_{BMO^{-1}(\mathbb R^d)}=\sup_{x_0\in\mathbb R^d}\sup_{R>0}\left(\int_0^{R^2}\fint_{B(x_0,R)}|e^{t\Delta}f(x)|^2dxdt\right)^\frac12.
\end{align*}
We write $BMO^{-1}_{T}$ to denote the space whose norm is defined analogously, but with the supremum taken over only  $R\in(0,T^\frac12)$. For periodic functions, it is clear that inclusion in $BMO^{-1}$ and $BMO^{-1}_{T}$ is equivalent.

We endow $BMO^{-1}(\mathbb T^d)$ with the weak-* topology, in the sense that it is the dual of the Triebel--Lizorkin space $\dot F^1_{1,2}$ with norm\footnote{We supply the definition in the periodic case only in order to avoid technicalities that appear in $\mathbb R^d$ due to the zero frequency.}
\begin{align*}
\|f\|_{\dot F_{1,2}^1(\mathbb T^d)}&=\left\|\left(\sum_NN^2|P_Nf|^2\right)^{\frac12}\right\|_{L^1(\mathbb T^d)}.
\end{align*}
This topology is the natural one in the sense that it exactly captures the time-continuity of the heat propagator from $BMO^{-1}$ initial data.

The second formulation of $BMO^{-1}$ motivates the definition of the natural path space for the heat equation from such data, first introduced by Koch and Tataru~\cite{KochTataru2001}. For $T\in(0,\infty]$, let
\begin{align*}
    X_T(\mathbb R^d)\coloneqq \left\{u\in L_{loc}^2(\mathbb R^d\times[0,T];\mathbb R^d):\div u=0,\,\|u\|_{X_T}<\infty\right\},
\end{align*}
where
\begin{align*}
    \|u\|_{X_T}\coloneqq \sup_{t\in(0,T]}t^\frac12\|u(t)\|_{L^\infty}+\sup_{x_0\in\mathbb R^d}\sup_{R\in(0,T^\frac12]}\Big(\int_0^{R^2}\fint_{B(x_0,R)}|u(x,t)|^2dxdt\Big)^\frac12.
\end{align*}
By time translation, we can analogously define $X_{I}$ for an interval $I\subset\mathbb R$.

Then $X_T(\mathbb T^d)$ is defined as those functions $u\in L^2(\mathbb T^d\times[0,T];\mathbb R^d)$ whose periodic extensions to $\mathbb R^d$ lie in $X_T(\mathbb R^d)$.

\subsection{Weak solutions}\label{sec:weak_solutions}

The notion of a \emph{weak solution} provides the minimal analytic framework in which the
Navier--Stokes equations can be interpreted for arbitrary finite-energy initial data.
Since the pioneering work of Leray~\cite{Leray1934}, it has been known that weak solutions
exist globally in time for any $d\ge2$.

\begin{definition}\label{def:weak_solutions}
Denote by $\mathcal{D}_T$ the space of divergence-free test functions $\varphi \in C^\infty (\mathbb{T}^d \times \mathbb{R}) $ such that  $\varphi =0$ if $t\geq T$.
Let $ u_0 \in L^2(\mathbb{T}^d)$  be weakly divergence-free. A vector field $ u \in L^2 (\mathbb{T}^d \times [0,T])$ is a weak solution of \eqref{eq:NSE} with initial data $u_0$ if the following hold:
\begin{enumerate}[1.,ref=\arabic*,left=1em]
    \item For $a.e.$ $t\in [0,T]$, $u$ is weakly divergence-free;
    
    \item For any $\varphi \in \mathcal{D}_T$,
\begin{equation}
\int_{\mathbb{T}^d} u_0(x)\cdot \varphi(x,0) \, dx = - \int_0^T \int_{\mathbb{T}^d} u\cdot \big(  \partial_t \varphi+ \Delta \varphi +  u \cdot \nabla \varphi  \big) \, dx dt .
\end{equation}
\end{enumerate}
\end{definition}

Note that weak solutions also satisfy the mild formulation, i.e,
\[
u = e^{t\Delta}u_0 - \int_0^t e^{(t-s)\Delta} \mathbb{P}\div(u\otimes u) (s)\, ds,
\]
in the sense of distributions, \cite{MR316915}.

\addtocontents{toc}{\protect\setcounter{tocdepth}{2}}

\section{Construction of the principal part}\label{sec:principal}

\subsection{Frequency scales}\label{frequency_scales_section}

In Sections~\ref{sec:principal}--\ref{sec:est}, we construct fields on the $2\pi$-periodic torus $\mathbb T^d\coloneqq \mathbb R^d/(2\pi\mathbb Z)^d$. These fields will later be rescaled and perturbed aperiodically in Sections~\ref{sec:corrector}--\ref{sec:proof}.

We introduce the parameters $b>1$, to be chosen large depending only on $d\geq2$, and $A>1$, to be chosen large depending on all other parameters. Fix also an $m_*\in\mathbb N$ to be specified later. From here we define the frequency scales as follows: fix the minimum frequency $N_{1,0}=1$ at $(j,k)=(1,0)$, while for all other $k\in\mathbb N$ and $j\in\mathcal J_d$, define
\begin{equation}\label{N_definition}
N_{j,k}=\left\{
\begin{aligned}
&m_*\big\lceil A^{b^{k+(j-1)/J_d}}\big\rceil, \quad &d=2,\\
&m_*\big\lceil A^{b^{k}}\big\rceil, \quad &d \geq 3,
\end{aligned}
\right.
\end{equation}
where $\lceil \cdot\rceil$ is the ceiling function. We assume $A$ is sufficiently large depending on $b$, $d$, and $m_*$ so that $N_{j_1,k_1}\ll N_{j_2,k_2}$ if either $k_1<k_2$, or $k_1=k_2$ and $j_1<j_2$.

For $k\in\mathbb N_{\geq1}$ and $j\in\mathcal J_d$, we define also an interceding sequence of frequency scales
\begin{equation}\label{M_definition}
    M_{j,k}=\left\{
\begin{aligned}
&\big\lceil A^{\gamma b^{k}}\big\rceil, \quad &j=1\text{ or }d\geq 3,\\
&\big\lceil A^{\gamma b^{(j-1)/J_d}}\big\rceil M_{1,k}, \quad &j\geq2\text{ and }d=2,
\end{aligned}
\right.
\end{equation}
for a parameter $\gamma\in(0,1)$ to be specified. Assuming additionally that
\begin{align*}
    \gamma> b^{-1/J_d},
\end{align*}
and taking $A$ still larger as needed, we arrange that for $d=2$,
\begin{equation}\begin{aligned}\label{N_and_M_ordering}
    A^cN_{j-1,k}\leq M_{j,k}\leq A^{-c}N_{j,k}&,\quad 2\leq j\leq J_d,\\
    A^cN_{J_d,k-1}\leq M_{1,k}\leq A^{-c} N_{1,k}&
\end{aligned}\end{equation}
and for $d\geq 3$,
\begin{equation}\begin{aligned}\label{N_and_M_ordering_3d}
   A^cN_{j,k-1}\leq M_{j,k}\leq A^{-c}N_{j,k}&,\quad 1\leq j\leq J_d
   % A^cN_{J_d,k-1}\leq M_{1,k}&
\end{aligned}\end{equation}
for some $c>0$ (depending on $b$ and $\gamma$).

\subsection{Geometry of the building blocks}

Choose some $\theta_j,\eta_j\in\mathbb S^{d-1}\cap\mathbb Q^d$ such that $\theta_j\cdot\eta_j=0$ for all $j\in \mathcal J_d$, and $(\theta_j\otimes\theta_j)_{j\in\mathcal J_d}$ are linearly independent and generate $\Id$ with positive coefficients. Fixing $m_*$ in Section~\ref{frequency_scales_section} to be such that $m_*\eta_j\in\mathbb Z^d$ for all $j$, we ensure that $N_{j,k}\eta_j\in\mathbb Z^d$ for all $j$ and $k$.

For $\rho<1/10$, define $\mathcal C_{j,k}(\rho)$ to be the $2\pi/M_{j,k}$-periodic cylinder of radius $\rho M_{j,k}^{-1}$ whose axis is the periodic line $\mathbb R\theta_j\pmod{2\pi\mathbb Z^d/M_{j,k}}$. Fix $\delta_0>0$ small enough (depending on $d$ but not $k$) so that
\begin{align}\label{pipe_volume_bound}
|\mathcal C_{j,k}(4\delta_0)|\leq1/(10J_d).
\end{align}
When $d\geq3$, taking $\delta_0>0$ even smaller as necessary, we may translate each cylinder so that
\begin{align*}
    \mathcal C_{j_1,k}(4\delta_0)\cap\mathcal C_{j_2,k}(4\delta_0)=\emptyset\qquad\forall j_1\neq j_2.
\end{align*}

For $k\geq1$, let $\varphi_{j,k}\in C_c^\infty(\mathcal C_{j,k}(\delta_0))$ be a $2\pi/M_{j,k}$-periodic cutoff function with $\theta_j\cdot\nabla\varphi_{j,k}=0$, normalized so that
\[
(2\pi)^{-d}\int_{\mathbb T^d}\varphi_{j,k}^2(x)\sin^2(N_{j,k}\eta_j\cdot x) \, dx =1, \qquad \forall j,k,
\]
and with
\begin{equation} \label{eq:varphi_bound}
\|\nabla^n \varphi_{j,k}\|_\infty\lesssim M_{j,k}^{n}, \qquad \forall j,k.
\end{equation}
Then define the following regions in $\mathbb T^d$:
\begin{align*}
    \Omega_k=\bigcap_{k'=1}^k\bigcup_{j\in\mathcal J_d}\mathcal C_{j,k'}((3-2^{-(k-k')})\delta_0),\quad \widetilde\Omega_k=\bigcap_{k'=1}^k\bigcup_{j\in \mathcal J_d}\mathcal C_{j,k'}((3-\frac342^{-(k-k')})\delta_0),
\end{align*}
with the convention $\Omega_0=\widetilde\Omega_0=\mathbb T^d$.

\begin{lemma}
For any $k\geq0$, we have
    \begin{align}
    |\Omega_{k}|\leq2^{-k}|\mathbb T^d|.\label{Omega_volume_estimate}
\end{align}
Moreover, there is a constant $C_0>1$ such that if $Q\subset\mathbb T^d$ is a cube of length $\ell(Q)\in [C_0M_{1,k_0}^{-1},2\pi)$, then
\begin{align}\label{cube_intersection_volume_bound}
    |\Omega_{k}\cap Q|\leq2^{-(k-k_0)}|Q|\qquad\forall k\geq k_0.
\end{align}
\end{lemma}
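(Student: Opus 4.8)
The plan is to deduce both \eqref{Omega_volume_estimate} and \eqref{cube_intersection_volume_bound} from one auxiliary estimate, proved by induction that peels off the coarsest frequency scale at each step. For $0\le\kappa\le k$, introduce the enlarged pipe set
\[
\widehat\Omega_k^{(\kappa)}:=\bigcap_{k'=\kappa+1}^{k}\ \bigcup_{j\in\mathcal J_d}\mathcal C_{j,k'}(3\delta_0),
\]
and observe $\Omega_k\subseteq\widehat\Omega_k^{(\kappa)}$ for every $\kappa$, since $(3-2^{-(k-k')})\delta_0<3\delta_0$ and intersecting over a subset of the scales only enlarges the set. The auxiliary claim is: there is a dimensional constant $C_0>1$ such that, for all $0\le\kappa\le k$ and every cube $Q\subseteq\mathbb T^d$ (allowing $Q=\mathbb T^d$) with $\ell(Q)\ge C_0 M_{1,\kappa+1}^{-1}$,
\[
|\widehat\Omega_k^{(\kappa)}\cap Q|\le 2^{-(k-\kappa)}|Q|.
\]
Granting this, \eqref{Omega_volume_estimate} is the case $\kappa=0$, $Q=\mathbb T^d$ (the hypothesis becomes $2\pi\ge C_0M_{1,1}^{-1}$, which holds once $A$ is large since $M_{1,1}\ge A^{\gamma b}$), and \eqref{cube_intersection_volume_bound} is the case $\kappa=k_0$, where $\ell(Q)\ge C_0M_{1,k_0}^{-1}\ge C_0M_{1,k_0+1}^{-1}$ because the scales are increasing.

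The single analytic input is an equidistribution estimate for one pipe set: since $\mathcal C_{j,k'}(\rho)$ is invariant under the lattice $\tfrac{2\pi}{M_{j,k'}}\mathbb Z^d$, a count of the fundamental cells of that lattice meeting a cube $Q'$ shows that, once $C_0=C_0(d)$ is taken large enough,
\[
|\mathcal C_{j,k'}(\rho)\cap Q'|\ \le\ 2\,\frac{|\mathcal C_{j,k'}(\rho)|}{|\mathbb T^d|}\,|Q'|\qquad\text{whenever }\ \ell(Q')\ge C_0 M_{j,k'}^{-1}.
\]
Together with \eqref{pipe_volume_bound}, which yields $\sum_{j\in\mathcal J_d}|\mathcal C_{j,k'}(4\delta_0)|\le\tfrac1{10}$, this is all that is needed.

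The induction runs on $k-\kappa$. The base case $\kappa=k$ is trivial since $\widehat\Omega_k^{(k)}=\mathbb T^d$. For the step, write $\widehat\Omega_k^{(\kappa)}=\big(\bigcup_j\mathcal C_{j,\kappa+1}(3\delta_0)\big)\cap\widehat\Omega_k^{(\kappa+1)}$, so that by subadditivity $|\widehat\Omega_k^{(\kappa)}\cap Q|\le\sum_j|\mathcal C_{j,\kappa+1}(3\delta_0)\cap\widehat\Omega_k^{(\kappa+1)}\cap Q|$. For each $j$, I would cover $\mathcal C_{j,\kappa+1}(3\delta_0)\cap Q$ by the disjoint family $\{P_i\}$ of cubes of a grid with common side $\ell_*$ that meet it, with $\ell_*$ chosen so that $\ell_*\sqrt d\, M_{j,\kappa+1}\le\delta_0$ — so that $\bigcup_i P_i\subseteq\mathcal C_{j,\kappa+1}(4\delta_0)\cap Q^+$, with $Q^+$ the $\ell_*\sqrt d$-enlargement of $Q$ (and $|Q^+|\le2|Q|$ since $\ell_*\sqrt d\ll\ell(Q)$) — and, crucially, so that $\ell_*\ge C_0 M_{1,\kappa+2}^{-1}$. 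The inductive hypothesis then applies on each $P_i$ (legitimate because $\widehat\Omega_k^{(\kappa+1)}$ involves only the scales $\kappa+2,\dots,k$ and $\ell(P_i)=\ell_*\ge C_0M_{1,\kappa+2}^{-1}$), giving $|\widehat\Omega_k^{(\kappa+1)}\cap P_i|\le 2^{-(k-\kappa-1)}|P_i|$; summing and applying the equidistribution estimate to $\mathcal C_{j,\kappa+1}(4\delta_0)$ yields
\[
|\mathcal C_{j,\kappa+1}(3\delta_0)\cap\widehat\Omega_k^{(\kappa+1)}\cap Q|\ \le\ 2^{-(k-\kappa-1)}\Big|\bigcup_iP_i\Big|\ \le\ 2^{-(k-\kappa-1)}\cdot 4\,\frac{|\mathcal C_{j,\kappa+1}(4\delta_0)|}{|\mathbb T^d|}\,|Q|.
\]
Summing over $j\in\mathcal J_d$ and using $\sum_j|\mathcal C_{j,\kappa+1}(4\delta_0)|\le\tfrac1{10}$ and $|\mathbb T^d|=(2\pi)^d$ gives $|\widehat\Omega_k^{(\kappa)}\cap Q|\le 2^{-(k-\kappa-1)}\tfrac{2}{5(2\pi)^d}|Q|\le 2^{-(k-\kappa)}|Q|$, since $\tfrac{2}{5(2\pi)^d}<\tfrac12$ for $d\ge2$; a small mismatch of constants can always be absorbed by shrinking $\delta_0$, which only improves \eqref{pipe_volume_bound}.

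The crux — and the one place the assumptions on the frequency scales are used — is the compatibility of the two demands on $\ell_*$: the cubes $P_i$ must be small enough to be refined by the coarsest pipe at level $\kappa+1$ (of radius $\sim\delta_0 M_{J_d,\kappa+1}^{-1}$), yet large enough, $\gtrsim M_{1,\kappa+2}^{-1}$, for the inductive hypothesis about the \emph{finer} levels $\kappa+2,\dots,k$ to apply. The admissible window for $\ell_*$ is nonempty precisely because \eqref{N_and_M_ordering} furnishes the super-exponential gap $M_{1,\kappa+2}\ge A^{2c}M_{J_d,\kappa+1}$, so for $A$ large (depending on $C_0$ and $\delta_0$) there is ample room; a harmless technicality is to take $\ell_*$ of the form $2\pi/n$ inside this window so that the $P_i$ tile $\mathbb T^d$ exactly. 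Everything else is routine cell-counting geometry.
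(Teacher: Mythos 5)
Your proof is correct, but it takes a genuinely different route from the paper. The paper bounds $\mathbf 1_{\Omega_k}$ by a product of smooth, $2\pi/M_{1,k'}$-periodic cutoffs $\rho_{k'}$ supported in the slightly enlarged pipe sets, and then iterates the ``improved H\"older inequality'' of Modena--Sz\'ekelyhidi, so the gain of a factor $\le 1/10$ per scale comes with an error $O(M_{1,k}/M_{1,k+1})$ that is summable by the scale separation; the cube bound \eqref{cube_intersection_volume_bound} is then dispatched with the remark that one inserts an additional cutoff adapted to $Q$. You instead run a purely geometric cell-counting induction that peels off the coarsest scale: the only analytic inputs are the lattice periodicity of each cylinder (giving the equidistribution bound on cubes of side $\gtrsim M_{j,k'}^{-1}$, with the constant $2$ absorbed exactly as you say), the volume bound \eqref{pipe_volume_bound}, and the gap in \eqref{N_and_M_ordering} which guarantees the window for the intermediate grid side $\ell_*$ is nonempty for $A$ large. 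Your formulation has the advantage that the localized statement \eqref{cube_intersection_volume_bound} \emph{is} the inductive claim, so it comes for free rather than via a separate cutoff argument, and it makes explicit exactly where the frequency separation between $M_{J_d,\kappa+1}$ and $M_{1,\kappa+2}$ is used; the paper's route is shorter on the page because the improved H\"older lemma packages the equidistribution step and matches the convex-integration toolkit already cited elsewhere in the paper. One small caveat in your bookkeeping: for $d\ge 3$ all $N_{j,k}$ (hence $M_{j,k}$) coincide in $j$, so the first line of \eqref{N_and_M_ordering} should be read as the inter-$k$ separation only; your window for $\ell_*$ still exists with gap $A^{c}$ (using $M_{J_d,\kappa+1}\le N_{J_d,\kappa+1}\le A^{-c}M_{1,\kappa+2}$), so nothing breaks, but the quoted $A^{2c}$ is the $d=2$ count.
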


\begin{proof}
The proof is similar to \cite{CoiculescuPalasek2025}*{Lemma 3.3}. Recall that, by definition, $\mathcal C_{j,k}(r)$ is a $2\pi/M_{j,k}$-periodic cylinder. Since $M_{1,k}$ divides $M_{j,k}$ for all $j\in\mathcal J_d$ (by \eqref{M_definition}), the cylinders are all in fact $2\pi/M_{1,k}$-periodic. Thus we may arrange cutoff functions $\rho_k\in C^\infty(\mathbb T^d;[0,1])$ that are $2\pi/M_{1,k}$-periodic with
\begin{align*}
    \rho_k\equiv1\quad\text{on }  \bigcup_{j\in\mathcal J_d} \mathcal C_{j,k}(3\delta_0),
\end{align*}
and
\begin{align*}
    \supp\rho_k\subset\bigcup_{j\in\mathcal J_d}\mathcal C_{j,k}(4\delta_0).
\end{align*}
Due to the pointwise inequality $\mathbf1_{\Omega_k}\leq\rho_1\cdots\rho_k$, we can estimate
\begin{align*}
    |\Omega_k|\leq\|\rho_1\rho_2\cdots\rho_k\|_{L^1}.
\end{align*}
Note that by \eqref{pipe_volume_bound}, we have
\begin{align*}
    \|\rho_k\|_{L^1}\leq \sum_{j\in\mathcal J_d}|\mathcal C_{j,k}(4\delta_0)|\leq1/10.
\end{align*}
By improved H\"older's inequality (see, e.g., Lemma 2.1 in \cite{ModenaSz18}),
\begin{align*}
    \|\rho_1\rho_2\cdots\rho_{k+1}\|_{L^1}&\leq \|\rho_1\rho_2\cdots\rho_k\|_{L^1}\|\rho_{k+1}\|_{L^1}+O(M_{1,k+1}^{-1}\|\rho_1\rho_2\cdots\rho_k\|_{C^1}\|\rho_{k+1}\|_{L^1})\\
    &\leq \|\rho_1\rho_2\cdots\rho_k\|_{L^1}/10+O(M_{J_d,k}/M_{1,k+1}).
\end{align*}
Combining with the trivial estimate $\|\rho_1\|_{L^1}\leq|\mathbb T^d|$, we conclude \eqref{Omega_volume_estimate} by induction. The same argument, additionally introducing a cutoff for $Q$, yields \eqref{cube_intersection_volume_bound}.
\end{proof}

Clearly there exist cutoff functions $\chi_k\in C_c^\infty(\widetilde\Omega_{k-1})$ with $\chi_k\equiv1$ on $\Omega_{k-1}$ obeying
\begin{align} \label{eq:chi_k_bound}
    \|\nabla^n\chi_k\|_\infty\lesssim_nM_{J_d,k-1}^n.
\end{align}

\subsection{The velocity potential}\label{sec:potential}

To avoid issues related to nonlocal operators in the construction, we define the principal part of the velocity field as the Laplacian of a particular potential.

Let $\phi_k$ be a standard mollifier at length scale $\ell_k:=N_{1,k}^{-\frac12}N_{1,k+1}^{-\frac12}$. We will need $\ell_k \leq  N_{J_d,k}^{-1}$, which clearly holds provided $b$ is large enough. Define the sequence of profiles
\begin{align}
\psi_{j,0}(x)&\coloneqq N_{j,0}^{-2}a_{j,0}(x)\theta_j\sin(N_{j,0}\eta_j\cdot x)\label{def_psi_0},\\
\psi_{j,k}(x)&\coloneqq N_{j,k}^{-2}\phi_k*(a_{j,k}(x)\varphi_{j,k}(x)\theta_j\sin(N_{j,k}\eta_j\cdot x)),\qquad k\geq1,\label{def_psi}
\end{align}
where the coefficients $a_{j,k}$ are to be constructed in such a way that
\begin{equation}\label{a-convex-integration-identity}
\sum_ja_{j,k+1}^2\theta_j\otimes\theta_j=2\mathcal D \sum_j N_{j,k}\psi_{j,k}+p\Id,
\end{equation}
for some scalar function $p(x)$,
\begin{equation} \label{eq:a_bounds}
    \|\nabla^na_{j,k}\|_\infty\lesssim N_{J_d,k-1}^{n}, \qquad \forall j,k,
\end{equation}
and
\begin{equation}\label{a-support}
    \supp a_{j,k}\subset \widetilde\Omega_{k-1}.
\end{equation}
The existence of such coefficient functions $a_{j,k}$ is guaranteed by the following lemma.
\begin{lemma}
    For $j\in\mathcal J_d$ and $k\geq0$, there exist coefficient functions $a_{j,k}\in C^\infty(\mathbb T^d;\mathbb R)$ satisfying \eqref{a-convex-integration-identity} and \eqref{eq:a_bounds}. Further, $\supp\psi_{j,k}\subset \Omega_k$ and
    \begin{align}\label{eq:Dpsi_bounds}
        \|\nabla^n\psi_{j,k}\|_{L^\infty}\lesssim_n N_{j,k}^{-2+n},
    \end{align}
    for $n\geq0$.
\end{lemma}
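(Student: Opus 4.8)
The plan is to construct the $a_{j,k}$ (and simultaneously verify all the claimed properties of the $a_{j,k}$ and $\psi_{j,k}$) by induction on $k$, alternating between the identity \eqref{a-convex-integration-identity}, which produces $a_{j,k+1}$ from the $\psi_{j,k}$, and the definitions \eqref{def_psi_0}--\eqref{def_psi}, which produce $\psi_{j,k}$ from $a_{j,k}$. For the base case $k=0$, I take $a_{j,0}$ to be the constants $c_j>0$ for which $\Id=\sum_j c_j\,\theta_j\otimes\theta_j$ (available since the $\theta_j\otimes\theta_j$ generate $\Id$ with positive coefficients); then $\psi_{j,0}$ is given by \eqref{def_psi_0}, and, reading \eqref{eq:a_bounds} at level $0$ as $\|a_{j,0}\|_\infty\lesssim1$ and noting $\Omega_0=\mathbb T^d$, all asserted properties hold trivially because $\sin(N_{j,0}\eta_j\cdot x)$ and its derivatives through order $n$ are $\lesssim N_{j,0}^n$.

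The algebraic ingredient is elementary: since $(\theta_j\otimes\theta_j)_{j\in\mathcal J_d}$ is a basis of $\mathrm{Sym}^d(\mathbb R)$ whose expansion of $\Id$ has strictly positive coordinates, there exist $\epsilon_0>0$ and smooth strictly positive functions $\Gamma_j$ on $B_{\epsilon_0}(\Id)\subset\mathrm{Sym}^d(\mathbb R)$ with $R=\sum_j\Gamma_j(R)^2\,\theta_j\otimes\theta_j$ for all $R\in B_{\epsilon_0}(\Id)$, obtained by taking $\Gamma_j(R)$ to be the square root of the ($j$-th, still positive) coordinate of $R$ in this basis. The observation that makes this applicable is that although $\mathcal D\sum_j N_{j,k}\psi_{j,k}$ is \emph{not} small --- it is $O(1)$, since $\mathcal D$ is first-order and $N_{j,k}\psi_{j,k}$ oscillates at frequency $\sim N_{j,k}$ with amplitude $\sim N_{j,k}^{-1}$ --- it is nonetheless \emph{uniformly bounded}: \eqref{eq:Dpsi_bounds} with $n\le1$ gives $\|2\mathcal D\sum_j N_{j,k}\psi_{j,k}\|_{L^\infty}\le C_1$ with $C_1$ independent of $k$ (a short bootstrap is hidden here, since the implied constant in \eqref{eq:Dpsi_bounds} depends on the $a_{j,k}$ only through $\|a_{j,k}\|_\infty\lesssim\sqrt\lambda$, so $\lambda:=C_1/\epsilon_0$ may be fixed once and for all). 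With $\lambda$ so fixed, $\Id+\tfrac2\lambda\mathcal D\sum_{j'}N_{j',k}\psi_{j',k}(x)\in B_{\epsilon_0}(\Id)$ for every $x$, and I set
\[
a_{j,k+1}(x):=\chi_{k+1}(x)\,\sqrt\lambda\,\Gamma_j\!\Big(\Id+\tfrac2\lambda\mathcal D\textstyle\sum_{j'}N_{j',k}\psi_{j',k}(x)\Big)\in C^\infty(\mathbb T^d).
\]
Then $\supp a_{j,k+1}\subset\supp\chi_{k+1}\subset\widetilde\Omega_k$, which is \eqref{a-support}; and since $\chi_{k+1}\equiv1$ on $\Omega_k$, which by the inductive support claim (and locality of $\mathcal D$) contains $\supp\big(\mathcal D\sum_{j'}N_{j',k}\psi_{j',k}\big)$, one computes $\sum_j a_{j,k+1}^2\theta_j\otimes\theta_j=\lambda\chi_{k+1}^2\,\Id+\chi_{k+1}^2\cdot2\mathcal D\sum_{j'}N_{j',k}\psi_{j',k}=2\mathcal D\sum_{j'}N_{j',k}\psi_{j',k}+p\,\Id$ with $p:=\lambda\chi_{k+1}^2$, which is \eqref{a-convex-integration-identity}. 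Finally \eqref{eq:a_bounds} at level $k+1$ follows from the Fa\`a di Bruno and Leibniz formulas: the derivatives of $\Gamma_j$ are bounded on $\overline{B_{\epsilon_0}(\Id)}$, the $n$-th derivative of $\Id+\tfrac2\lambda\mathcal D\sum N\psi$ is $\lesssim N_{J_d,k}^n$ by \eqref{eq:Dpsi_bounds}, and $\|\nabla^n\chi_{k+1}\|_\infty\lesssim M_{J_d,k}^n\le N_{J_d,k}^n$ by \eqref{eq:chi_k_bound}.

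It remains to propagate the estimates on the $\psi_{j,k}$ from \eqref{eq:a_bounds} and \eqref{a-support}. Write $\psi_{j,k}=N_{j,k}^{-2}\phi_k*g_{j,k}$ with $g_{j,k}:=a_{j,k}\varphi_{j,k}\theta_j\sin(N_{j,k}\eta_j\cdot x)$ (omitting $\phi_k$ and $\varphi_{j,k}$ when $k=0$). By \eqref{N_and_M_ordering} one has $N_{J_d,k-1}\le M_{j,k}\ll N_{j,k}$, so \eqref{eq:a_bounds} and \eqref{eq:varphi_bound} give $\|\nabla^\ell(a_{j,k}\varphi_{j,k})\|_\infty\lesssim M_{j,k}^\ell$, hence $\|\nabla^n g_{j,k}\|_\infty\lesssim\sum_{\ell=0}^n M_{j,k}^\ell N_{j,k}^{n-\ell}\lesssim N_{j,k}^n$; since $\|\phi_k\|_{L^1}=1$, convolution does not enlarge the $L^\infty$ norm, so \eqref{eq:Dpsi_bounds} holds. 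For the support, $\supp g_{j,k}\subset\widetilde\Omega_{k-1}\cap\mathcal C_{j,k}(\delta_0)$ and convolution with $\phi_k$ fattens this set by the mollification scale $\ell_k$; from $\ell_k\ll N_{J_d,k}^{-1}$ together with \eqref{N_and_M_ordering} one gets $\ell_k\ll M_{j,k}^{-1}$ and $\ell_k\ll M_{J_d,k-1}^{-1}$, and these are precisely the margins built into the radii $(3-2^{-(k-k')})\delta_0$ versus $(3-\tfrac34 2^{-(k-1-k')})\delta_0$: the $\ell_k$-neighborhood of $\mathcal C_{j,k}(\delta_0)$ lies in $\mathcal C_{j,k}(2\delta_0)$, and for $k'\le k-1$ the $\ell_k$-neighborhood of $\mathcal C_{j',k'}((3-\tfrac34 2^{-(k-1-k')})\delta_0)$ lies in $\mathcal C_{j',k'}((3-2^{-(k-k')})\delta_0)$, so intersecting over $k'\le k$ yields $\supp\psi_{j,k}\subset\Omega_k$. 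This closes the induction. The two points that actually require care --- the rest being routine --- are that \eqref{a-convex-integration-identity} is closed not by a \emph{small} stress but by a large pressure term $p=\lambda\chi_{k+1}^2$, which is what keeps the algebraic lemma applicable with $k$-independent constants, and the support bookkeeping, which depends on the precisely tuned radii defining $\Omega_k,\widetilde\Omega_k$ and on the scale separation $\ell_k\ll M_{J_d,k-1}^{-1}$.
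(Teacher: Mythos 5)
Your construction is essentially the paper's: the same recursive definition $a_{j,k+1}=c^{-1/2}\chi_{k+1}\Gamma_j(\Id+cS_k)$ with $S_k=2\mathcal D\sum_j N_{j,k}\psi_{j,k}$ (your $\lambda$ playing the role of $c^{-1}$, and the large multiple of $\Id$ absorbed into the pressure $p$), the same Fa\`a di Bruno/Leibniz derivative estimates closed inductively through the scale separation $N_{J_d,k-1}\leq M_{j,k}\ll N_{j,k}$, and the same support bookkeeping via $\ell_k\ll M_{j,k}^{-1},M_{J_d,k-1}^{-1}$; your ``hidden bootstrap'' for fixing $\lambda$ is exactly the role played by the paper's preliminary rough estimates, and it closes for the reason you give (the constant in \eqref{eq:Dpsi_bounds} for $n\leq1$ depends on $a_{j,k}$ only through $\|a_{j,k}\|_\infty\lesssim\sqrt{\lambda}$ up to terms suppressed by $M_{j,k}/N_{j,k}$, so $\lambda$ can be fixed first and $A$ afterwards). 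One caveat concerning compatibility with the rest of the paper rather than with the lemma itself: the paper takes $a_{1,0}=1$ and $a_{j,0}=0$ for $j\neq1$, so that $\bar v_0$ is a single shear flow with $\mathbb P\div(\bar v_0\otimes\bar v_0)=0$, a fact used in the proof of Proposition~\ref{prop_f_estimate}, whereas your base case with all coefficients $c_j>0$ makes $\bar v_0$ a superposition of shear flows in different directions whose $O(1)$ cross-interactions would not be absorbed into the small forcing $f$ later on.
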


\begin{proof}
    Recall the well-known rank-one decomposition
    \begin{align*}
        S=\sum_{j\in\mathcal J_d}\Gamma_j^2(S)\theta_j\otimes\theta_j,
    \end{align*}
    for all $S\in \mathrm{Sym}^{d}(\mathbb R)$ in a $c_0$-neighborhood of $\Id$, where $\Gamma_j:B(\Id,c_0)\to\mathbb R$ are smooth.
    
We define $a_{1,0}(x)=1$, $a_{j,0}(x)=0$ for $j\neq1$, and, recursively, for $k\geq 0$,
\[
a_{j,k+1}(x)=c^{-\frac12}\chi_{k+1}(x)\Gamma_j \big(\Id+cS_k(x)\big),
\]
where
\[
S_k\coloneqq2\mathcal D \sum_j N_{j,k}\psi_{j,k},
\]
and $\psi_{j,k}$ is defined in \eqref{def_psi_0}--\eqref{def_psi} in terms of $a_{j,k}$. The absolute constant $c$ will be chosen 
small enough so that
\begin{equation} \label{eq:condition_on_c}
c\|S_k\|_\infty \leq c_0,
\end{equation}
holds for all $k$, and hence the rank-one decomposition can be applied. Note that
\[
\|a_{j,k}\|_\infty \lesssim c^{-\frac12},
\]
for all $j$ and $k$. We can also get rough estimates on the derivatives of $\psi_{j,k}$ by shifting all the derivatives onto the mollifier: 
\[
\|\nabla^n \psi_{j,k}\|_\infty \lesssim N^{-2}_{j,k} \ell^{-n}_k  \|a_{j,k}\|_\infty \lesssim N^{-2}_{j,k} \ell^{-n}_k  c^{-\frac12}.
\]
Then
\[
\|\nabla^n S_k\|_\infty \lesssim \sum_j N_{j,k}\|\nabla^{n+1} \psi_{j,k}\|_\infty \lesssim N^{-1}_{1,k} \ell_k^{-n-1} c^{-\frac12}.
\]

Before we obtain optimal bounds on the derivatives, we continue with rough estimates. Using the Fa\`a di Bruno formula,
\begin{align*}
\|\nabla^n_x\Gamma_j(\Id+cS_k(x))\|_\infty &\lesssim_n \sum_{i=0}^n\sum_{\alpha_1+\cdots+\alpha_i=n}c^i\|\nabla^{\alpha_1} S_k\|_\infty\cdots\|\nabla^{\alpha_i} S_k\|_\infty \|\nabla^i\Gamma_j\|_{L^\infty}\\
&\lesssim \sum_{i=0}^n\sum_{\alpha_1+\cdots+\alpha_i=n}c^i(N_{1,k}^{-1}\ell_k^{-\alpha_1-1}c^{-\frac12})\cdots(N_{1,k}^{-1}\ell_k^{-\alpha_i-1}c^{-\frac12})\\
&\lesssim \sum_{i=0}^nc^\frac i2 N_{1,k}^{-i}\ell_k^{-n-i}\\
&\lesssim c^{\frac{n}2}N^{-n}_{1,k} \ell_k^{-2n}.
\end{align*}
Since $M_{J_d,k} \leq N_{J_d,k} \leq \ell_k^{-1}$, we may neglect the terms in which derivatives fall on $\chi_{k+1}$, which yields
\[
\begin{split}
\|\nabla^n a_{j,k+1}\|_\infty &= \big\|\nabla^n \Big(c^{-\frac12}\chi_{k+1}(x)\Gamma_j\big(\Id+cS_k(x)\big)\Big)\big\|_\infty\\
&\lesssim_n c^{-\frac12} \|\nabla^n_x\Gamma_j(\Id+cS_k(x))\|_\infty\\
&\lesssim_n c^{\frac{n-1}{2}}N^{-n}_{1,k} \ell_k^{-2n}.
\end{split}
\]
Now we are in a position to obtain optimal bounds on the derivatives of $S_k$. Since $\ell_{k-1}^{-1}  =  N_{1,k-1}^{\frac12}N_{1,k}^{\frac12}$ and $M_{j,k} \leq  N_{j,k}$,
\[
\begin{split}
\|\nabla^{n} \psi_{j,k}\|_\infty&\lesssim_n N_{j,k}^{-2} \sum_{i=0}^{n} \|\nabla^i(a_{j,k}\varphi_{j,k})\|_\infty N_{j,k}^{n-i}\\
&\lesssim_n  N_{j,k}^{-2}\sum_{i=0}^n (c^{\frac{i-1}{2}}N^{-i}_{1,k-1} \ell_{k-1}^{-2i} + c^{-\frac12} M_{j,k}^i)N_{j,k}^{n-i}\\
&\lesssim_n N_{j,k}^{-2} c^{-\frac{1}{2}} N_{j,k}^{n},
\end{split}
\]
which in turn improves the estimate of $S_k$:
\[
\begin{split}
\|\nabla^nS_k\|_\infty &\lesssim \sum_j N_{j,k}\|\nabla^{n+1} \psi_{j,k}\|_\infty\\
&\lesssim \sum_j N_{j,k} N_{j,k}^{-2} c^{-\frac{1}{2}}  N_{j,k}^{n+1}\\
&\lesssim \sum_j c^{-\frac{1}{2}}  N_{j,k}^{n}\\
&\lesssim c^{-\frac{1}{2}}  N_{J_d,k}^{n},
\end{split}
\]
and also on $a$:
\[
\begin{split}
\|\nabla^n a_{j,k+1}\|_\infty &= \big\|\nabla^n \Big(c^{-\frac12}\chi_{k+1}(x)\Gamma_j\big(\Id+cS_k(x)\big)\Big)\big\|_\infty\\
&\lesssim_n c^{-\frac12} \|\nabla^n_x\Gamma_j(\Id+cS_k(x))\|_\infty\\
&\lesssim_n N^{n}_{J_d,k}.
\end{split}
\]
Thus bounds \eqref{eq:a_bounds} and \eqref{eq:Dpsi_bounds} hold.

The inclusion \eqref{a-support} is immediate from the definition of the cutoff $\chi_{k+1}$.

    Next, we compute
    \begin{align*}
        \sum_ja_{j,k+1}^2\theta_j\otimes\theta_j&=c^{-1}\chi_{k+1}^2(x) \sum_j\Gamma_j^2\big(\Id+cS_k(x)\big)\theta_j\otimes\theta_j\\
        &=\chi_{k+1}^2(x)(c^{-1}\Id+S_k(x)),
    \end{align*}
    which verifies \eqref{a-convex-integration-identity}, as long as $\chi_{k+1}\equiv1$ on $\supp S_k$. Indeed, recall that by definition $\chi_{k+1}\equiv1$ on $\Omega_k$; thus it suffices to verify that $\psi_{j,k}$, and therefore $S_k$, are supported in $\Omega_k$. We have
    \begin{align*}
        \supp\psi_{j,k}\subset (\supp{a_{j,k}}\cap\supp\varphi_{j,k})+B(0,\ell_k),
    \end{align*}
    the sum being taken in the Minkowski sense, as a result of the convolution. From \eqref{a-support}, the fact that $\supp\varphi_{j,k}\subset\mathcal C_{j,k}(\delta_0)$, and the definitions of $\Omega_k$ and $\widetilde\Omega_k$, it is easy to verify that this is contained in $\Omega_k$, upon choosing parameters such that $\ell_kM_{j,k'}\ll2^{-(k-k')}$ for all $j$ and $k'\leq k$. 
\end{proof}

We define, for $k\geq 0$,
\[
\bar v_k(x,t)= \sum_j \bar v_{j,k}(x,t) = \sum_j -N_{j,k} e^{-N_{j,k}
^2t} \Delta \psi_{j,k}(x),
\]
and
\[
v_k(x,t) = -\int_0^{t} e^{(t-s)\Delta }\mathbb P\div (\bar v_{k+1}\otimes \bar v_{k+1})(s) \, ds.
\]

Then the full principal part is
\begin{align*}
    v(x,t)=\sum_{k=0}^\infty v_k(x,t).
\end{align*}
We analogously define the approximate principal part,
\begin{align*}
    \bar v(x,t)=\sum_{k=0}^\infty \bar v_k(x,t).
\end{align*}

We can also define tensor fields $R_k$ and $\bar R_k$ such that $v_k=\div R_k$ and $\bar v_k=\div\bar R_k$.

Finally we define the tensor fields 
\[
\bar R_k(x,t)= \sum_j \bar R_{j,k}(x,t) = \sum_j -N_{j,k} e^{-N_{j,k}
^2t}  \newD  \psi_{j,k}(x),
\]
and
\[
R_k(x,t) = -\int_0^{t} e^{(t-s)\Delta }\mathcal R\mathbb P\div (\bar v_{k+1}\otimes \bar v_{k+1})(s) \, ds,
\]
recalling the definitions from Section~\ref{sec:notation}.

\section{Estimates on the leading order solution}\label{sec:est}
In this section we show that the principal part $v$ and the approximate principal part $\bar v$ are indeed close. As a consequence we show that $v$ satisfies a forced Navier--Stokes system with the forcing term appropriately small.

\subsection{Error bounds on the velocity}

We start with the following estimate for $\bar R_k$. There are two time regimes separated by a time scale $t_k$ which can be chosen arbitrarily in 
\begin{equation}\label{time-sequence}
N_{1, k+1}^{-2} \ll t_k \ll N_{J_d,k}^{-3}.
\end{equation}
For concreteness, let us define $t_k\coloneqq N_{J_d,k}^{-4}$.
\begin{proposition}\label{barv-I_estimate_proposition}
Let
\[
\mathcal{I}_k = -\int_0^{t} e^{(t-s)\Delta }\sum_j N_{j,k+1}^2 e^{-2N_{j,k+1}
^2s} \mathbb Q (a_{j,k+1}^2 \theta_j\otimes \theta_j) \, ds.
\]
Then for any $\varepsilon_0 >0$, $\alpha \in(0, \frac{1}{10})$, and $\bar n \in \mathbb{N}$, the following bounds hold for all $t\geq 0$:
\[
\|\nabla^n(\bar R_k(t) - \mathcal{I}_k(t))\|_{L^\infty} \leq \varepsilon_0 N^{-\alpha}_{1,k}(t^{-\frac{n}{2} +\alpha}+1), \qquad  n=1,2,\dots,\bar n,
\]
\[
\|\bar R_k(t) - \mathcal{I}_k(t)\|_{L^\infty} \lesssim_\varepsilon \varepsilon_0N_{1,k}^{-\alpha}+\mathbbm{1}_{t\leq t_k}N_{J_d, k}^{\varepsilon}, \qquad \text{if} \qquad d=2,
\]
\[
\|\bar R_k(t) - \mathcal{I}_k(t)\|_{L^\infty} \lesssim \varepsilon_0N_{1,k}^{-\alpha}+\mathbbm1_{t\leq t_k}, \qquad \text{if} \qquad d\geq 3,
\]
provided $A$ and $b$ are large enough.
\end{proposition}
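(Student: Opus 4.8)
The plan is to exhibit $\bar R_k$ and $\mathcal I_k$ as two approximations of a single object, namely $-2e^{t\Delta}\nabla\odot\mathbb P\sum_j N_{j,k}\psi_{j,k}$, and to estimate the two discrepancies separately. I would split the time axis at $t_k$: on $[t_k,T]$ the Gaussian weights $N_{j,k+1}^2e^{-2N_{j,k+1}^2s}$ in $\mathcal I_k$ have already ``saturated'' (their integral over $[0,t]$ equals $\tfrac12$ up to an error $\lesssim e^{-N_{1,k+1}^2t_k}$, negligible by \eqref{time-sequence}), whereas on $[0,t_k]$ the low-mode component has not yet formed and one bounds both pieces crudely.

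\textbf{Algebraic reduction.} The crucial identity is obtained by applying $\mathbb Q$ to the convex integration identity \eqref{a-convex-integration-identity}: since $\mathbb Q(p\Id)=2\Delta^{-1}\nabla\odot\mathbb P\nabla p=0$ and $\mathbb Q\mathcal D=2\nabla\odot\mathbb P$ by \eqref{eq:Q_identities},
\[
\sum_j\mathbb Q\big(a_{j,k+1}^2\,\theta_j\otimes\theta_j\big)=4\,\nabla\odot\mathbb P\sum_j N_{j,k}\psi_{j,k}.
\]
This cancellation is essential: although the individual summands $\mathbb Q(a_{j,k+1}^2\theta_j\otimes\theta_j)$ are controlled only up to Calder\'on--Zygmund logarithmic losses and carry Fourier content at all scales $\lesssim N_{J_d,k}$ (from the cutoffs inside $a_{j,k+1}$), the sum is localized in the thin annuli around the radii $\{N_{j,k}\}$ and obeys $\|\sum_j\mathbb Q(a_{j,k+1}^2\theta_j\otimes\theta_j)\|_{L^\infty}\lesssim\sum_jN_{j,k}\|\nabla\psi_{j,k}\|_{L^\infty}\lesssim1$ by \eqref{eq:Dpsi_bounds}. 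On the other side, $\newD=\mathbb Q\mathcal D+(2\nabla\otimes\nabla/\Delta-\Id)\div$ reduces $\bar R_k$ to the same object up to the term $\sum_j N_{j,k}e^{-N_{j,k}^2t}(2\nabla\otimes\nabla/\Delta-\Id)\div\psi_{j,k}$; but $\theta_j\cdot\eta_j=0$ and $\theta_j\cdot\nabla\varphi_{j,k}=0$ force $\div\psi_{j,k}=N_{j,k}^{-2}\phi_k*\big((\theta_j\cdot\nabla a_{j,k})\varphi_{j,k}\sin(N_{j,k}\eta_j\cdot x)\big)$, so by \eqref{eq:a_bounds} this term is $\lesssim\sum_jN_{j,k}^{n-1}N_{J_d,k-1}e^{-N_{j,k}^2t}$ after $\nabla^n$, which is absorbed by the right-hand side since $N_{J_d,k-1}\ll N_{1,k}$ by \eqref{N_and_M_ordering} and $\alpha<\tfrac1{10}$ (compare at the critical time $t\sim N_{j,k}^{-2}$).

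\textbf{Heat-semigroup approximations on $[t_k,T]$.} Here I would replace $-\int_0^te^{(t-s)\Delta}N_{j,k+1}^2e^{-2N_{j,k+1}^2s}\,ds$ by $-\tfrac12e^{t\Delta}$: the difference is the saturation error above, plus $\int_0^tN_{j,k+1}^2e^{-2N_{j,k+1}^2s}(e^{(t-s)\Delta}-e^{t\Delta})\,ds$, which gains a factor $\lesssim sN_{J_d,k}^2\lesssim(N_{J_d,k}/N_{1,k+1})^2$ on functions band-limited to $\lesssim N_{J_d,k}$ (the weight forcing $s\lesssim N_{j,k+1}^{-2}$), small enough to absorb the logarithmic individual-term bounds. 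Combined with the algebraic reduction this gives $\mathcal I_k(t)\approx-2e^{t\Delta}\nabla\odot\mathbb P\sum_jN_{j,k}\psi_{j,k}$, which must then be matched with $\bar R_k(t)\approx-\sum_jN_{j,k}e^{-N_{j,k}^2t}\,2\nabla\odot\mathbb P\psi_{j,k}$. The remaining comparison is therefore $e^{t\Delta}\psi_{j,k}$ versus $e^{-N_{j,k}^2t}\psi_{j,k}$: since $\psi_{j,k}$ has Fourier support in an annulus of radius $N_{j,k}$ and width $\lesssim M_{j,k}$, the multiplier $e^{-t|\xi|^2}-e^{-tN_{j,k}^2}$ is $\lesssim\min(1,tN_{j,k}M_{j,k})\,e^{-ctN_{j,k}^2}$ there, so after $\nabla^n$, $\nabla\odot\mathbb P$ and the prefactor $N_{j,k}$ the error is $\lesssim\sum_jN_{j,k}^{n+1}\,t\,M_{j,k}\,e^{-ctN_{j,k}^2}$. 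One must keep the Gaussian factor intact and compare with $\varepsilon_0N_{1,k+1}^{-\alpha}(t^{-n/2+\alpha}+1)$ at $t\sim N_{j,k}^{-2}$, where the requirement becomes $M_{j,k}N_{j,k}^{2\alpha-1}N_{1,k+1}^\alpha\lesssim\varepsilon_0$; this holds because the spectral gaps $M_{j,k}\ll N_{j,k}\ll N_{1,k+1}$ (with $\gamma$ taken sufficiently close to its lower bound) make $M_{j,k}/N_{j,k}$ beat $N_{1,k+1}^{-\alpha}N_{j,k}^{-2\alpha}$ once $\alpha<\tfrac1{10}$.

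\textbf{The short interval and the main obstacle.} On $[0,t_k]$ one uses $\|\nabla^n\bar R_k(t)\|_{L^\infty}\lesssim\sum_jN_{j,k}^ne^{-N_{j,k}^2t}$ together with $\|\nabla^n\mathcal I_k(t)\|_{L^\infty}\lesssim\min(t^{-n/2},N_{J_d,k}^n)\sum_j\|\mathbb Q(a_{j,k+1}^2\theta_j\otimes\theta_j)\|_{L^\infty}$, the last factor being $\lesssim N_{J_d,k}^\varepsilon$ for $d=2$ (Calder\'on--Zygmund together with an improved H\"older inequality at the closely spaced scales $N_{j,k}$) and $\lesssim1$ for $d\geq3$; since $t\leq t_k=N_{J_d,k}^{-4}$ forces $N_{j,k}^2t\ll1$ and $t^{-n/2}\leq N_{J_d,k}^n(N_{J_d,k}^4t)^{-n/2}$, both are absorbed into $\varepsilon_0N_{1,k+1}^{-\alpha}(t^{-n/2+\alpha}+1)$ for $n\geq1$ and into $\mathbbm1_{t\leq t_k}N_{J_d,k}^\varepsilon$ (resp.\ $\mathbbm1_{t\leq t_k}$) for $n=0$, again using $\alpha<\tfrac1{10}$. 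I expect the genuinely delicate step to be this last matching---the uniform-in-$t$ control of the discrepancy between the exact semigroup $e^{t\Delta}$ and the scalar Gaussian factors $e^{-N_{j,k}^2t}$ with the correct heat-smoothing weights $t^{-n/2}$---which is exactly what forces the precise interplay among $b$, $\gamma$, $\alpha$ and the double-exponential growth of the frequency scales; everything else (the algebraic identities, the divergence-free geometry of the building blocks, and the separation of time scales) is bookkeeping once that is in hand.
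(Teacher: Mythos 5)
Your proposal follows essentially the same route as the paper: the identity obtained by applying $\mathbb Q$ to \eqref{a-convex-integration-identity} (with the $\div\psi_{j,k}$ correction from trading $\nabla\odot\mathbb P$ for $\newD$, killed by $\theta_j\cdot\eta_j=0$ and $\theta_j\cdot\nabla\varphi_{j,k}=0$), the time-splitting at $t_k$ with saturation of the Gaussian weight and a propagator-shift error of size $t_kN_{J_d,k}^{2}$, the crude bounds on $[0,t_k]$ with the $d=2$ H\"older loss versus the $d\geq3$ improvement via the summed local operator $\mathcal R\Delta$ (valid there because all $N_{j,k+1}$ coincide, so the time weight is $j$-independent), and absorption using $t\leq t_k=N_{J_d,k}^{-4}$. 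The one step where your sketch is not literally correct is the comparison of $e^{t\Delta}\psi_{j,k}$ with $e^{-N_{j,k}^2t}\psi_{j,k}$: $\psi_{j,k}$ is \emph{not} Fourier-supported in an annulus of width $\lesssim M_{j,k}$ (smooth amplitudes and mollification give concentration with rapidly decaying tails, not compact support), so the multiplier bound restricted to that annulus cannot be invoked as stated; the paper replaces this by the heat-semigroup commutator estimate (Lemma~\ref{l:commutator}), which yields your main term $\sim M_{j,k}N_{j,k}^{-1}e^{-N_{j,k}^2t/4}$ plus a non-decaying tail $\sim(M_{j,k}/N_{j,k})^m$ that must be beaten by taking $m$ large. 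Since your claimed bound coincides with this output up to that tail, the discrepancy is one of rigor rather than strategy, and the rest of the argument goes through as in the paper.
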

\begin{proof}
First we obtain a rough estimate which we will use on the time interval $[0,t_k]$.
\[
\begin{split}
\|\nabla^n\mathcal{I}_k\|_{L^\infty} &\lesssim \int_0^{t} \sum_j N_{j,k+1}^2 e^{-2N_{j,k+1}
^2s} \|\mathbb Q \nabla^n (a_{j,k+1}^2(x) \theta_j\otimes \theta_j)\|_{L^\infty} \, ds,\\
&\lesssim_\varepsilon \int_0^{t} \sum_j N_{j,k+1}^2 e^{-2N_{j,k+1}
^2s} \|\nabla^{n}(a_{j,k+1}^2)\|_{C^\varepsilon} \, ds,\\
&\lesssim_\varepsilon N_{J_d, k}^{n+\varepsilon},
\end{split}
\]
for any $\varepsilon>0$. In the case $d\geq 3$ the above bound can be improved. Indeed, in this case $N_{j,k}=N_{1,k}$ for all $j$. Then the summation in $j$ becomes, by \eqref{a-convex-integration-identity},
\[
\begin{split}
\nabla^n\mathbb Q\sum_j (a_{j,k+1}^2 \theta_j\otimes \theta_j) &=
\nabla^n\mathcal R\mathbb P\div \left(2\mathcal D \sum_j N_{1,k}\psi_{j,k}+p\Id\right)\\
&=2\nabla^n\mathcal R \Delta \mathbb P \sum_j N_{1,k}\psi_{j,k}\\
&=2\nabla^n\mathcal R \Delta \mathbb P \sum_j N_{1,k}^{-1}\phi_k*(a_{j,k}(x)\varphi_{j,k}(x)\theta_j\sin(N_{1,k}\eta_j\cdot x))\\
&=2\nabla^n\mathcal R \Delta \sum_j N_{1,k}^{-1}\phi_k*(a_{j,k}\varphi_{j,k}\theta_j\sin(N_{1,k}\eta_j\cdot x))\\
&\quad - 2\nabla^n\mathcal R \nabla \sum_j N_{1,k}^{-1}\phi_k*(\nabla(a_{j,k}\varphi_{j,k})\cdot \theta_j\sin(N_{1,k}\eta_j\cdot x)),
\end{split}
\]
which we can estimate without any loss in $L^\infty$ since $\mathcal R\Delta$ is a local operator: 
\[
\Big\|\nabla^n\mathbb Q\sum_j (a_{j,k+1}^2 \theta_j\otimes \theta_j)\Big\|_\infty \lesssim N_{J_d, k}^n.
\]
Hence for $d\geq 3$ we obtain
\[
\begin{split}
\|\nabla^n\mathcal{I}_k\|_\infty &\lesssim \int_0^{t}  N_{1,k+1}^2 e^{-2N_{1,k+1}^2s} \Big\| \nabla^n\mathbb Q \sum_j(a_{j,k+1}^2 \theta_j\otimes \theta_j)\Big\|_\infty \, ds\\
&\lesssim \int_0^{t}  N_{1,k+1}^2 e^{-2N_{1,k+1}^2s} N_{J_d, k}^n \, ds\\
&\lesssim N_{J_d, k}^n.
\end{split}
\]

Since we also have
\[
\|\nabla^n\bar R_k(t)\|_\infty\lesssim \sum_j N_{j,k}^{n}e^{-N_{j,k}^2t}\lesssim N_{J_d,k}^{n},
\]
we obtain the desired bounds for $t\in [0,t_k]$:
\begin{equation} \label{eq:bound_before_t_k_2D}
\|\nabla^n(\bar R_k(t) - \mathcal{I}_k(t))\|_\infty  \lesssim_\varepsilon N_{J_d, k}^{n+\varepsilon}, \qquad \text{if} \qquad d=2,
\end{equation}
and
\begin{equation} \label{eq:bound_before_t_k_3D}
\|\nabla^n(\bar R_k(t) - \mathcal{I}_k(t))\|_\infty  \lesssim_\varepsilon N_{J_d, k}^n, \qquad \text{if} \qquad d\geq 3.
\end{equation}

For $t\geq t_k$ we will get improved estimates. We split the integral $\mathcal{I}_k$ into
\begin{equation}\notag
\begin{split}
\mathcal{I}_k(t) &=-\int_0^{t_k} e^{(t-s)\Delta }\sum_j N_{j,k+1}^2 e^{-2N_{j,k+1}
^2s} \mathbb Q (a_{j,k+1}^2(x) \theta_j\otimes \theta_j) \, ds\\
&\quad-\int_{t_k}^t e^{(t-s)\Delta }\sum_j N_{j,k+1}^2 e^{-2N_{j,k+1}
^2s} \mathbb Q (a_{j,k+1}^2(x) \theta_j\otimes \theta_j) \, ds\\
&=:\mathcal{I}_k^{<t_k} + \mathcal{I}_k^{>t_k}.
\end{split}
\end{equation}
We shall show that the first term is close to $\bar R_k$ while the second one being a lower order error term. More precisely, define
\[
\tilde{\mathcal{I}}_k^{<t_k} = -e^{(t-t_k)\Delta}\int_0^{t_k} \sum_j N_{j,k+1}^2 e^{-2N_{j,k+1}
^2s} \mathbb Q  \left(a_{j,k+1}^2(x) \theta_j\otimes \theta_j\right) \, ds.
\]
Note that, thanks to \eqref{eq:a_bounds},
\[
\begin{split}
\| \nabla^n(e^{t\Delta} -\Id) \mathbb Q (a^2_{j,k+1}\theta_j\otimes \theta_j) \|_\infty &\lesssim t \| \nabla^n\Delta \mathbb Q (a^2_{j,k+1}\theta_j\otimes \theta_j)\|_{L^\infty}\\
& \lesssim_\varepsilon t N_{J_d,k}^{n+2+\varepsilon }.
\end{split}
\]
Then we estimate, for $t \geq t_k$,
\[
\begin{split}
\| \nabla^n(\mathcal{I}_k^{<t_k} - \tilde{\mathcal{I}}_k^{<t_k})\|_\infty &=
\Big\|e^{(t-t_k)\Delta}\int_0^{t_k} e^{(t_k-s)\Delta }\sum_j N_{j,k+1}^2 e^{-2N_{j,k+1}
^2s} \mathbb Q \left(a_{j,k+1}^2(x) \theta_j\otimes \theta_j\right) \, ds \\
& \quad -e^{(t-t_k)\Delta}\int_0^{t_k} \sum_j N_{j,k+1}^2 e^{-2N_{j,k+1}
^2s} \mathbb Q \left(a_{j,k+1}^2(x) \theta_j\otimes \theta_j\right) \, ds\Big\|_{L^\infty}\\
& \lesssim_\varepsilon \int_0^{t_k} \sum_j N_{j,k+1}^2 e^{-2N_{j,k+1}
^2s}  t_k N_{J_d,k}^{n+2+\varepsilon}  \, ds\\
&\lesssim t_k N_{J_d,k}^{n+2+\varepsilon},
\end{split}
\]
where we used the fact that $\|e^{t\Delta}f\|_{L^\infty}\leq \|f\|_{L^\infty}$. 

On the other hand, using \eqref{eq:Q_identities},
\begin{align}\label{eq:quadratic_identity}
\frac{1}{2}\sum_j  \mathbb Q \left(a_{j,k+1}^2(x) \theta_j\otimes \theta_j\right) &= 2\sum_j N_{j,k} \nabla\odot\mathbb P \psi_{j,k}\nonumber\\
&= \sum_j N_{j,k} ( \newD  \psi_{j,k}+(\Id-2\Delta^{-1}\nabla\otimes\nabla)\div\psi_{j,k}),
\end{align}
by \eqref{a-convex-integration-identity}, we have
\[
\begin{split}
\tilde{\mathcal{I}}_k^{<t_k} &=-e^{(t-t_k)\Delta}\int_0^{t_k} \sum_j N_{j,k+1}^2 e^{-2N_{j,k+1}
^2s} \mathbb Q  \left(a_{j,k+1}^2(x) \theta_j\otimes \theta_j\right) \, ds\\
&=-\frac{1}{2}
e^{(t-t_k)\Delta}\sum_j (1- e^{-2N_{j,k+1}
^2t_k} ) \mathbb Q \left(a_{j,k+1}^2(x) \theta_j\otimes \theta_j\right)\\
&=-\frac{1}{2}e^{(t-t_k)\Delta} \sum_j  \mathbb Q \left(a_{j,k+1}^2(x) \theta_j\otimes \theta_j\right) +\frac{1}{2}e^{(t-t_k)\Delta}\sum_j e^{-2N_{j,k+1}
^2t_k} \mathbb Q \left(a_{j,k+1}^2(x) \theta_j\otimes \theta_j\right)\\
&=-\sum_j N_{j,k} e^{(t-t_k)\Delta} \newD  \psi_{j,k} + \frac{1}{2}e^{(t-t_k)\Delta}\sum_j e^{-2N_{j,k+1}
^2t_k} \mathbb Q \left(a_{j,k+1}^2(x) \theta_j\otimes \theta_j\right)\\
&\qquad-(\Id-2\frac{\nabla\otimes\nabla}{\Delta})p\\
&= -\sum_j N_{j,k} e^{-N_{j,k}^2t} \newD  \psi_{j,k} - \sum_j N_{j,k} (e^{-N_{j,k}^2(t-t_k)}-e^{-N_{j,k}^2t}) \newD \psi_{j,k} \\
&\qquad + \sum_j N_{j,k} (e^{-N_{j,k}^2(t-t_k)}-e^{(t-t_k)\Delta}) \newD  \psi_{j,k} \\
&\qquad + \frac{1}{2}e^{(t-t_k)\Delta}\sum_j e^{-2N_{j,k+1}
^2t_k} \mathbb Q \left(a_{j,k+1}^2(x) \theta_j\otimes \theta_j\right) -(\Id-2\frac{\nabla\otimes\nabla}{\Delta})p\\
&= \bar R_k + \tilde{\mathcal{I}}_k^1 +\tilde{\mathcal{I}}_k^2 + \tilde{\mathcal{I}}_k^3 -(\Id-2\frac{\nabla\otimes\nabla}{\Delta})p.
\end{split}
\]
The term $\tilde{\mathcal{I}}_k^2$ can be expressed as the commutator
\[\tilde{\mathcal{I}}_k^2= - \phi_k*\sum_j N_{j,k}^{-1}  \newD [e^{(t-t_k)\Delta},a_{j,k}\varphi_{j,k}](\theta_j\sin(N_{j,k}\eta_j\cdot x))\]
between the heat kernel with multiplication by the coefficients. Meanwhile, the term with $p$ results from \eqref{eq:quadratic_identity}; thus
\begin{align*}
    -p&=\sum_jN_{j,k}e^{(t-t_k)\Delta}\div\psi_{j,k}\\
    &=\phi_k*\sum_jN_{j,k}^{-1}e^{(t-t_k)\Delta}\big(\theta_j\cdot\nabla(a_{j,k}\varphi_{j,k})\theta_j\sin(N_{j,k}\eta_j\cdot x)\big).
\end{align*}
By \eqref{heat-decay-estimate} and the fact that $\Delta^{-1}\nabla\otimes\nabla$ is Calder\'on--Zygmund, we have
\begin{align*}
    &\|\nabla^n (\Id-2\frac{\nabla\otimes\nabla}{\Delta})p\|_\infty\\
    &\qquad\lesssim \sum_j \Big(\|\nabla(a_{j,k}\varphi_{j,k})\|_\infty N_{j,k}^{n-1+\varepsilon}e^{-N_{j,k}^2(t-t_k)/4}+ N_{j,k}^{n-m+\varepsilon}\|\nabla^{m}(a_{j,k}\varphi_{j,k})\|_\infty\Big)\\
    &\qquad\lesssim \sum_j\big(M_{j,k}N_{j,k}^{n-1+\varepsilon}e^{-N_{j,k}^2(t-t_k)/4}+N_{j,k}^{n-m+\varepsilon}M_{j,k}^{m}\big),
\end{align*}
which is small enough after taking $m$ sufficiently large.

We continue to show that $\tilde{\mathcal{I}}_k^1$--$\tilde{\mathcal{I}}_k^3$ are lower order terms. First,
\begin{equation}
\begin{split}
\|\nabla^n\tilde{\mathcal{I}}_k^1\|_{L^\infty} &\lesssim \sum_j N_{j,k}^{n}e^{-N_{j,k}^2t}(e^{N_{j,k}^2t_k}-1)\lesssim \sum_j N_{j,k}^{n+2}t_ke^{-N_{j,k}^2t},
\end{split}
\end{equation}
which is acceptable as long as $t_k\ll N_{J_d,k}^{-2}$. 
By Lemma~\ref{l:commutator},
\begin{align*}
    \|\nabla^n\tilde{\mathcal I}_k^2\|_\infty&\lesssim \sum_jN_{j,k}^{n}\left(M_{j,k}N_{j,k}^{-1}e^{-N_{j,k}^2t/4} + N_{j,k}^{-m}M_{j,k}^m\right),
\end{align*}
which can be small enough again by taking $m$ sufficiently large. (Note the non-decaying term, which we can take extremely small.) Finally,
\begin{equation}\notag
\begin{split}
\|\nabla^n\tilde{\mathcal{I}}_k^3\|_{L^\infty} &=\frac12\Big\|\nabla^ne^{(t-t_k)\Delta}\sum_j e^{-2N_{j,k+1}
^2t_k}\mathbb Q  \left(a_{j,k+1}^2 \theta_j\otimes \theta_j\right)\Big\|_{L^\infty}\\
&\lesssim_\varepsilon \Big\|\sum_j  e^{-2N_{j,k+1}
^2t_k} \nabla^n  \left(a_{j,k+1}^2 \theta_j\otimes \theta_j\right)\Big\|_{C^\varepsilon}\\
&\lesssim \sum_j N_{J_d, k}^{n+\varepsilon} e^{-2N_{j,k+1}^2t_k},
\end{split}
\end{equation}
where we used \eqref{eq:a_bounds} and \eqref{time-sequence}. 

The last residual term enjoys the following estimate:
\[
\begin{split}
\|\nabla^n\mathcal{I}_k^{>t_k}\|_{L^\infty} &\leq \left\|\int_{t_k}^t \sum_j N_{j,k+1}^2 e^{-2N_{j,k+1}
^2s} \nabla^n \mathbb Q \left(a_{j,k+1}^2 \theta_j\otimes \theta_j\right) \, ds\right\|_{L^\infty}\\
&\lesssim_\varepsilon \int_{t_k}^t \sum_j N_{j,k+1}^2 e^{-2N_{j,k+1}
^2s} N_{J_d, k}^{n+\varepsilon}  \, ds\\
&\lesssim \sum_j N_{J_d, k}^{n+\varepsilon} e^{-2N_{j,k+1}^2t_k},
\end{split}
\]
where we used \eqref{eq:a_bounds} and \eqref{time-sequence}.

To conclude we combine all the terms and obtain, using the triangle inequality,
for $t\geq t_k$,
\[
\begin{split}
\|\nabla^n(\bar R_k(t) - \mathcal{I}_k(t))\|_\infty & \leq \|\nabla^n(\bar R_k - \tilde{\mathcal{I}}_k^{<t_k})\|_\infty + \|\nabla^n( \tilde{\mathcal{I}}_k^{<t_k} - \mathcal{I}_k)\|_\infty\\
&= \|\nabla^n(\tilde{\mathcal{I}}_k^1 +\tilde{\mathcal{I}}_k^2 +\tilde{\mathcal{I}}_k^3 -(\Id-2\frac{\nabla\otimes\nabla}{\Delta})p)\|_\infty \\
&\quad   +\| \nabla^n(\tilde{\mathcal{I}}_k^{<t_k} - \mathcal{I}_k^{<t_k})\|_\infty + \|\nabla^n\mathcal{I}_k^{>t_k}\|_\infty \\
&\lesssim_{\varepsilon,m} \sum_j \big( N_{j,k}^{n}e^{-N_{j,k}^2t/4}( N_{j,k}^{2}t_k + M_{j,k}N_{j,k}^{-1}) + N_{j,k}^{n-m}M_{j,k}^m \big)\\
&\qquad +\sum_j\big(M_{j,k}N_{j,k}^{n-1}e^{-N_{j,k}^2(t-t_k)/4}+N_{j,k}^{n-m}M_{j,k}^{m}\big)\\
&\qquad + \sum_j N_{J_d, k}^{n+\varepsilon} e^{-2N_{j,k+1}^2t_k} +t_k N_{J_d,k}^{n+2+\varepsilon}.
\end{split}
\]
Choosing $\gamma=\frac12$ in the definition of $M_{j,k}$ \eqref{M_definition} and $t_k= N_{J_d,k}^{-4}$,
we can ensure that
\[
\begin{split}
N_{j,k}^{n}e^{-N_{j,k}^2t/4}(N_{j,k}^{2}t_k + M_{j,k}N_{j,k}^{-1} )&\leq  N_{j,k}^{n}e^{-N_{j,k}^2t/4} \times 2N_{j,k}^{-\frac12}\\
&\lesssim N_{j,k}^{n-\frac12}e^{-N_{j,k}^2t/4},
\end{split}
\]
with similar bounds for the remaining terms. 
Also recall that, for $t \in [0,t_k]$,
\[
\begin{split}
\|\nabla^n(\bar R_k(t) - \mathcal{I}_k(t))\|_{L^\infty_{x,t}}  &\lesssim_\varepsilon N_{J_d, k}^{n+\varepsilon}\\
& = t_k^{-(n+\varepsilon)/4}.
\end{split}
\]
Then for any $\varepsilon_0>0$ and $0<\alpha < \frac{1}{10}$, we can choose $A$, $b$, and $m$ large enough and $\varepsilon$ small enough so that
\[
\|\nabla^n(\bar R_k(t) - \mathcal{I}_k(t))\|_\infty \leq_n \varepsilon_0 N^{-\alpha}_{1,k}(t^{-\frac{n}{2} +\alpha}+1), \qquad n=1,2,\dots, \bar n
\]
for all $t\geq 0$.

For $n=0$ the above bound breaks down on the time interval $[0,t_k]$, and we apply estimates \eqref{eq:bound_before_t_k_2D} and \eqref{eq:bound_before_t_k_3D} instead.
\end{proof}

Applying Proposition \ref{barv-I_estimate_proposition}, we further show that $R_k-\bar R_k$ is small as follows. Consequently, $v_k-\bar v_k$ is also small since $v_k=\div R_k$ and $\bar v_k=\div\bar R_k$.

\begin{proposition}\label{difference_estimate_proposition}
For all $\alpha>0$ sufficiently small, $\epsilon_0>0$, and $\bar n \in \mathbb{N}$, parameters $A$ and $b$ can be taken sufficiently large such that
\[
\|\nabla^n(R_k(t)- \bar R_k(t) )\|_{L^\infty} \leq_n \epsilon_0 N_{1,k}^{-\alpha} (t^{-\frac{n}{2}+\alpha}+1), \qquad n=1,2,\dots, \bar n,
\]
\[
\|R_k(t)- \bar R_k(t)\|_{L^\infty} \lesssim_\varepsilon \varepsilon_0N_{1,k}^{-\alpha}+\mathbbm{1}_{t\leq t_k}N_{J_d, k}^{\varepsilon}, \qquad \text{if} \qquad d=2,
\]
\[
\|R_k(t)- \bar R_k(t)\|_{L^\infty} \lesssim \varepsilon_0N_{1,k}^{-\alpha}+\mathbbm1_{t\leq t_k}, \qquad \text{if} \qquad d\geq 3.
\]
\end{proposition}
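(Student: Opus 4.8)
The plan is to reduce everything to Proposition~\ref{barv-I_estimate_proposition} by passing through the auxiliary quantity $\mathcal I_k$ introduced there. Writing $R_k-\bar R_k=(R_k-\mathcal I_k)+(\mathcal I_k-\bar R_k)$, the second difference is exactly what Proposition~\ref{barv-I_estimate_proposition} bounds, so it suffices to establish the same three estimates for $R_k-\mathcal I_k$ and then invoke the triangle inequality. Using $\mathbb Q=\mathcal R\mathbb P\div$ from \eqref{eq:Q_identities}, I would first rewrite $R_k(t)=-\int_0^t e^{(t-s)\Delta}\mathbb Q\big(\bar v_{k+1}\otimes\bar v_{k+1}\big)(s)\,ds$, so that
\[
R_k(t)-\mathcal I_k(t)=-\int_0^t e^{(t-s)\Delta}\,\mathbb Q\,\mathcal E(s)\,ds,\qquad \mathcal E(s)\coloneqq\bar v_{k+1}\otimes\bar v_{k+1}-\sum_j N_{j,k+1}^2 e^{-2N_{j,k+1}^2 s}\,a_{j,k+1}^2\,\theta_j\otimes\theta_j .
\]
Everything then reduces to showing that $\mathcal E(s)$, once convolved with the heat semigroup and hit with $\mathbb Q$ and integrated in time, is of lower order.

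To analyze $\mathcal E$, I would expand $\bar v_{k+1}\otimes\bar v_{k+1}=\sum_{j,j'}N_{j,k+1}N_{j',k+1}e^{-(N_{j,k+1}^2+N_{j',k+1}^2)s}\,\Delta\psi_{j,k+1}\otimes\Delta\psi_{j',k+1}$ and separate diagonal from off-diagonal. For the off-diagonal terms ($j\ne j'$): when $d\ge3$ the enlarged cylinders $\mathcal C_{j,k+1}(2\delta_0)$ are pairwise disjoint, hence $\supp\psi_{j,k+1}\cap\supp\psi_{j',k+1}=\emptyset$ and these terms vanish identically; when $d=2$ the scales $N_{j,k+1}$, $N_{j',k+1}$ are separated by a positive power of $N_{1,k+1}$, so the elementary bound $\int_0^t N_{j,k+1}N_{j',k+1}e^{-(N_{j,k+1}^2+N_{j',k+1}^2)s}\,ds\lesssim N_{1,k+1}^{-c}$ (a consequence of \eqref{scale-separation}), together with $\|\Delta\psi_{j,k+1}\otimes\Delta\psi_{j',k+1}\|_\infty\lesssim1$ from \eqref{eq:Dpsi_bounds} and $L^\infty$-boundedness of $\mathbb Q$ on frequency-localized pieces, gives a contribution $\lesssim N_{1,k+1}^{-c}$, which is $\le\epsilon_0 N_{1,k+1}^{-\alpha}$ uniformly in $t$ once $\alpha<c$ and $A$ is large. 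For the diagonal terms, $\Delta\psi_{j,k+1}=g_{j,k+1}\,\theta_j$ is scalar$\times\theta_j$ with $g_{j,k+1}=-a_{j,k+1}\varphi_{j,k+1}\sin(N_{j,k+1}\eta_j\cdot x)$ up to correctors smaller by a power of $M_{j,k+1}/N_{j,k+1}$ (amplitude derivatives) or of $\ell_{k+1}N_{j,k+1}$ (mollifier mismatch). Squaring, $g_{j,k+1}^2=a_{j,k+1}^2\varphi_{j,k+1}^2\sin^2(N_{j,k+1}\eta_j\cdot x)+(\text{correctors})$; since $\varphi_{j,k+1}$ is $2\pi/M_{j,k+1}$-periodic with $\fint_{\mathbb T^d}\varphi_{j,k+1}^2\sin^2(N_{j,k+1}\eta_j\cdot x)\,dx=1$ and $a_{j,k+1}$ oscillates only at scale $N_{J_d,k}\ll M_{j,k+1}$, one gets $P_{<M_{j,k+1}/2}(g_{j,k+1}^2)=a_{j,k+1}^2$ modulo negligible commutator errors. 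This cancels $\mathcal I_k$ exactly, leaving $\mathcal E$ equal to the off-diagonal part plus, for each $j$, the correctors and the high-pass remainder $P_{\ge M_{j,k+1}/2}(g_{j,k+1}^2)\theta_j\otimes\theta_j$, which is supported at frequencies $\gtrsim M_{j,k+1}$.

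It remains to integrate these pieces against $e^{(t-s)\Delta}\mathbb Q$. For $t\ge t_k$, the high-pass remainder gains heat decay $e^{-cM_{j,k+1}^2(t-s)}$, which combined with the weight $N_{j,k+1}^2 e^{-2N_{j,k+1}^2 s}$ and the separation $M_{j,k+1}\ll N_{j,k+1}$ produces a bound $\lesssim M_{j,k+1}^n e^{-cM_{j,k+1}^2 t/4}+(M_{j,k+1}/N_{j,k+1})^m$; since the ladder $\{N_{j,k}\}$ grows double-exponentially, each term lies below $\epsilon_0 N_{1,k+1}^{-\alpha}(t^{-n/2+\alpha}+1)$ once $b,m$ are fixed large enough and $\gamma,t_k$ are chosen in their admissible ranges so that $M_{j,k+1}^2 t_k\gtrsim\log N_{1,k+1}$. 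The derivatives $\nabla^n$ are absorbed either by the frequency localization or by the standard smoothing $\|\nabla^n e^{\tau\Delta}f\|_\infty\lesssim\tau^{-n/2}\|f\|_\infty$, which regenerates the $t^{-n/2+\alpha}$ weight exactly as in Proposition~\ref{barv-I_estimate_proposition}. For $t\in[0,t_k]$ the exponential gains are unavailable, so there I would fall back on the crude bounds $\|\Delta\psi_{j,k+1}\|_\infty\lesssim1$ and $\|\bar v_{k+1}(s)\|_\infty\lesssim N_{J_d,k+1}$, giving a contribution $\lesssim_\varepsilon N_{J_d,k}^{\varepsilon}$ (resp.\ $\lesssim1$ when $d\ge3$), precisely the $\mathbbm1_{t\le t_k}$ crutch, matching \eqref{eq:bound_before_t_k_2D}--\eqref{eq:bound_before_t_k_3D}. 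Summing the diagonal and off-diagonal contributions and adding the bound of Proposition~\ref{barv-I_estimate_proposition} yields the three stated inequalities; the claim for $v_k-\bar v_k=\div(R_k-\bar R_k)$ follows by taking one additional derivative.

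The main obstacle is the diagonal analysis: enumerating every corrector produced by squaring $\Delta\psi_{j,k+1}$---amplitude derivatives, mollifier mismatch, the oscillatory part $-\tfrac12 a_{j,k+1}^2\varphi_{j,k+1}^2\cos(2N_{j,k+1}\eta_j\cdot x)$ at scale $N_{j,k+1}$, and the medium-frequency part $\tfrac12 a_{j,k+1}^2\big(\varphi_{j,k+1}^2-\fint_{\mathbb T^d}\varphi_{j,k+1}^2\big)$ at scale $M_{j,k+1}$---and verifying that after $e^{(t-s)\Delta}\mathbb Q\int_0^t(\cdot)\,ds$ each falls below $\epsilon_0 N_{1,k+1}^{-\alpha}(t^{-n/2+\alpha}+1)$ for $t\ge t_k$. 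This forces one to use, simultaneously, heat smoothing at the intermediate scale $M_{j,k+1}$ and the super-fast growth of $\{N_{j,k}\}$, so that ratios such as $M_{j,k+1}/N_{j,k+1}$ and $N_{J_d,k}/N_{1,k+1}$ beat every fixed negative power of $N_{1,k+1}$---the same balancing act already performed at the level of $\mathcal I_k$ versus $\bar R_k$ in Proposition~\ref{barv-I_estimate_proposition}.
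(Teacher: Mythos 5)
Your proposal is correct and follows the same architecture as the paper's proof: you pass through $\mathcal I_k$ and invoke Proposition~\ref{barv-I_estimate_proposition} by the triangle inequality, split $\bar v_{k+1}\otimes\bar v_{k+1}$ into diagonal and off-diagonal parts, identify the diagonal mean with the integrand of $\mathcal I_k$, treat the remaining pieces as lower order, and fall back on crude bounds on $[0,t_k]$, exactly as in the paper's decomposition $R_k=\mathcal I_k+\mathcal J_k^1+\mathcal J_k^2+\mathcal E_k$. You deviate in two local but genuine ways. First, for $d\geq3$ you kill the off-diagonal terms outright by the disjointness of the translated cylinders $\mathcal C_{j,k+1}(4\delta_0)$, whereas the paper estimates $\mathcal J_k^2$ generically via the commutator/heat-decay bound \eqref{eq:NonlocalInteractions}; your observation is cleaner precisely in the regime $d\geq3$ where all $N_{j,k+1}$ coincide and there is no intra-level time-scale separation to exploit. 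Second, for the diagonal oscillatory remainder you argue by frequency support (zero mean at scale $\gtrsim M_{j,k+1}$, heat decay $e^{-cM_{j,k+1}^2(t-s)}$, rapid Fourier decay of the slow amplitude controlling the low-frequency leakage), whereas the paper first uses the shear structure $\theta_j\cdot\eta_j=0$, $\theta_j\cdot\nabla\varphi_{j,k+1}=0$ (identity \eqref{div-osc}) so that the divergence only hits $a_{j,k+1}^2$, and then applies the oscillation estimate of Lemma~\ref{l:oscillation_estimate}; both routes deliver the same gain ($M_{j,k+1}$-scale heat decay plus an $M^{-m}$-type non-decaying tail), the paper's identity buying a cleaner bookkeeping of that tail. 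One caution: for the $d=2$ off-diagonal terms with $n\geq1$, the uniform-in-$t$ bound $N_{1,k+1}^{-c}$ coming from the time integral alone does not suffice once derivatives cost $N_{j,k+1}$; you must retain the exponential decay in $t$ at scale $N_{j,k+1}^{-2}$ (as in \eqref{eq:NonlocalInteractions}) and combine it with the frequency gap $N_{j',k+1}/N_{j,k+1}\lesssim N_{j,k+1}^{-(1-b^{-1/J_d})}$ to recover the weight $t^{-n/2+\alpha}$; your parenthetical appeal to heat smoothing covers this, but it is the step where the sketch needs to be carried out carefully, and similarly the $n\geq1$ bounds on $[0,t_k]$ require converting the crude $N_{J_d,k}^{n+\varepsilon}$ bound into the stated form using $t\leq t_k=N_{J_d,k}^{-4}$ and $\alpha$ small, as in the end of Proposition~\ref{barv-I_estimate_proposition}.
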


\begin{proof}
We start by noting that
\begin{equation}\notag
\begin{split}
\Delta \psi_{j,k}(x)&=-a_{j,k}(x)\varphi_{j,k}(x)\sin(N_{j,k}\eta_j\cdot x)\theta_j\\
&\quad+2N_{j,k}^{-1}\nabla(a_{j,k}\varphi_{j,k})\cdot\eta_j\cos(N_{j,k}\eta_j\cdot x)\theta_j\\
&\quad +N_{j,k}^{-2}\Delta (a_{j,k}\varphi_{j,k})\sin(N_{j,k}\eta_j\cdot x)\theta_j\\
&\quad +\Delta(\psi_{j,k}-N_{j,k}^{-2}a_{j,k}\varphi_{j,k}\theta_j\sin(N_{j,k}\eta_j\cdot x)),
\end{split}
\end{equation}
with the first term the main contribution and the other three lower order errors. We split 
\begin{equation}\label{bar-v-k1}
\bar v_{k+1}\otimes \bar v_{k+1}
=\sum_j\bar v_{j,k+1}\otimes \bar v_{j,k+1}+\sum_{j\neq j'}\bar v_{j,k+1}\otimes \bar v_{j',k+1}.
\end{equation}
We compute the unidirectional terms
\begin{equation}\label{unidirectional_terms}
\begin{split}
\sum_j\bar v_{j,k+1}\otimes \bar v_{j,k+1}&=\sum_{j} N_{j,k+1}^2 e^{-2N_{j,k+1}
^2t} \Delta \psi_{j,k+1}\otimes \Delta \psi_{j,k+1}\\
&=\sum_{j} N_{j,k+1}^2 e^{-2N_{j,k+1}
^2t} a_{j,k+1}^2 \theta_j\otimes \theta_j\\
&\hspace{-8em}+ \sum_{j} N_{j,k+1}^2 e^{-2N_{j,k+1}
^2t} a_{j,k+1}^2(\varphi^2_{j,k+1} \sin^2(N_{j,k+1}\eta_j\cdot x) -1)\theta_j\otimes \theta_j + \mathfrak{E}_k^1,
\end{split}
\end{equation}
where $\mathfrak{E}_k^1$ contains lower order error terms involving derivatives of $a_{j,k+1}\varphi_{j,k+1}$ as well as terms coming from removing the mollifier.

The residual terms with $j\ne j'$ in \eqref{bar-v-k1} are
\begin{equation} \label{different_directions_terms}
\begin{split}
\sum_{j\neq j'}&\bar v_{j,k+1}\otimes \bar v_{j',k+1}\\
=&\sum_{j\neq j'} N_{j,k+1}N_{j',k+1} e^{-N_{j,k+1}
^2t-N_{j',k+1}^2t} \Delta \psi_{j,k+1}\otimes \Delta \psi_{j',k+1}\\
=&\sum_{j\neq j'} N_{j,k+1}N_{j',k+1} e^{-N_{j,k+1}
^2t-N_{j',k+1}^2t} \\
&\cdot a_{j,k+1}a_{j',k+1}\varphi_{j,k+1}\varphi_{j',k+1} \sin(N_{j,k+1}\eta_j\cdot x)\sin(N_{j',k+1}\eta_{j'}\cdot x)\theta_j\otimes \theta_{j'}\\
&+\mathfrak{E}_k^2,
\end{split}
\end{equation}
where $\mathfrak{E}_k^2$ contains lower order error terms involving derivatives of $a_{j,k+1}a_{j',k+1}\varphi_{j,k+1}\varphi_{j',k+1}$ as well as terms coming from removing the mollifier. We decompose the sum of all the lower order terms as follows:
\begin{equation}\label{E1+E2terms}
\begin{split}
\mathfrak{E}_k^1 +\mathfrak{E}_k^2 &= \sum_{j} N_{j,k+1}^2 e^{-2N_{j,k+1}
^2t} E_{j,k}\\
&+\sum_{j\neq j'} N_{j,k+1}N_{j',k+1} e^{-N_{j,k+1}
^2t-N_{j',k+1}^2t}E_{j,j',k}\\
&+\sum_{j,j'}N_{j,k+1}N_{j',k+1}e^{-N_{j,k+1}^2t-N_{j',k+1}^2t}F_{j,j',k},
\end{split}
\end{equation}
where $E_{j,k}$ and $E_{j,j',k}$ contain all the terms involving derivatives of $a_{j,k+1}\varphi_{j,k+1}$ coming from the sums $\sum_j$ and $\sum_{j\neq j'}$ respectively, while $F_{j,j',k}$ comes from removing the mollifier. The term $E_{j,k}$ is specified below.
\begin{equation}\notag
\begin{split}
E_{j,k}&=-2N_{j,k+1}^{-1}a_{j,k+1}\varphi_{j,k+1}\nabla(a_{j,k+1}\varphi_{j,k+1})\cdot\eta_j\sin(2N_{j,k+1}\eta_{j}\cdot x)\theta_j\otimes \theta_j\\
&\quad+4N_{j,k+1}^{-2}(\nabla(a_{j,k+1}\varphi_{j,k+1})\cdot\eta_j)^2\cos^2(N_{j,k+1}\eta_{j}\cdot x)\theta_j\otimes \theta_j\\
&\quad-2N_{j,k+1}^{-2}a_{j,k+1}\varphi_{j,k+1}\Delta(a_{j,k+1}\varphi_{j,k+1})\sin^2(N_{j,k+1}\eta_{j}\cdot x)\theta_j\otimes \theta_j\\
&\quad+2N_{j,k+1}^{-3}\Delta(a_{j,k+1}\varphi_{j,k+1})\nabla(a_{j,k+1}\varphi_{j,k+1})\cdot\eta_j\sin(2N_{j,k+1}\eta_{j}\cdot x)\theta_j\otimes \theta_j\\
&\quad+N_{j,k+1}^{-4}(\Delta(a_{j,k+1}\varphi_{j,k+1}))^2\sin^2(N_{j,k+1}\eta_{j}\cdot x)\theta_j\otimes \theta_j\\
&=:E^1_{j,k}+E^2_{j,k}+E^3_{j,k}+E^4_{j,k}+E^5_{j,k}.
\end{split}
\end{equation}
The term $E_{j,j',k}$ collects all the residual errors in the sum of $j\neq j'$, which are lower order terms. 
The mollifier error is given by
\begin{align*}
    F_{j,j',k}&=\Delta(\psi_{j,k+1}-N_{j,k+1}^{-2}a_{j,k+!}\varphi_{j,k+1}\theta_j\sin(N_{j,k+1}\eta_j\cdot x))\otimes\Delta\psi_{j',k+1}\\
    &\quad+\Delta(N_{j,k+1}^{-2}a_{j,k+1}\varphi_{j,k+1}\theta_j\sin(N_{j,k+1}\eta_j\cdot x))\\
    &\qquad\otimes \Delta(\psi_{j',k+1}-N_{j',k+1}^{-2}a_{j',k+1}\varphi_{j',k+1}\theta_{j'}\sin(N_{j',k+1}\eta_{j'}\cdot x)).
\end{align*}
Combining \eqref{unidirectional_terms} and \eqref{different_directions_terms} we obtain
\[
\begin{split}
R_k =& -\int_0^{t} e^{(t-s)\Delta }\sum_j N_{j,k+1}^2 e^{-2N_{j,k+1}
^2s} \mathbb Q (a_{j,k+1}^2 \theta_j\otimes \theta_j) \, ds \\
& -\int_0^{t} e^{(t-s)\Delta }\sum_j N_{j,k+1}^2 e^{-2N_{j,k+1}
^2s} \mathbb Q (a_{j,k+1}^2(\varphi^2_{j,k+1} \sin^2(N_{j,k+1}\eta_j\cdot x) -1) \theta_j\otimes \theta_j) \, ds \\
&- \int_0^{t} e^{(t-s)\Delta }\sum_{j\ne j'}N_{j,k+1}N_{j',k+1} e^{-N_{j,k+1}
^2s-N_{j',k+1}^2s} \\
&\qquad \quad\cdot \mathbb Q (a_{j,k+1}a_{j',k+1}\varphi_{j,k+1}\varphi_{j',k+1} \sin(N_{j,k+1}\eta_j\cdot x)\sin(N_{j',k+1}\eta_{j'}\cdot x)\theta_j\otimes \theta_{j'}) \, ds\\
&-\int_0^{t} e^{(t-s)\Delta }\mathbb Q(\mathfrak{E}_k^1 +\mathfrak{E}_k^2) \, ds\\
=& \mathcal{I}_k+\mathcal{J}_k^1+\mathcal{J}_k^2 + \mathcal{E}_k.
\end{split}
\]
Among the terms on the right hand side of the equation, the main term $\mathcal{I}_k \approx \bar R_k$ as was already shown in Proposition~\ref{barv-I_estimate_proposition}. It remains to prove that the other ones obey acceptable small upper bounds.

%%%%%%%%%%%%%%%%%%%%%%%%%%%%%%%%%%%%
% J^1 and J^2
%%%%%%%%%%%%%%%%%%%%%%%%%%%%%%%%%%%%

\noindent
{\bf Residual terms $\mathcal{J}_k^1$ and $\mathcal{J}_k^2$.}

Now we proceed with estimates of $\mathcal{J}_k^1$ and $\mathcal{J}_k^2$. First, using the fact that a parallel shear flow is a stationary solution of the Euler equation, as $\eta_j \cdot \theta_j=0$,
\begin{multline}\label{div-osc}
\div (a_{j,k+1}^2(x)(\varphi^2_{j,k+1} \sin^2(N_{j,k+1}\eta_j\cdot x) -1) \theta_j\otimes \theta_j)\\
\quad= \nabla a_{j,k+1}^2 \cdot \theta_j (\varphi^2_{j,k+1} \sin^2(N_{j,k+1}\eta_j\cdot x) -1)\theta_j.
\end{multline}
 
By Lemma~\ref{l:oscillation_estimate}, we have
\begin{multline}\label{est-anti-div-osc}
\big\|e^{(t-s)\Delta }\mathcal R\mathbb P\big(\nabla a_{j,k+1}^2 \cdot \theta_j (\varphi^2_{j,k+1} \sin^2(N_{j,k+1}\eta_j\cdot x) -1)\big)\big\|_{C^\varepsilon}\\
\lesssim_\varepsilon M_{j,k+1}^{-1}\|\nabla(a_{j,k+1}^2)\|_{C^\varepsilon}e^{-M_{j,k+1}^2(t-s)/4}+M_{j,k+1}^{-m}\|\nabla(a_{j,k+1}^2)\|_{C^{m,\varepsilon}},
\end{multline}
and, for $n\geq1$,
\begin{equation}\label{est-derivative-anti-div-osc}
\begin{split}
&\big\|\nabla^n e^{(t-s)\Delta }\mathcal R\mathbb P\big(\nabla a_{j,k+1}^2 \cdot \theta_j (\varphi^2_{j,k+1} \sin^2(N_{j,k+1}\eta_j\cdot x) -1)\big)\big\|_{C^\varepsilon}\\
&\quad\lesssim_\varepsilon N_{j,k+1}^{n-1}\|\nabla(a_{j,k+1}^2)\|_{C^\varepsilon}e^{-M_{j,k+1}^2(t-s)/4}+M_{j,k+1}^{-m+n}\|\nabla(a_{j,k+1}^2)\|_{C^{m,\varepsilon}}\\
&\quad\lesssim N_{j,k+1}^{n-1}N_{J_d,k}^{1+\varepsilon}e^{-M_{j,k+1}^2(t-s)/4}+M_{j,k+1}^{-m+n}N_{J_d,k}^{m+1+\varepsilon}
\end{split}
\end{equation}
using that $\varphi^2_{j,k+1}(x) \sin^2(N_{j,k+1}\eta_j\cdot x)-1$ is zero mean and $2\pi/M_{j,k+1}$-periodic.
Recalling $\mathbb Q=\mathcal R\mathbb P\div$, in view of \eqref{div-osc} and the estimate \eqref{est-anti-div-osc}, we obtain
\[
\begin{split}
\| \mathcal{J}_k^1 \|_{L^\infty} &= \Big\| \int_0^{t}  e^{(t-s)\Delta } \sum_{j} N_{j,k+1}^2 e^{-2N_{j,k+1}^2s} \\
&\qquad \quad \times \mathcal R \mathbb P \div \big(a_{j,k+1}^2 (\varphi^2_{j,k+1} \sin^2(N_{j,k+1}\eta_j\cdot x) -1)\theta_j\otimes \theta_j\big) \, ds\Big\|_\infty\\
&\lesssim_\varepsilon  \int_0^{t}  \sum_{j}  N_{j,k+1}^2 e^{-2N_{j,k+1}^2s}
\left(  M_{j,k+1}^{-1}\|\nabla(a_{j,k+1}^2)\|_{C^\varepsilon}e^{-M_{j,k+1}^2(t-s)/4} \right.\\
&\qquad \quad \left.+M_{j,k+1}^{-m}\|\nabla(a_{j,k+1}^2)\|_{C^{m,\varepsilon}} \right)\, ds\\
&\lesssim_\varepsilon  \int_0^{t}  \sum_{j}  N_{j,k+1}^2 e^{-2N_{j,k+1}^2s}
N_{J_d,k}^{1+\varepsilon}\big(  M_{j,k+1}^{-1}e^{-M_{j,k+1}^2(t-s)/4}+M_{j,k+1}^{-m}N_{J_d,k}^{m} \big)\, ds\\
&\lesssim \sum_{j}  
N_{J_d,k}^{1+\varepsilon}\big(  M_{j,k+1}^{-1}e^{-M_{j,k+1}^2t/4}+M_{j,k+1}^{-m}N_{J_d,k}^{m} \big).
\end{split}
\]

For $n\geq 1$, applying \eqref{div-osc} and the estimate \eqref{est-derivative-anti-div-osc} yields
\[
\begin{split}
\| \nabla^n \mathcal{J}_k^1 \|_{L^\infty} &= \Big\| \int_0^{t} \nabla^n e^{(t-s)\Delta } \sum_{j} N_{j,k+1}^2 e^{-2N_{j,k+1}^2s} \\
&\qquad \quad \times \mathcal R \mathbb P \div \big(a_{j,k+1}^2 (\varphi^2_{j,k+1} \sin^2(N_{j,k+1}\eta_j\cdot x) -1)\theta_j\otimes \theta_j\big) \, ds\Big\|_\infty\\
&\lesssim_\varepsilon  \int_0^{t}  \sum_{j}  N_{j,k+1}^2 e^{-2N_{j,k+1}^2s}
N_{J_d,k}^{1+\varepsilon}\big(  N_{j,k+1}^{n-1}e^{-M_{j,k+1}^2(t-s)/4}+M_{j,k+1}^{-m+n}N_{J_d,k}^{m} \big)\, ds\\
&\lesssim \sum_{j}
N_{J_d,k}^{1+\varepsilon}\big( N_{j,k+1}^{n-1} e^{-M_{j,k+1}^2t/4}+M_{j,k+1}^{-m+n}N_{J_d,k}^{m} \big).
\end{split}
\]

Now we focus on $\mathcal{J}_k^2$. 
%By Lemma~\ref{l:commutator},
%\[
%\begin{split}
%\big\|\nabla^n e^{(t-s) \Delta}\mathbb P &\nabla\big(a_{j,k+1}a_{j',k+1}\varphi_{j,k+1}\varphi_{j',k+1} \sin(N_{j,k+1}\eta_j\cdot x)\sin(N_{j',k+1}\eta_{j'}\cdot x)\big)\big\|_\infty\\
%&\lesssim_\varepsilon N_{j\vee j',k+1}^{n+1+\varepsilon} e^{-N_{j\vee j',k+1}^2(t-s)/4} + N_{j\vee j',k+1}^{n-m}M_{j\vee j',k+1}^{m+1+\varepsilon}.
%\end{split}
%\]
Bounding $L^\infty$ by $C^\varepsilon$, using that $\mathbb Q$ is a Calder\'on--Zygmund operator, and estimating the heat propagator with \eqref{heat-decay-estimate}, we have
\begin{equation}
\begin{split}
&\|\nabla^n  \mathcal{J}_k^2(t) \|_{L^\infty}\\
&\quad=
\Big\| \int_0^{t} \nabla^n e^{(t-s)\Delta } \sum_{j\neq j'} N_{j,k+1}N_{j',k+1} e^{-N_{j,k+1}
^2s-N_{j',k+1}^2s}\\
&\qquad\times\mathbb Q(a_{j,k+1}a_{j',k+1}\varphi_{j,k+1}\varphi_{j',k+1} \sin(N_{j,k+1}\eta_j\cdot x)\sin(N_{j',k+1}\eta_{j'}\cdot x)\theta_j\otimes \theta_{j'}) \, ds \Big\|_\infty\\
&\quad\lesssim \int_0^{t} \sum_{j' < j} N_{j,k+1}N_{j',k+1} e^{-N_{j,k+1}
^2s-N_{j',k+1}^2s} N_{j,k+1}^{n+\varepsilon}e^{-\frac14 N_{j,k+1}
^2(t-s)}  \, ds\\
&\qquad +\int_0^{t} \sum_{j' < j} N_{j,k+1}N_{j',k+1} e^{-N_{j,k+1}
^2s-N_{j',k+1}^2s} N_{j,k+1}^{n-m}M_{j,k+1}^{m+\varepsilon}  \, ds\\
&\quad\eqcolon\mathcal{J}_k^{2,1} + \mathcal{J}_k^{2,2}.
\end{split}
\end{equation}
Computing the major term $\mathcal{J}_k^{2,1}$ explicitly we obtain
\begin{equation} \label{eq:NonlocalInteractions}
\begin{split}
\mathcal{J}_k^{2,1}&= \sum_{j'<j} e^{-\frac14N_{j,k+1}^2t}N^{n+1+\varepsilon}_{j,k+1}N_{j',k+1}\frac{1-e^{-(\frac34N_{j,k+1}^2+N_{j',k+1}^2)t}}{\frac34N_{j,k+1}^2+N_{j',k+1}^2}\\
& \lesssim \sum_{j'<j} e^{-\frac14N_{j,k+1}^2t}N^{n-1+\varepsilon}_{j,k+1}N_{j',k+1},
\end{split}
\end{equation}
while the non-decaying term $\mathcal{J}_k^{2,2}$ is very small again:
\[
\mathcal{J}_k^{2,2} \lesssim \sum_{j'<j} N_{j,k+1}^{-1}N_{j',k+1} N_{j,k+1}^{n-m}M_{j,k+1}^{m+\varepsilon}.
\]

\noindent
{\bf Residual term $\mathcal{E}_k$.}

Recall from \eqref{E1+E2terms} that

\[
\begin{split}
\mathcal{E}_k&=-\int_0^{t} e^{(t-s)\Delta } \mathbb Q(\mathfrak{E}_k^1 +\mathfrak{E}_k^2) \, ds\\
&= -\int_0^{t} e^{(t-s)\Delta }\sum_j N_{j,k+1}^2 e^{-2N_{j,k+1}
^2s} \mathbb Q E_{j,k}\, ds\\
&\quad -\int_0^{t} e^{(t-s)\Delta }\sum_{j\ne j'}N_{j,k+1}N_{j',k+1} e^{-N_{j,k+1}
^2s-N_{j',k+1}^2s} \mathbb Q E_{j,j',k}\, ds\\
&\quad-\int_0^{t} e^{(t-s)\Delta }\sum_{j,j'}N_{j,k+1}N_{j',k+1} e^{-N_{j,k+1}
^2s-N_{j',k+1}^2s} \mathbb Q F_{j,j',k}\, ds\\
&=:  \mathcal{E}_k^1+ \mathcal{E}_k^2 + \mathcal{E}_k^3.
\end{split}
\]
Since $\mathcal E^2_k$ collects the lower order terms from the interactions of different directions $j$ and $j'$, $\mathcal E^2_k$ obeys the same estimate as $\mathcal J^2_k$.

Next we estimate $\mathcal E^1_k$ by considering the items within it separately. In particular, we estimate the portion involving $E^1_{j,k}$ by taking advantage of the heat kernel and Lemma~\ref{l:commutator}, 
\begin{equation}\notag
\begin{split}
\Big\| &\nabla^n\int_0^t\sum_{j}N_{j,k+1}^2e^{-2N_{j,k+1}^2s}e^{(t-s)\Delta}\mathbb Q E^1_{j,k}\,ds \Big\|_\infty\\
&\lesssim \int_0^t\sum_{j}N_{j,k+1}^2e^{-2N_{j,k+1}^2s}N_{j,k+1}^{-1}\\
&\quad \times \big\|\nabla^ne^{(t-s)\Delta}\mathbb Q (a_{j,k+1}\varphi_{j,k+1}\nabla(a_{j,k+1}\varphi_{j,k+1})\cdot\eta_j\sin(2N_{j,k+1}\eta_j\cdot x)\theta_j\otimes \theta_j) \big\|_\infty\,ds\\
&\lesssim_\varepsilon \int_0^t\sum_{j}N_{j,k+1}e^{-2N_{j,k+1}^2s} N_{j,k+1}^{n+\varepsilon}e^{-2N_{j,k+1}^2(t-s)/4}\,ds\\
&\quad + \int_0^t\sum_{j}N_{j,k+1}e^{-2N_{j,k+1}^2s} N_{j,k+1}^{n-m}M_{j,k+1}^{m+1+\epsilon}\, ds\\
&\lesssim \sum_{j} N_{j,k+1}^{n-1+\varepsilon}e^{-\frac14 N_{j,k+1}^2t} (1-e^{-\frac74 N_{j,k+1}^2t}).
\end{split}
\end{equation}
Note that $E^2_{j,k}$ and $E^3_{j,k}$ are of the same order and both contain a zero-frequency part, it is thus not necessary to use the heat kernel decay. Instead we estimate them by using  
$\|e^{t\Delta} f\|_\infty\leq \|f\|_\infty$, obtaining
\begin{equation}\notag
\begin{split}
\Big\| \nabla^n &\int_0^t\sum_{j}N_{j,k+1}^2e^{-2N_{j,k+1}^2s}e^{(t-s)\Delta}\mathbb Q (E^2_{j,k}+E^3_{j,k})\,ds\Big\|_\infty\\
&\lesssim_\varepsilon\int_0^t\sum_{j}N_{j,k+1}^2e^{-2N_{j,k+1}^2s}\big\|\nabla^n \big(N_{j,k+1}^{-2}(\nabla(a_{j,k+1}\varphi_{j,k+1})\cdot\eta_j)^2\big)\big\|_{C^\varepsilon}\,ds\\
&\quad+\int_0^t\sum_{j}N_{j,k+1}^2e^{-2N_{j,k+1}^2s}\big\|\nabla^n  \big(N_{j,k+1}^{-2}a_{j,k+1}\varphi_{j,k+1}\Delta(a_{j,k+1}\varphi_{j,k+1})\big)\big\|_{C^\varepsilon}\,ds\\
&\lesssim \int_0^t\sum_{j}e^{-2N_{j,k+1}^2s}M_{j,k+1}^{n+2+\varepsilon}\,ds\\
&\lesssim \sum_{j}N_{j,k+1}^{-2}M_{j,k+1}^{n+2+\varepsilon}.
\end{split}
\end{equation}
Noting that $E^4_{j,k}$ obeys the same estimate as $E^1_{j,k}$ and $E^5_{j,k}$ obeys the same estimate as $E^2_{j,k}$, we conclude that
\[\|\nabla^n \mathcal E^1_k\|_\infty\lesssim \sum_{j} \big(N_{j,k+1}^{n-1+\varepsilon}e^{-\frac14 N_{j,k+1}^2t} +N_{j,k+1}^{-2}M_{j,k+1}^{n+2+\varepsilon}\big).
\]

Toward estimating $\mathcal E_k^3$, we recall that
\begin{align*}
    F_{j,j',k}&=\Delta(\psi_{j,k+1}-N_{j,k+1}^{-2}a_{j,k+1}\varphi_{j,k+1}\theta_j\sin(N_{j,k+1}\eta_j\cdot x))\otimes\Delta\psi_{j',k+1}\\
&\quad+\Delta(N_{j,k+1}^{-2}a_{j,k+1}\varphi_{j,k+1}\theta_j\sin(N_{j,k+1}\eta_j\cdot x))\\
    &\qquad\otimes \Delta(\psi_{j',k+1}-N_{j',k+1}^{-2}a_{j',k+1}\varphi_{j',k+1}\theta_{j'}\sin(N_{j',k+1}\eta_{j'}\cdot x)),
\end{align*}
and use the fact that $\|\nabla^n(f-\phi_{k+1}*f)\|_\infty\lesssim \ell_{k+1}\|\nabla^{n+1}f\|_\infty$ along with \eqref{eq:a_bounds} to obtain
\begin{align*}
    &\|\nabla^n\Delta(\psi_{j,k+1}-N_{j,k+1}^{-2}a_{j,k+1}\varphi_{j,k+1}\theta_j\sin(N_{j,k+1}\eta_j\cdot x))\|_\infty\\
    &\qquad\lesssim N_{j,k+1}^{-2}\ell_{k+1}\|\nabla^{3+n}(a_{j,k+1}\varphi_{j,k+1}\sin(N_{j,k+1}\eta_j\cdot x))\|_\infty\\
    &\qquad\lesssim N_{j,k+1}^{1+n}\ell_{k+1}.
\end{align*}
Similarly,
\begin{align*}
    \|\nabla^n\Delta\psi_{j,k+1}\|_\infty+\|\nabla^n\Delta(N_{j,k+1}^{-2}a_{j,k+1}\varphi_{j,k+1}\theta_j\sin(N_{j,k+1}\eta_j\cdot x))\|_\infty&\lesssim N_{j,k+1}^n.
\end{align*}
It follows by the Leibniz rule that
\begin{align*}
    \|\nabla^n\mathbb Q F_{j,j',k}\|_\infty&\lesssim N_{j\vee j',k+1}^{n+1+\varepsilon}\ell_{k+1},
\end{align*}
and therefore
\begin{align*}
    \|\nabla^n\mathcal E_k^3\|_\infty&\lesssim \sum_{j,j'}N_{j,k+1}N_{j',k+1}N_{j\vee j',k+1}^{-2}N_{j\vee j',k+1}^{n+1+\varepsilon}\ell_{k+1}\lesssim\sum_jN_{j,k+1}^{n+1+\varepsilon}\ell_{k+1}.
\end{align*}

\noindent
{\bf Final bound.}
By the triangle inequality,
\[
\|\nabla^n(R_k-\bar R_k)\|_\infty \leq \|\nabla^n(R_k-\mathcal{I}_k)\|_\infty + \|\nabla^n(\bar R_k-\mathcal{I}_k)\|_\infty .
\]
By Proposition~\ref{barv-I_estimate_proposition}, for any $\varepsilon_0 >0$, $\alpha \in(0, \frac{1}{10})$, and $\bar n \in \mathbb{N}$, 
\[
\|\nabla^n(\bar R_k - \mathcal{I}_k)\|_\infty \leq \varepsilon_0 N^{-\alpha}_{1,k+1}(t^{-\frac{n}{2} +\alpha}+1), \qquad n=1,2,\dots,\bar n,
\]
for $A$ and $b$ large enough.

Since
\[
R_k-\mathcal{I}_k=\mathcal{J}_k^1+\mathcal{J}_k^2 + \mathcal{E}_k,
\]
it remains to show such a bound for the residual terms $\mathcal{J}_k^1+\mathcal{J}_k^2 + \mathcal{E}_k$. Indeed, by the triangle inequality we have
\[ 
\begin{split}
\|\nabla^n(\mathcal{J}_k^1+\mathcal{J}_k^2 + \mathcal{E}_k)\|_\infty&\leq \|\nabla^n\mathcal{J}_k^1\|_\infty+\|\nabla^n\mathcal{J}_k^2\|_\infty+\|\nabla^n\mathcal{E}_k^1\|_\infty+\|\nabla^n\mathcal{E}_k^2\|_{L^\infty}+\|\nabla^n\mathcal{E}_k^3\|_\infty\\
&\lesssim_{\varepsilon,m} \sum_{j} N_{j,k+1}^{n -1+ \varepsilon} N_{J_d,k} \big(e^{-\frac14 M_{j,k+1}^2t} + M_{j,k+1}^{-m}N_{J_d,k}^{m} \big)\\
&\quad \, \,  +\sum_{j'<j} N^{n-1+\varepsilon}_{j,k+1}N_{j',k+1}e^{-\frac14N_{j,k+1}^2t}\\
&\quad \, \, +\sum_{j'<j} N_{j,k+1}^{-1}N_{j',k+1} N_{j,k+1}^{n-m}M_{j,k+1}^{m+\varepsilon}\\
&\quad \, \, +\sum_{j}\big( N_{j,k+1}^{n-1+\varepsilon}e^{-\frac14 N_{j,k+1}^2t}+N_{j,k+1}^{-2}M_{j,k+1}^{n+ 2+\varepsilon}\big)
+\sum_jN_{j,k+1}^{n+1+\varepsilon}\ell_{k+1}.
\end{split}
\]
As before, it follows that there exists $\alpha> 0$, such that for any $\varepsilon_0 >0$ and $\bar n \in \mathbb{N}$,
\[
\|\nabla^n(\mathcal{J}_k^1+\mathcal{J}_k^2 + \mathcal{E}_k)\|_\infty \leq \varepsilon_0 N^{-\alpha}_{1,k+1}(t^{-\frac{n}{2} +\alpha}+1), \qquad n=1,2,\dots, \bar n,
\]
for $A$, $b$, and $m$ large enough. For $n=0$ the bound becomes worse on $[0,t_k]$, as in Proposition~\ref{barv-I_estimate_proposition}.
\end{proof}

\subsection{Bounds on the solution and residual}

Now we are ready to prove that the principal part $v$ satisfies a forced Navier--Stokes system with the forcing term small.

\begin{proposition}\label{prop_f_estimate}
For any $\epsilon_0>0$, we can arrange that the principal part $v$ satisfies
    \begin{align}\label{v_equation}\partial_tv-\Delta v+\mathbb P\div v\otimes v=\mathbb P\div f\end{align}
    for an error $f$ satisfying
\begin{align}\label{f_bound}
    \|\nabla^n f\|_{C^\kappa}\lesssim_n \epsilon_0 (t^{-1 - \frac{n}{2} +\alpha}+1)
\end{align}
for some $0<\kappa<\alpha<1$.
Moreover,
\begin{align}
    \|\nabla^nv_k(t)\|_{L^p}&\lesssim ((t/N_{1,k+1})^{\alpha}+2^{-k/p})t^{-\frac12(1+n)}+N_{1,k+1}^{-\alpha},\label{vk-pointwise-bounds}
    \end{align}
    and
    \begin{align}
    \|\nabla^nv(t)\|_{L^\infty}+\|\nabla^n\bar v(t)\|_{L^\infty}&\lesssim t^{-\frac12(1+n)}\label{v-pointwise-bounds}
\end{align}
for $n=0,1,2,\ldots,\overline n$, and
\begin{align}\label{v-critical-bounds}
    \|v\|_{L^1([t',t], t^{-\frac12}dt; L^\infty)}+\|v\|_{L^2([t',t]; L^\infty)}^2\lesssim 1+(\log A)^{-1}\log(t/{t'}).
\end{align}
\end{proposition}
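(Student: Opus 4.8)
The plan is to derive \eqref{v_equation} from the definitions of the components $v_k$ and then to deduce every bound from Propositions~\ref{barv-I_estimate_proposition}--\ref{difference_estimate_proposition} together with the rapid growth of the frequency scales $N_{j,k}$. Since each $v_k$ solves $\partial_tv_k-\Delta v_k=-\mathbb P\div(\bar v_{k+1}\otimes\bar v_{k+1})$ with $v_k(0)=0$, and the pointwise bounds below make $\sum_kv_k$ and $\sum_k\partial_tv_k$ converge uniformly on compact subsets of $(0,T]$, summing gives $\partial_tv-\Delta v=-\mathbb P\div\sum_{k\geq1}\bar v_k\otimes\bar v_k$, i.e.\ \eqref{v_equation} with $f=v\otimes v-\sum_{k\geq1}\bar v_k\otimes\bar v_k$. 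Writing $v=\bar v+(v-\bar v)$ and $\bar v=\sum_{k\geq0}\bar v_k$, the diagonal blocks $\bar v_k\otimes\bar v_k$ $(k\geq1)$ cancel, leaving
\[f=\bar v_0\otimes\bar v_0+\sum_{k\neq k'}\bar v_k\otimes\bar v_{k'}+\bar v\otimes(v-\bar v)+(v-\bar v)\otimes\bar v+(v-\bar v)\otimes(v-\bar v).\]
Because $\bar v_0(x,t)=e^{-t}\sin(\eta_1\cdot x)\theta_1$ is a parallel shear flow (recall $\theta_1\cdot\eta_1=0$), $\mathbb P\div(\bar v_0\otimes\bar v_0)=0$ and that term is discarded.

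For the pointwise bounds, from $\bar v_k=-\sum_jN_{j,k}e^{-N_{j,k}^2t}\Delta\psi_{j,k}$ and \eqref{eq:Dpsi_bounds} one has $\|\nabla^n\bar v_k(t)\|_{L^\infty}\lesssim\sum_jN_{j,k}^{1+n}e^{-N_{j,k}^2t}\lesssim_nt^{-(1+n)/2}$ (as $x^{(1+n)/2}e^{-x}$ is bounded), and $\supp\bar v_k(t)\subset\Omega_k$, so $\|\nabla^n\bar v_k(t)\|_{L^p}\lesssim|\Omega_k|^{1/p}t^{-(1+n)/2}\lesssim2^{-k/p}t^{-(1+n)/2}$ by \eqref{Omega_volume_estimate}. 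Since $v_k=\div R_k$ and $\bar v_k=\div\bar R_k$, Proposition~\ref{difference_estimate_proposition} gives $\|\nabla^n(v_k-\bar v_k)(t)\|_{L^\infty}\leq\|\nabla^{n+1}(R_k-\bar R_k)(t)\|_{L^\infty}\lesssim\epsilon_0N_{1,k+1}^{-\alpha}(t^{-(1+n)/2+\alpha}+1)$; rewriting $N_{1,k+1}^{-\alpha}t^{-(1+n)/2+\alpha}=(t/N_{1,k+1})^\alpha t^{-(1+n)/2}$ yields \eqref{vk-pointwise-bounds}. Summing in $k$, the super-geometric growth $N_{1,k+1}\sim N_{1,k}^b$ makes $\sum_{j,k}N_{j,k}^{1+n}e^{-N_{j,k}^2t}$ controlled, up to constants, by its largest term $\lesssim t^{-(1+n)/2}$ (geometric decay away from the critical scale $N_{j,k}\sim t^{-1/2}$ on both sides), while $\sum_k\epsilon_0N_{1,k+1}^{-\alpha}(t^{-(1+n)/2+\alpha}+1)\lesssim\epsilon_0A^{-\alpha b}(t^{-(1+n)/2}+1)$ since $T\leq1$; this is \eqref{v-pointwise-bounds}.

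For \eqref{f_bound}, each surviving term of $f$ is estimated in $C^\kappa$ with up to $\bar n+1$ derivatives. The cross terms with $v-\bar v$ are handled by the Leibniz rule, \eqref{v-pointwise-bounds} for $\bar v$, and the summed bound $\|\nabla^n(v-\bar v)(t)\|_{C^\kappa}\lesssim\epsilon_0A^{-\alpha b}(t^{-(1+n)/2+\alpha}+1)$ (interpolating the $n$th and $(n+1)$th derivative bounds above and taking $\kappa$ small relative to $\alpha$). The off-diagonal terms $\bar v_k\otimes\bar v_{k'}$, $k<k'$, are the key point: the separation $N_{1,k'}\geq N_{1,k+1}\gg N_{J_d,k}$ forces $\bar v_{k'}(t)$ to be $\epsilon_0$-small relative to $\|\bar v_k(t)\|_{L^\infty}$ wherever $\bar v_k(t)$ has not yet decayed --- the fixed-time analogue of the nonlocal-interaction estimate in \eqref{eq:NonlocalInteractions} --- so after summing in $k,k'$ this block is $\lesssim\epsilon_0(t^{-1-n/2+\alpha}+1)$ once $A$ is large enough (depending on $\epsilon_0,\kappa,\bar n$). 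The many lower-order error terms are absorbed exactly as in Propositions~\ref{barv-I_estimate_proposition}--\ref{difference_estimate_proposition}, and \eqref{f_bound} follows.

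Finally, for \eqref{v-critical-bounds}, the contribution of $v-\bar v$ is $O(1)$ since $\int_{t'}^ts^{-1/2}\epsilon_0A^{-\alpha b}(s^{-1/2+\alpha}+1)\,ds$ and its square are $\lesssim\epsilon_0A^{-\alpha b}\lesssim1$. For $\bar v$ one bounds $\|\bar v(s)\|_{L^\infty}\leq\sum_{j,k}N_{j,k}e^{-N_{j,k}^2s}$, expands the square, and integrates term by term; with $u=N_{j,k}^2s$, a summand contributes a full $O(1)$ only when its time-bump lies in $[t',t]$, i.e.\ $N_{j,k}\in[t^{-1/2},(t')^{-1/2}]$, and otherwise is exponentially small (if $N_{j,k}^2t'\gg1$) or $\lesssim N_{j,k}t^{1/2}$ with total $O(1)$ (if $N_{j,k}^2t\ll1$); off-diagonal pairs reduce to the same count via $\tfrac{N_{j,k}N_{j',k'}}{N_{j,k}^2+N_{j',k'}^2}\leq\tfrac12$ and super-geometric summation. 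Hence both quantities are $\lesssim_d1+\#\{(j,k):N_{j,k}\in[t^{-1/2},(t')^{-1/2}]\}\lesssim_d1+\log_b\tfrac{\log(1/t')}{\log(1/t)}$, the exponents $\log_AN_{j,k}$ growing geometrically with ratio $\gtrsim b^{1/J_d}$ and bounded multiplicity. \emph{The main obstacle} is turning this double logarithm into $1+(\log A)^{-1}\log(t/t')$: when $\log(1/t)\geq\log A$ it is immediate from concavity ($\log u\leq u-1$ with $u=\log(1/t')/\log(1/t)$), giving $\log_b\tfrac{\log(1/t')}{\log(1/t)}\lesssim\tfrac{\log(t/t')}{\log A}$; when $t$ is larger one splits $[t',t]$ at $s\sim A^{-1}$, applies the previous case on $[t',A^{-1}]$, and notes that on $[A^{-1},t]$ only the single bounded component $\bar v_0$ is active (every $N_{j,k}$ with $k\geq1$ has $N_{j,k}^2A^{-1}\gg1$), so that piece is $O(1)$. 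Assembling both regimes yields \eqref{v-critical-bounds}.
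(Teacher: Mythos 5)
Your argument is essentially the paper's, with two genuine differences worth noting.

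\textbf{Decomposition of $f$.} The paper sets $f=\sum_{k\geq0}f_k+\sum_{k_1\neq k_2}v_{k_1}\otimes v_{k_2}$ with $f_k=v_k\otimes v_k-\bar v_k\otimes\bar v_k$, whereas you split $v=\bar v+(v-\bar v)$ and arrive at $f=\sum_{k\neq k'}\bar v_k\otimes\bar v_{k'}+2\bar v\odot(v-\bar v)+(v-\bar v)^{\otimes 2}$ after discarding $\bar v_0\otimes\bar v_0$ by the shear-flow identity $\mathbb P\div(\bar v_0\otimes\bar v_0)=0$ (the paper discards $v_0\otimes v_0$ by the same device). Both choices give the same $\mathbb P\div f$ and lead to the same kind of bound; your grouping is a bit cleaner because the cross terms $\bar v\otimes(v-\bar v)$ directly inherit the $\epsilon_0 N_{1,k+1}^{-\alpha}$ gain from Proposition~\ref{difference_estimate_proposition}. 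The one point you (and the paper) leave implicit is where the $\epsilon_0$ comes from for the purely-$\bar v$ block $\sum_{k\neq k'}\bar v_k\otimes\bar v_{k'}$: the raw bound is $\lesssim t^{-1-\frac n2+\beta}$ with $\beta=\tfrac12(1-b^{-1/J_d})-O(\kappa)>\alpha$ and an $O(1)$ constant, which is \emph{not} $\lesssim\epsilon_0(t^{-1-n/2+\alpha}+1)$ on its face. The gain comes from the fact that every pair in the sum has $\max(k,k')\geq1$, so the supremum in $t$ is attained at $t\sim N_{j,k'}^{-2}$ with $N_{j,k'}\geq N_{1,1}\sim A^b$; at such $t$ the ratio of the bound to the target is $\sim A^{b(1/b-1+2\alpha)}/\epsilon_0$, which $\to0$ as $A\to\infty$ since $1/b-1+2\alpha<0$. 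This line of justification is worth adding to support "$\lesssim\epsilon_0\ldots$ once $A$ is large enough."

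\textbf{The critical bound \eqref{v-critical-bounds}.} You first obtain the tight count $\lesssim 1+\log_b\big[\log(1/t')/\log(1/t)\big]$ and then relax it by concavity (valid since $b=3$ is fixed with $\log b\gtrsim1$), while the paper simply lower-bounds the ratio $N_{j,k+1}/N_{j,k}\gtrsim A^{b-1}$ and counts a geometric sequence, landing directly on $1+(\log A)^{-1}\log(t/t')$. Your intermediate double-logarithmic bound is strictly sharper but unnecessary for the stated estimate, and requires the extra case split at $s\sim A^{-1}$; the paper's route is shorter. Both are correct.

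Everything else — the mild form of \eqref{v_equation}, the derivation of \eqref{vk-pointwise-bounds} and \eqref{v-pointwise-bounds} from \eqref{eq:Dpsi_bounds} and Proposition~\ref{difference_estimate_proposition}, the support-volume factor $2^{-k/p}$ — matches the paper.
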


\begin{proof}
From the definition and \eqref{eq:Dpsi_bounds}, we easily have
\begin{align}\label{vk_bar_estimate}
    \|\nabla^n\overline v_k(t)\|_{p}\lesssim |\Omega_k|^\frac1p\|\nabla^n\overline v_k(t)\|_{\infty}&\lesssim 2^{-k/p}\sum_jN_{j,k}^{1+n}e^{-N_{j,k}^2t}
\end{align}
which yields the estimate for $\overline v$ in \eqref{v-pointwise-bounds} after summing in $k\geq0$. Combining this estimate for $\bar v_k$ with Proposition~\ref{difference_estimate_proposition},
\begin{equation}\label{alt_vk_bound}
\begin{split}
    \|\nabla^nv_k(t)\|_p&\leq \|\nabla^n(v_k(t)-\bar{v}_k(t))\|_p+\|\nabla^n\bar{v}_k(t)\|_p\\
    &\lesssim \|\nabla^{n+1}(R_k(t)-\bar{R}_k(t))\|_p+\|\nabla^n\bar{v}_k(t)\|_p\\
    &\lesssim N_{1,k+1}^{-\alpha}(t^{-\frac12-\frac n2+\alpha}+1)+2^{-k/p}\sum_jN_{j,k}^{1+n}e^{-N_{j,k}^2t},
\end{split}
\end{equation}
which implies \eqref{vk-pointwise-bounds}--\eqref{v-pointwise-bounds}.

To construct $f$, recall that
\[
\begin{split}
v_k(t) &= -\int_0^{t} e^{(t-s)\Delta }\mathbb P\div (\bar v_{k+1}\otimes \bar v_{k+1})(s) \, ds\\
&=\int_0^{t} e^{(t-s)\Delta }\mathbb P\div (-v_{k+1}\otimes  v_{k+1}+f_{k+1})(s) \, ds
\end{split}
\]
for $k\geq0$, with
\[
f_k=v_{k} \otimes  v_{k}-\bar v_{k}\otimes \bar v_{k}=(v_{k}-\bar v_{k}) \otimes v_{k} + \bar v_{k} \otimes (v_{k}-\bar v_{k}).
\]
Summing in $k\geq0$, we compute that $(v,f)$ satisfies \eqref{v_equation}, which we express equivalently in mild form as
\begin{align*}
    v(t)=\int_0^te^{(t-s)\Delta}\mathbb P\div (-v(s)\otimes v(s)+f(s))ds,
\end{align*}
as long as
\begin{align*}
    \mathbb P\div f&=\mathbb P\div\left(v\otimes v+\sum_{k\geq0}(-v_{k+1}\otimes v_{k+1}+f_{k+1})\right)\\
    &=\mathbb P\div\left(v_0\otimes v_0+\sum_{k\geq1} f_k   +\sum_{k_1\neq k_2}v_{k_1}\otimes v_{k_2}\right).
\end{align*}
This requirement is met by defining
\[f=\sum_{k\geq0} f_k   +\sum_{k_1\neq k_2}v_{k_1}\otimes v_{k_2}.\]
Indeed, $\bar v_0$ is a shear flow due to \eqref{def_psi_0} and the fact that $a_{j,0}$ are constant, so we have
\begin{align*}
    \mathbb P\div v_0\otimes v_0=\mathbb P\div (\bar v_0\otimes\bar v_0+f_0)=\mathbb P\div f_0.
\end{align*}

Thanks to Proposition~\ref{difference_estimate_proposition} and \eqref{v-pointwise-bounds}, and taking $\alpha>0$ smaller as needed,
\begin{align*}
\|\nabla^nf_k(t)\|_{C^\kappa} &\lesssim \sum_{i=0}^n\|\nabla^i(v_{k}-\bar v_{k})\|_{C^\kappa}(\|\nabla^{n-i}v_{k}\|_{C^\kappa}+\|\nabla^{n-i}\bar v_{k}\|_{C^\kappa})\\
&\lesssim\sum_{i=0}^n\epsilon_0N_{1,k+1}^{-2\alpha}(t^{-\frac12(1+i+\kappa)+2\alpha}+1)(t^{-\frac12(1+n-i+\kappa)}+1)\\
&\lesssim \epsilon_0N_{1,k+1}^{-2\alpha}(t^{-1-\frac n2+\alpha+(\alpha-\kappa)}+1).
\end{align*}
Summing in $k$, we reach the bound claimed in \eqref{f_bound} for the first sum in $f$.

For the other, we exploit the symmetry between $k_1$ and $k_2$ to estimate
\begin{align*}
    \Big\|\nabla^n\sum_{k_1\neq k_2}v_{k_1}\otimes v_{k_2}\Big\|_{C^\kappa}&\lesssim \sum_{i=0}^n\sum_{k_1<k_2}\|\nabla^iv_{k_1}\|_{C^\kappa}\|\nabla^{n-i}v_{k_2}\|_{C^\kappa}\\
    &\eqcolon I_1+I_2+I_3+I_4,
\end{align*}
decomposing the upper bound for both factors based on the two terms in \eqref{alt_vk_bound}. We have
\begin{align*}
    I_1&=\sum_i\sum_{k_1<k_2}(N_{1,k_1+1}^{-\alpha}t^{-\frac12-\frac {i+\kappa}2+\alpha})(N_{1,k_2+1}^{-\alpha}t^{-\frac12-\frac {n-i+\kappa}2+\alpha})\lesssim t^{-1-\frac n2+2\alpha-\kappa},
\end{align*}
an acceptable bound considering $\alpha-\kappa>0$ and $t\in(0,1)$. Next, again making $\alpha$ smaller as necessary,
\begin{align*}
    I_2&=\sum_i\sum_{k_1<k_2}(N_{1,k_1+1}^{-2\alpha}t^{-\frac12-\frac {i+\kappa}2+2\alpha})\Big(\sum_jN_{j,k_2}^{1+n-i+\kappa}e^{-N_{j,k_2}^2t}\Big)\\
    &\lesssim \sum_i \Big(t^{-\frac12-\frac {i+\kappa}2+2\alpha}\sum_j\sum_{k_2}N_{j,k_2}^{1+n-i+\kappa}e^{-N_{j,k_2}^2t}\Big)\\
    &\lesssim\sum_i\Big(t^{-\frac12-\frac {i+\kappa}2+2\alpha}\sum_j t^{-\frac12(1+n-i+\kappa)}\Big)\\
    &\lesssim t^{-1-\frac n2+2\alpha-\kappa},
\end{align*}
which leads to the claimed bound. Note that $I_3$ is the complementary term to $I_2$, in which $k_1$ and $k_2$ are interchanged, is analogous and we omit it. Finally,
\begin{align*}
    I_4&=\sum_i\sum_{k_1<k_2}\Big(\sum_jN_{j,k_1}^{1+n-i+\kappa}e^{-N_{j,k_1}^2t}\Big)\Big(\sum_jN_{j,k_2}^{1+i+\kappa}e^{-N_{j,k_2}^2t}\Big)\\
    &\lesssim \sum_i\sum_j\sum_{k_2}N_{J_d,k_2-1}^{1+i+\kappa}N_{j,k_2}^{1+n-i+\kappa}e^{-N_{j,k_2}^2t}.
\end{align*}
Based on \eqref{N_definition}, one computes
\begin{align*}
    N_{J_d,k_2-1}&\sim N_{j,k_2}^{b^{-j/J_d}}
\end{align*}
and therefore, noting that the dominant contribution occurs at $i=0$,
\begin{align*}
    I_4&\lesssim \sum_i\sum_j\sum_{k_2}N_{j,k_2}^{1+n-i+\kappa+(1+i+\kappa)b^{-j/J_d}}e^{-N_{j,k_2}^2t}\\
    &\lesssim \sum_j\sum_{k_2}N_{j,k_2}^{n+2-(1-b^{-j/J_d})+2\kappa}e^{-N_{j,k_2}^2t}\\
    &\lesssim t^{-1-\frac n2+\frac12(1-b^{-1/J_d}-4\kappa)}.
\end{align*}
Taking $\alpha,\kappa>0$ smaller as needed, we conclude \eqref{f_bound}.

To show \eqref{v-critical-bounds}, it suffices to prove the estimate on $L^1(t^{-\frac12}dt;L^\infty)$, from which the $L^2(L^\infty)$ estimate follows by interpolation with \eqref{v-pointwise-bounds}. From \eqref{alt_vk_bound}, we have
\begin{align*}
    \|v\|_{L^1([t',t],t^{-\frac12}dt;L^\infty)}&\lesssim\sum_k\int_{t'}^t\left(N_{1,k+1}^{-\alpha}s^{-1+\alpha}+\sum_jN_{j,k}s^{-\frac12}e^{-N_{j,k}^2s}\right)ds\eqcolon J_1+J_2.
\end{align*}
For $J_1$, we have
\begin{align*}
    J_1=\sum_kN_{1,k+1}^{-\alpha}\int_{t'}^ts^{-1+\alpha}ds&\lesssim t^\alpha\lesssim1.
\end{align*}
For the other, we decompose:
\begin{align*}
J_2=&\sum_j\Big(\sum_{k:N_{j,k}< t^{-1/2}}\int_{t'}^tN_{j,k}s^{-\frac12}ds+\sum_{t^{-1/2}\leq N_{j,k}\leq (t')^{-1/2}}\int_{t'}^tN_{j,k}s^{-\frac12}e^{-N_{j,k}^2s}ds\\
&\qquad+\sum_{k:N_{j,k}>(t')^{-1/2}}\int_{t'}^tN_{j,k}^2e^{-N_{j,k}^2s}ds\Big).
\end{align*}
Explicitly carrying out the integral in the first term, clearly the sum is $O(1)$. For the second, we expand the region of integration to $(0,\infty)$. By scaling, the integral is $O(1)$ uniformly in $k$. Thus the sum is bounded, up to a constant multiple, by the number of terms. For the third term, the integrals are controlled by $O(\exp(-N_{j,k}^2t'))$, the sum of which over $N_{j,k}>(t')^{-1/2}$ is clearly $O(1)$. In total,
\begin{align*}
    J_2&\lesssim 1+\sup_j\#\{k:t^{-1/2}\leq N_{j,k}\leq (t')^{-1/2}\}.
\end{align*}
Observe that
\begin{align*}
    \frac{N_{j,k+1}}{N_{j,k}}\sim A^{b^{k+1+(j-1)/J_d}-b^{k+(j-1)/J_d}}\gtrsim A^{b-1}.
\end{align*}
Thus, the number of $k$'s with $(t')^{-1/2}\leq N_{j,k}\leq t^{-1/2}$ is bounded by the corresponding quantity for geometric series with ratio $A^{b-1}$. We conclude
\begin{align*}
J_2\lesssim 1+\log_{A^{b-1}}((t')^{-\frac12}/t^{-\frac12})
\end{align*}
which, combined with the estimate for $J_1$, implies \eqref{v-critical-bounds}.
\end{proof}

\section{Construction of the corrector}\label{sec:corrector}

Let $U$ be any classical solution to the Navier--Stokes equations on the time interval $[0,T]$ which, once again, we replace with $[-T_*, T-T_*]$ by translation. We may define
\begin{align*}
    \sup_{0\leq n\leq 10}\|\nabla^nU\|_{L_{t,x}^\infty(\mathbb T^d\times[0,T-T_*])}\eqcolon C_U<\infty.
\end{align*}
The family of solutions $u^{(\sigma)}$ claimed in Theorem~\ref{thm-non-unique} will arise by modulating a frequency scale $N_0>0$ (see Section~\ref{sec:proof}). Having constructed $v$ on $\mathbb T^d\times[0,\infty)$, we extend it by $0$ to the full time interval $\mathbb R$, then rescale it to $v^{N_0}$, which we regard as a $2\pi/N_0$-periodic vector field on $\mathbb R^d\times[0,\infty)$. Here, for an arbitrary vector field on $\mathbb R^d$, we define the natural Navier--Stokes rescaling
\begin{align*}
    V^{N_0}(x,t)\coloneqq N_0V(N_0x,N_0^2t).
\end{align*}
Recall that critical norms such as $L^\infty((0,\infty); \dot W^{-1,\infty}(\mathbb R^d))$ are invariant under the transformation $V\mapsto V^{N_0}$.

From there, we construct the exact Navier--Stokes solution as
\[u = U + v^{N_0} + w^{N_0}\]
with $w$ a small corrector. To simplify the computation, we instead construct the rescaled solution $u^{1/N_0}=U^{1/N_0}+v+w$. The corrector $w$ should satisfy
\[
\begin{split}
\partial_tw-\Delta w+\mathbb P\div(w\otimes w+2(U^{1/N_0}+v)\odot w)&=-\mathbb P\div (f+2 U^{1/N_0}\odot v),\\
w|_{t=0}&=0.
\end{split}
\]
After the translation and rescaling, the lifetime of $U^{1/N_0}$ is $[-N_0^2T_*,N_0^2(T-T_*)]$.

\subsection{Semigroup estimates}

We apply the semigroup theory and fixed point argument to construct $w$. Let $\bar T=\min\{N_0^2(T-T_*),\bar C\}$ for some large $\bar C>1$ to be specified. For suitable $\alpha$ and $\kappa$ we define the Banach spaces
\begin{equation}\label{def-X}
\begin{split}
X&=\Big\{V\in C^0((0,\bar T]; C^{1,\kappa}(\mathbb T^d; \mathbb R^d)): \\
& \qquad\qquad \|V\|_{X}:= \sup_{t\in(0,\bar T]}(t^{\frac{1-\alpha}2} \|V\|_{L^\infty}+t^{\frac{2-\alpha}2} \|\nabla V\|_{C^{\kappa}})<\infty\Big\}.
\end{split}
\end{equation}
and
\begin{equation}\label{def-Y}
\begin{split}
Y&=\Big\{\phi\in C^0((0,\bar T]; C^{1,\kappa}(\mathbb T^d; \mathrm{Sym}^{d})): \\
& \qquad\qquad \|\phi\|_{Y}:= \sup_{t\in(0,\bar T]}(t^{1-\alpha} \|\phi\|_{L^\infty}+t^{\frac32-\alpha} \|\nabla \phi\|_{C^{\kappa}})<\infty\Big\}.
\end{split}
\end{equation}
Then we have the following product rule, which is immediate from the definitions and the estimate $\|gh\|_{C^{1,\kappa}}\lesssim\|g\|_{C^{1,\kappa}}\|h\|_{L^\infty}+\|g\|_{L^\infty}\|h\|_{C^{1,\kappa}}$.
\begin{lemma}\label{product_X_Y_lemma}
    If $g,h\in X$, then $g\odot h\in Y$ with
    \begin{align*}
        \|g\odot h\|_Y\lesssim \|g\|_X\|h\|_X.
    \end{align*}
\end{lemma}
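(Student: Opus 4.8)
The plan is to deduce the bound purely from the definitions \eqref{def-X}--\eqref{def-Y} of $X$ and $Y$, together with the componentwise Leibniz/product estimate in H\"older spaces quoted just above the lemma, namely $\|fg\|_{C^{1,\kappa}}\lesssim\|f\|_{C^{1,\kappa}}\|g\|_{L^\infty}+\|f\|_{L^\infty}\|g\|_{C^{1,\kappa}}$ (valid on the compact manifold $\mathbb T^d$ for $0<\kappa<1$), and the elementary identity $\|h\|_{C^{1,\kappa}}\sim\|h\|_{L^\infty}+\|\nabla h\|_{C^{\kappa}}$. Since $f\odot g$ has entries $\tfrac12(f_ig_j+f_jg_i)$, every estimate reduces to a scalar product, so I would just treat $f\odot g$ as a generic product $fg$.

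First I would handle the zeroth-order part of the $Y$-norm directly: $\|(f\odot g)(t)\|_{L^\infty}\le\|f(t)\|_{L^\infty}\|g(t)\|_{L^\infty}$, and since $\tfrac{1-\alpha}{2}+\tfrac{1-\alpha}{2}=1-\alpha$,
\[
t^{1-\alpha}\|(f\odot g)(t)\|_{L^\infty}\le\big(t^{\frac{1-\alpha}{2}}\|f(t)\|_{L^\infty}\big)\big(t^{\frac{1-\alpha}{2}}\|g(t)\|_{L^\infty}\big)\le\|f\|_{X}\|g\|_{X}.
\]
Next, for the first-order part, I would use $\|\nabla(f\odot g)\|_{C^\kappa}\le\|f\odot g\|_{C^{1,\kappa}}$ together with the product estimate to get
\[
\|(f\odot g)(t)\|_{C^{1,\kappa}}\lesssim\big(\|f\|_{L^\infty}+\|\nabla f\|_{C^\kappa}\big)\|g\|_{L^\infty}+\|f\|_{L^\infty}\big(\|g\|_{L^\infty}+\|\nabla g\|_{C^\kappa}\big),
\]
all norms at time $t$. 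Inserting the $X$-weights $\|f\|_{L^\infty}\lesssim t^{-\frac{1-\alpha}{2}}\|f\|_X$ and $\|\nabla f\|_{C^\kappa}\lesssim t^{-\frac{2-\alpha}{2}}\|f\|_X$ (and likewise for $g$), and noting $\tfrac{2-\alpha}{2}+\tfrac{1-\alpha}{2}=\tfrac32-\alpha$, each of the genuine cross terms $\|\nabla f\|_{C^\kappa}\|g\|_{L^\infty}$ and $\|f\|_{L^\infty}\|\nabla g\|_{C^\kappa}$ carries exactly $t^{-(3/2-\alpha)}\|f\|_X\|g\|_X$, while the term $\|f\|_{L^\infty}\|g\|_{L^\infty}$ carries $t^{-(1-\alpha)}\|f\|_X\|g\|_X$, which is strictly better after multiplying by $t^{3/2-\alpha}$ (the surplus is a power $t^{1/2}$, bounded on $(0,\bar T]$). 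Multiplying through by $t^{3/2-\alpha}$ therefore gives $t^{3/2-\alpha}\|\nabla(f\odot g)(t)\|_{C^\kappa}\lesssim\|f\|_X\|g\|_X$, and adding the two bounds yields $\|f\odot g\|_Y\lesssim\|f\|_X\|g\|_X$. The time continuity $f\odot g\in C^0((0,\bar T];C^{1,\kappa})$ is inherited from $f,g\in C^0((0,\bar T];C^{1,\kappa})$ since multiplication is continuous on the Banach algebra $C^{1,\kappa}(\mathbb T^d)$.

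I do not expect a real obstacle; the one point requiring care — and the reason one cannot simply quote the algebra inequality $\|uv\|_{C^\kappa}\le\|u\|_{C^\kappa}\|v\|_{C^\kappa}$ — is that the $L^\infty$ and the $\nabla$ components of $f\odot g$ in the $Y$-norm carry different weights, so one must keep them separate: a uniform $C^{1,\kappa}$ bound propagated naively into the $L^\infty$ slot would create a spurious $t^{-1/2}$ singularity at $t=0$. It then only remains to check, as above, that the exponents of all surviving terms add up to precisely the power $\tfrac32-\alpha$ dictated by the parabolic scaling built into $X$ and $Y$. (If one wishes to avoid even the harmless $t^{1/2}$ surplus, one can instead estimate $\|\nabla(f\odot g)\|_{C^\kappa}$ through the sharp Leibniz rule $\|(\nabla f)g\|_{C^\kappa}\lesssim\|\nabla f\|_{C^\kappa}\|g\|_{L^\infty}+\|\nabla f\|_{L^\infty}\|g\|_{C^\kappa}$ together with Gagliardo--Nirenberg interpolation to redistribute $\|\nabla f\|_{L^\infty}$ and $[g]_{C^\kappa}$ among the two $X$-slots, which again lands on the exponent $\tfrac32-\alpha$ exactly.)
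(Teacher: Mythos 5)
Your proof is correct and follows exactly the route the paper has in mind: the paper itself declares the lemma "immediate from the definitions and the estimate $\|fg\|_{C^{1,\kappa}}\lesssim\|f\|_{C^{1,\kappa}}\|g\|_{L^\infty}+\|f\|_{L^\infty}\|g\|_{C^{1,\kappa}}$," and you have simply carried out the power-counting in $t$ that this remark leaves implicit, including the correct observation that the harmless surplus $t^{1/2}$ from the $\|f\|_{L^\infty}\|g\|_{L^\infty}$ term is absorbed into the $\bar C$-dependent constant. The parenthetical about a "sharp Leibniz rule plus Gagliardo--Nirenberg" is unnecessary and somewhat circular (that interpolation-and-Young step is precisely how one proves the quoted product estimate), but it does not affect the validity of the main argument.
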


For $\phi\in Y$ and $0<t'\leq t\leq \bar T$, let the semigroup $S(t,t')\phi$ be the solution to the linearized equation 
\begin{equation}\label{semi-group}
\begin{split}
\partial_t S(t,t')\phi-\Delta S(t,t')\phi+2\mathbb P\div ((U^{1/N_0}+v)(t)\odot S(t,t')\phi)&=0,\\
S(t',t')\phi&=\mathbb P\div \phi(t').
\end{split}
\end{equation}

Note that we suppress the dependence of $S(t,t')$ on the parameter $1/N_0$. We shall show that the semigroup $S(t,t')$ acts approximately as $e^{(t-t')\Delta}\mathbb P\div$ up to a mild loss of $(t/t')^\epsilon$. 

\begin{proposition}\label{prop-semi}
For any $\phi\in Y$ and $0<t'\leq t\leq \bar T$, the estimate
\begin{equation}\notag
\|S(t,t')\phi\|_{L^\infty(\mathbb T^d)}+(t-t')^{\frac12} \|\nabla S(t,t')\phi\|_{C^{\kappa}(\mathbb T^d)}\lesssim_{\bar C} t^{-\frac12}(t')^{-1+\alpha}(t/t')^{\epsilon} \|\phi\|_Y
\end{equation}
holds.
\end{proposition}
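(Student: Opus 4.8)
The plan is to pass to the mild formulation and run a continuity (a~priori estimate) argument in a weighted space tailored to the right-hand side. Write $b:=U^{1/N_0}+v$ and $g:=S(\cdot,t')\phi$. Integrating \eqref{semi-group} by Duhamel,
\[
g(t)=e^{(t-t')\Delta}\mathbb P\div\phi(t')-2\int_{t'}^t e^{(t-s)\Delta}\mathbb P\div\big(b(s)\odot g(s)\big)\,ds ,\qquad t\in[t',\bar T].
\]
On $[t',\bar T]$ the drift $b$ belongs to $L^\infty_tC^{1,\kappa}_x$ (with a, possibly $t'$-dependent, bound), so standard parabolic theory for this divergence-form linear equation produces a unique such $g$, smooth in $x$ for $t>t'$ and satisfying $g(t)\to\mathbb P\div\phi(t')$ in $C^\kappa$ as $t\to t'{+}$. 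In particular the quantity $W(t):=\big(\|g(t)\|_{L^\infty}+(t-t')^{1/2}\|\nabla g(t)\|_{C^\kappa}\big)\big/\big(t^{-1/2}(t')^{-1+\alpha}(t/t')^{\epsilon}\big)$ is continuous on $(t',\bar T]$ with a finite limit at $t'$ (using $\|\mathbb P\div\phi(t')\|_{L^\infty},\|\phi(t')\|_{C^{1,\kappa}}\lesssim (t')^{-3/2+\alpha}\|\phi\|_Y$), so $M(\tau):=\sup_{(t',\tau]}W<\infty$ for every $\tau$, and it suffices to show $M(\bar T)\lesssim_{\bar C}\|\phi\|_Y$.

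The ingredients are: the heat-kernel estimates for $e^{\tau\Delta}\mathbb P\div$ used throughout Section~\ref{sec:est} (cf.\ \eqref{heat-decay-estimate}), in particular $\|\nabla e^{\tau\Delta}\mathbb P\div h\|_{C^\kappa}\lesssim \tau^{-1/2}\|h\|_{C^{1,\kappa}}\wedge \tau^{-1-\kappa/2}\|h\|_{L^\infty}$ and exponential decay for $\tau\gtrsim1$ on zero-mean data; the bounds on $b$ from Proposition~\ref{prop_f_estimate}, namely the pointwise control $\|\nabla^n v(s)\|_{L^\infty}\lesssim s^{-(1+n)/2}$ of \eqref{v-pointwise-bounds} (hence $\|v(s)\|_{C^{1,\kappa}}\lesssim s^{-1-\kappa/2}$) together with the logarithmically refined critical bound $\|v\|_{L^2([t',t];L^\infty)}^2\lesssim 1+(\log A)^{-1}\log(t/t')$ of \eqref{v-critical-bounds}, and the elementary $\sup_s s^{1/2}\|U^{1/N_0}(s)\|_{L^\infty}\lesssim_{\bar C}N_0^{-1}C_U$; and the $Y$-bounds $\|\phi(t')\|_{L^\infty}\le(t')^{-1+\alpha}\|\phi\|_Y$, $\|\nabla\phi(t')\|_{C^\kappa}\le(t')^{-3/2+\alpha}\|\phi\|_Y$. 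The linear term is handled by the $C^{1,\kappa}$ bound on $\phi(t')$ when $t$ is comparable to $t'$ (which exactly matches the right-hand side), and for $t$ away from $t'$ by writing $S(t,t')=S(t,t/2)\circ S(t/2,t')$ and propagating the already-controlled $L^\infty$ size at the intermediate time $t/2$ forward along the full linearized flow, so that the roughness of $\phi(t')$ is never felt at scale $t$; equivalently one works at the level of the tensor potential $R$ with $g=\div R$, in the spirit of Propositions~\ref{barv-I_estimate_proposition}--\ref{difference_estimate_proposition}.

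The decisive term is the drift integral. It is scale-invariant, and its $v$-part is \emph{not} small in $\sup_s s^{1/2}\|v(s)\|_{L^\infty}$, so it does not furnish a contraction; the point is rather that the $n$-fold iterated kernels $(t-s_1)^{-1/2}(s_1-s_2)^{-1/2}\cdots(s_{n-1}-s_n)^{-1/2}$ compose (pairing half-integrations) into the ordinary $\lfloor n/2\rfloor$-fold integration operator, so the Neumann series for $g$ converges super-exponentially, with the product of the $v$-factors bounded by Cauchy--Schwarz in time by $\exp\!\big(C\|v\|_{L^2([t',t];L^\infty)}^2\big)$. By \eqref{v-critical-bounds} this is $\lesssim_{\bar C} e^{C}(t/t')^{C/\log A}$, while the bounded background $U^{1/N_0}$ contributes at most an extra factor $\lesssim_{\bar C}1$ by Gronwall; combined with the commutator structure (Lemma~\ref{l:commutator}) needed for the $C^\kappa$-gradient component, this yields $M(\bar T)\lesssim_{\bar C}\|\phi\|_Y$ with $\epsilon:=C/\log A$, which can be made as small as desired by choosing $A$ large.

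The hardest part will be obtaining the Hölder norm of $\nabla S(t,t')\phi$ — not merely its $L^\infty$ size — without paying a half-derivative when propagating across a large time ratio: this is precisely what forces the detour through the tensor potential and a careful interpolation of the heat-kernel estimates (few derivatives on the kernel for $s$ near $t$, where $(t-s)^{-1/2}$ remains time-integrable, and more derivatives for $s$ bounded away from $t$), exploiting the $C^{1,\kappa}$-regularity built into $Y$ and into the derivative bounds \eqref{v-pointwise-bounds} on $v$. A secondary but essential bookkeeping point is that every implied constant must be uniform in the modulation parameter $N_0$; since the rescaled background $U^{1/N_0}$ is small in scale-invariant norms, one must verify the ``$\lesssim_{\bar C}$'' genuinely does not see $N_0$.
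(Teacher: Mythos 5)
Your proposal follows essentially the same route as the paper: the Duhamel representation, the two-sided heat-kernel estimates trading $\|\phi(t')\|_{L^\infty}$ against $\|\nabla\phi(t')\|_{C^\kappa}$ for the linear term, and an iteration of the drift integral in which your Neumann series with Cauchy--Schwarz pairing is exactly the mechanism behind the paper's fractional Gr\"onwall inequality applied to $h(t)=t^{1/2}\|S(t,t')\phi\|_{L^\infty}$, with the loss $(t/t')^{\epsilon}$, $\epsilon\sim(\log A)^{-1}$, coming from the critical bound \eqref{v-critical-bounds}. One remark: the semigroup-composition/tensor-potential detour you suggest for the linear term when $t\gg t'$ is unnecessary (and, as stated, risks circularity, since it presupposes bounds on $S(t,t/2)$ acting on general data); the direct estimate $\|e^{(t-t')\Delta}\mathbb P\div\phi(t')\|_{L^\infty}\lesssim (t-t')^{-1/2}\|\phi(t')\|_{L^\infty}\lesssim t^{-1/2}(t')^{-1+\alpha}\|\phi\|_Y$ already covers that regime, and the H\"older-gradient component is handled in the paper simply by splitting the Duhamel integral at $(t+t')/2$ and using the standard heat estimate \eqref{heat}, without any commutator lemma.
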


\begin{proof}
Recall that $\bar T\leq \bar C$ by definition. In the subsequent proof, we freely use that $t'<t\leq\bar C$ without explicitly indicating the dependence.

Applying Duhamel's formula to \eqref{semi-group} gives 
\begin{equation}\notag
\begin{split}
S(t,t')\phi &= e^{(t-t')\Delta}\mathbb P\div \phi(t')-2\int_{t'}^t e^{(t-s)\Delta}\mathbb P\div ((U^{1/N_0}+v)(s) \odot  S(s,t')\phi)\, ds\\
&=e^{(t-t')\Delta}\mathbb P\div \phi(t')-2\int_{t'}^{t' \vee {\frac t2}} e^{(t-s)\Delta}\mathbb P\div ((U^{1/N_0}+v)(s) \odot  S(s,t')\phi)\, ds\\
&\quad-2\int_{t' \vee {\frac t2}}^t e^{(t-s)\Delta}\mathbb P\div ((U^{1/N_0}+v)(s) \odot  S(s,t')\phi )\, ds\\
&=: I_1+I_2+I_3.
\end{split}
\end{equation}

To handle the Leray projection $\mathbb P$ in $I_1$, we either estimate $I_1$ in H\"older space $C^{\kappa}$ or exploit the $C^1\to L^\infty$ boundedness of the heat kernel. We take the smaller upper bound among the two types of estimates. First, we have 
\begin{equation}\notag
\|I_1\|_{L^\infty} \lesssim \|\nabla \phi(t')\|_{C^{\kappa}}\lesssim (t')^{-\frac32+\alpha}\|\phi\|_Y= (t')^{-\frac12}(t')^{-1+\alpha}\|\phi\|_Y,
\end{equation}
and 
\begin{equation}\notag
\|I_1\|_{L^\infty} \lesssim (t-t')^{-\frac12}\| \phi(t')\|_{L^\infty}\lesssim (t-t')^{-\frac12}(t')^{-1+\alpha}\|\phi\|_Y.
\end{equation}
Note that if $0<t'<\frac t2<\bar T$, we have $(t')^{-\frac12}>(t-t')^{-\frac12}\approx t^{-\frac12}$; if $0<\frac t2< t'<\bar T$, we also have
$(t-t')^{-\frac12}>(t')^{-\frac12}\approx t^{-\frac12}$. Therefore, we obtain from the previous two estimates that
\begin{equation}\notag
\|I_1\|_{L^\infty} \lesssim t^{-\frac12}(t')^{-1+\alpha}\|\phi\|_Y.
\end{equation}

The integral $I_2=0$ for $t'\geq \frac t2$. While for $t'<\frac t2$, we have
\begin{equation}\notag
\begin{split}
\|I_2\|_{L^\infty} &\lesssim \int_{t'}^{t' \vee {\frac t2}} (t-s)^{-\frac12} (C_U+\|v(s)\|_{L^\infty})\|S(s,t')\phi\|_{L^\infty} \, ds\\
&\lesssim t^{-\frac12}\int_{t'}^{t' \vee {\frac t2}}  (C_U+\|v(s)\|_{L^\infty})\|S(s,t')\phi\|_{L^\infty} \, ds\\
&= t^{-\frac12}\int_{t'}^{t' \vee {\frac t2}}  \left(s^{-\frac 12}(C_U+\|v(s)\|_{L^\infty})\right)\left( s^{\frac12}\|S(s,t')\phi\|_{L^\infty} \right)\, ds\\
\end{split}
\end{equation}
since $(t-s)^{-\frac12}\lesssim t^{-\frac12}$. 

On the other hand, we have
\begin{equation}\notag
\begin{split}
\|I_3\|_{L^\infty} &\lesssim \int_{t' \vee {\frac t2}}^t (t-s)^{-\frac12} (C_U+\|v(s)\|_{L^\infty})\|S(s,t')\phi\|_{L^\infty} \, ds\\
&\lesssim t^{-\frac12}\int_{t' \vee {\frac t2}}^t  \left((t-s)^{-\frac 12}(C_U+\|v(s)\|_{L^\infty})\right)\left( s^{\frac12}\|S(s,t')\phi\|_{L^\infty} \right)\, ds.
\end{split}
\end{equation}
Combining the estimates above gives
\begin{equation}\notag
\begin{split}
\|S(t,t')\phi\|_{L^\infty}&\lesssim t^{-\frac12}(t')^{-1+\alpha}\|\phi\|_Y\\
&\quad+ t^{-\frac12}\int_{t'}^{t' \vee {\frac t2}}  \left(s^{-\frac 12}(C_U+\|v(s)\|_{L^\infty})\right)\left( s^{\frac12}\|S(s,t')\phi\|_{L^\infty} \right)\, ds\\
&\quad+ t^{-\frac12}\int_{t' \vee {\frac t2}}^t  \left((t-s)^{-\frac 12}(C_U+\|v(s)\|_{L^\infty})\right)\left( s^{\frac12}\|S(s,t')\phi\|_{L^\infty} \right)\, ds.
\end{split}
\end{equation}

Denote $h(t)=t^{\frac12}\|S(t,t')\phi\|_{L^\infty(\mathbb T^d)}$. It then follows
\begin{equation}\notag
h(t)\lesssim (t')^{-1+\alpha}\|\phi\|_Y+\int_{t'}^t (s^{-\frac 12}+(t-s)^{-\frac 12})(C_U+\|v(s)\|_{L^\infty}) h(s) \,ds.
\end{equation}
Applying the fractional Gr\"onwall's inequality leads to
\begin{align*}\notag
h(t)&\lesssim (t')^{-1+\alpha}\|\phi\|_Y \exp \Big(O(\int_{t'}^t s^{-\frac12}(C_U+\|v(s)\|_{L^\infty})\,ds\\
&\qquad+(C_Ut^{\frac12}+\|s^{\frac12} v(s)\|_{L^\infty([t',t])})\int_{t'}^t (C_U+\|v(s)\|_{L^\infty})^2\, ds ) \Big).
\end{align*}
Therefore we need an estimate for $\|v\|_{L^1([t',t], t^{-\frac12}dt; L^\infty)}$ and $\|v\|_{L^2([t',t]; L^\infty)}$. Using \eqref{v-critical-bounds}, we obtain
\[h(t)\lesssim (t')^{-1+\alpha}\|\phi\|_Y \exp \left(O_U(1+(\log A)^{-1}\log(t/{t'})) \right), \]
emphasizing that the implicit constant in the exponential depends on the background $U$. Choosing $A\gg 1$ implies 
\[\|S(t,t')\phi\|_{L^\infty(\mathbb T^d)}\lesssim t^{-\frac12}(t')^{-1+\alpha}(t/{t'})^{\frac{\epsilon}2}\|\phi\|_Y.\]

Next, in order to estimate $\|\nabla S(t,t')\phi\|_{C^{\kappa}(\mathbb T^d)}$, we rewrite $\nabla S(t,t')\phi$ as
\begin{equation}\notag
\begin{split}
\nabla S(t,t')\phi &= e^{(t-t')\Delta}\nabla \mathbb P\div \phi(t')-2\int_{t'}^t e^{(t-s)\Delta}\nabla \mathbb P\div ((U^{1/N_0}(s)+v(s)) \odot  S(s,t')\phi)\, ds\\
&=e^{(t-t')\Delta}\nabla \mathbb P\div \phi(t')-2\int_{t'}^{\frac12(t+t')} e^{(t-s)\Delta}\nabla\mathbb P\div ((U^{1/N_0}(s)+v(s)) \odot  S(s,t')\phi)\, ds\\
&\quad-2\int_{\frac12(t+t')}^t e^{(t-s)\Delta}\nabla\mathbb P\div ((U^{1/N_0}(s)+v(s)) \odot  S(s,t')\phi )\, ds\\
&=: I_4+I_5+I_6.
\end{split}
\end{equation}
Applying the standard heat operator estimate
\begin{equation}\label{heat}
\|e^{t\Delta}\nabla^m \mathbb P g\|_{C^{r}}\lesssim t^{-\frac{m+r-s}2}\| g\|_{C^{s}}
\end{equation}
for all $m\geq 0$, $r,s\in \mathbb R$ satisfying $m+r-s\geq 0$, $r\notin\mathbb N$,
we have
\begin{align*}
\|I_4\|_{C^{\kappa}}&\lesssim (t-t')^{-\frac12} \|\mathbb P\div \phi(t')\|_{C^{\kappa}}\\
&\lesssim (t-t')^{-\frac12} \|\nabla \phi(t')\|_{C^{\kappa}}\\
&\lesssim (t-t')^{-\frac12}(t')^{-\frac32+\alpha} \|\phi\|_{Y}\\
&\lesssim (t-t')^{-\frac12}(t')^{-\frac{3+\kappa}2+\alpha}\|\phi\|_{Y},
\end{align*}
in the last line using that $t'\in(0,\bar C)$ to insert a factor of $(t')^{-\frac\kappa2}$. Alternatively, we may estimate 
\begin{equation}\notag
\|I_4\|_{C^{\kappa}}\lesssim (t-t')^{-\frac{2+\kappa}2} \| \phi(t')\|_{C^0}\lesssim (t-t')^{-\frac{2+\kappa}2}(t')^{-1+\alpha} \|\phi\|_{Y}.
\end{equation}
Taking whichever of the two estimates is stronger, we find
\begin{align*}
    \|I_4\|_{C^\kappa}&\lesssim\min\big\{(t-t')^{-\frac12}(t')^{-\frac{3+\kappa}2+\alpha},(t-t')^{-\frac{2+\kappa}2}(t')^{-1+\alpha}\big\}\|\phi\|_Y\\
    &=(t-t')^{-\frac12}(t')^{-1+\alpha} \min\big\{(t')^{-\frac{1+\kappa}2},(t-t')^{-\frac{1+\kappa}2}\big\}\|\phi\|_{Y}\\
    &\lesssim (t-t')^{-\frac12}(t')^{-1+\alpha}t^{-\frac{1+\kappa}2} \|\phi\|_{Y}.
\end{align*}

The terms $I_5$ and $I_6$ can be estimated similarly.
\end{proof}

\subsection{Fixed point argument}

Next we apply the fixed point argument to construct the corrector.
\begin{proposition}\label{w-exists-fixed-point-proposition}
There exists $\varepsilon>0$ such that for all $\delta\in(0, \varepsilon)$ and $A\gg 1$, there exists $w\in B_X(0,\delta)$ with $u^{1/N_0}=U^{1/N_0}+v+w$ solving \eqref{eq:NSE} on $[0,\bar T]\times\mathbb R^d$. Further, choosing $\bar C$ and $N_0$ large depending on $U$ and $T$, the solution $u^{1/N_0}$ extends to a classical solution on $[0,T]$.
\end{proposition}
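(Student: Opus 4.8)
The plan is to obtain $w$ as a fixed point of the Duhamel operator built from the linearized semigroup $S(t,t')$ of Proposition~\ref{prop-semi}. Written in mild form, the corrector equation becomes $w=\mathcal T w$ with
\[
\mathcal T w(t):=-\int_0^t S(t,s)\big[(w\otimes w)(s)+f(s)\big]\,ds,
\]
where $f$ is the right-hand side forcing of the corrector equation; in addition to the error of Proposition~\ref{prop_f_estimate} it carries the cross term $2U^{1/N_0}\odot v$ (needed so that $u^{1/N_0}$ is an exact solution of \eqref{eq:NSE}), and by that proposition, \eqref{v-pointwise-bounds}, and $\|U^{1/N_0}(t)\|_{L^\infty}\le N_0^{-1}C_U$ it lies in the space $Y$ of \eqref{def-Y} with $\|f\|_Y$ as small as desired once $A$ (hence $\epsilon_0$) and $N_0$ are large. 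Lemma~\ref{product_X_Y_lemma} gives $w\otimes w\in Y$ with $\|w\otimes w\|_Y\lesssim\|w\|_X^2$ for $w\in X$. First I would record the smoothing bound $\big\|\int_0^{\cdot}S(\cdot,s)\phi(s)\,ds\big\|_X\lesssim_{\bar C,U}\bar C^{\,\alpha/2}\|\phi\|_Y$: this follows by integrating the pointwise estimate of Proposition~\ref{prop-semi} in $s$, the time kernels $\int_0^t s^{-1+\alpha-\epsilon}\,ds$ and $\int_0^t(t-s)^{-1/2}s^{-1+\alpha-\epsilon}\,ds$ being integrable for $\epsilon<\alpha$, after which the weights in \eqref{def-X} absorb the remaining singularity up to a bounded factor $t^{\alpha/2}$.

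Consequently $\|\mathcal T w\|_X\lesssim_{\bar C,U}\bar C^{\alpha/2}(\|w\|_X^2+\|f\|_Y)$ and $\|\mathcal T w_1-\mathcal T w_2\|_X\lesssim_{\bar C,U}\bar C^{\alpha/2}(\|w_1\|_X+\|w_2\|_X)\|w_1-w_2\|_X$. Fixing $\bar C$ (depending on $U,T$), then $\delta$ and $\epsilon_0$ small (the latter realized by taking $A$ large, using Proposition~\ref{prop_f_estimate} and the $A\gg1$ requirement of Proposition~\ref{prop-semi}), and finally $N_0$ large, these two estimates show that $\mathcal T$ maps $B_X(0,\delta)$ into itself and contracts it, so the contraction mapping principle yields $w\in B_X(0,\delta)$. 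Then $u^{1/N_0}=U^{1/N_0}+v+w$ satisfies the mild Navier--Stokes system on $(0,\bar T]$; since $u^{1/N_0}\in L^\infty_{\mathrm{loc}}((0,\bar T]\times\mathbb R^d)$ by \eqref{v-pointwise-bounds} and $w\in X$, parabolic bootstrapping upgrades it to a classical solution on $(0,\bar T]$, while the $L^2_{\mathrm{loc}}$ bounds on $v$ and $w$ identify it as a weak solution of \eqref{eq:NSE} on $[0,\bar T]$ in the sense of Definition~\ref{def:weak_solutions}.

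For the extension to the full lifespan: if $\bar T=N_0^2(T-T_*)$ there is nothing to prove, so suppose $\bar T=\bar C<N_0^2(T-T_*)$ and pass back to the original scale. At original time $t_1=\bar C/N_0^2$ the solution $u=U+v^{N_0}+w^{N_0}$ is smooth, and $v(\bar C)+w(\bar C)$ is a mean-zero periodic field with $\|v(\bar C)\|_{L^\infty}\lesssim\bar C^{-1/2}$ (by \eqref{v-pointwise-bounds}) and $\|w(\bar C)\|_{L^\infty}\lesssim\delta$; hence, by the scale invariance of $BMO^{-1}$ and the exponential heat decay of mean-zero periodic functions, $\|u(t_1)-U(t_1)\|_{BMO^{-1}(\mathbb R^d)}\lesssim\bar C^{-1/2}+\delta$, small once $\bar C$ is large and $\delta$ small. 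Because $U$ is classical on the \emph{fixed} interval $[0,T-T_*]$ with $N_0$-independent bounds, the Koch--Tataru well-posedness theory around $U$ produces a classical solution on $[t_1,T-T_*]$ from this small $BMO^{-1}$ perturbation; by uniqueness of bounded mild solutions it agrees with $u$ just past $t_1$, hence continues it, and $u^{1/N_0}$ extends to a classical solution up to $N_0^2(T-T_*)$. I expect this last step, together with its bookkeeping, to be the main obstacle: the fixed point must be truncated at rescaled time $\bar C$ because the Gr\"onwall exponent in Proposition~\ref{prop-semi} (notably $C_U(t-t')$) degenerates on long rescaled intervals, which forces the return to the original scale, and the several smallness requirements---$\epsilon_0$ through $A$, $\delta$, the $BMO^{-1}$ perturbation through $\bar C$, the lifespan through $N_0$---must then be arranged consistently in a fixed order of choice.
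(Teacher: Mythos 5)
Your proposal is correct and follows essentially the same route as the paper: the same fixed-point map $w\mapsto-\int_0^tS(t,t')(w\otimes w+f)\,dt'$ in $B_X(0,\delta)$, estimated exactly as you describe via Proposition~\ref{prop-semi}, Lemma~\ref{product_X_Y_lemma}, and Proposition~\ref{prop_f_estimate} with the same ordering of parameter choices ($\bar C$, then $\delta,\epsilon_0$ via $A$, finally $N_0$), and the same continuation past $\bar T$ by the smallness of $v(\bar T)+w(\bar T)$ together with perturbation/stability theory around the background (the paper invokes the Auscher--Dubois--Tchamitchian stability theorem at the rescaled level, which is the scale-equivalent of your Koch--Tataru argument around $U$). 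Your explicit inclusion of the cross term $2U^{1/N_0}\odot v$ in the forcing, with its $O(N_0^{-1})$ smallness in $Y$, is careful bookkeeping that the paper's displayed corrector equation suppresses, but it does not change the argument.
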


\begin{proof}
We construct $w$ as a fixed point of the map
\begin{align*}
    F(w)=-\int_0^tS(t,t')(w\otimes w+f+2 U^{1/N_0}\odot v)(t')\,dt'
\end{align*}
in $B_X(0,\delta)$. By Proposition~\ref{prop-semi}, for any $w\in B_X(0,\delta)$,
\begin{align*}
    \|F(w)\|_X&\leq\sup_{t\in(0,\bar T]}\int_0^t\Big(t^{\frac12(1-\alpha)}\|S(t,t')(w\otimes w+f+2 U^{1/N_0}\odot v)(t')\|_\infty\\
    &\qquad+t^{1-\frac\alpha2}\|\nabla S(t,t')(w\otimes w+f+2 U^{1/N_0}\odot v)(t')\|_{C^\kappa}\Big)\,dt'\\
    &\lesssim_{\bar C}\sup_{t\in(0,\bar T]}\int_0^t\Big(t^{\frac12(1-\alpha)}+t^{1-\frac\alpha2}(t-t')^{-\frac12}\Big)t^{-\frac12}(t')^{-1+\alpha}\left(\frac t{t'}\right)^\epsilon\\
    &\qquad \qquad \cdot\|w\otimes w+f+2 U^{1/N_0}\odot v\|_Y\,dt'\\
    &\lesssim \|w\otimes w\|_Y+\|f\|_Y+\|U^{1/N_0}\odot v\|_Y,
\end{align*}
choosing $\epsilon\in(0,\alpha)$. It follows from Lemma~\ref{product_X_Y_lemma} that 
\[ \|w\otimes w\|_Y\lesssim \|w\|_X^2.\]
On the other hand, in view of the definition \eqref{def-Y} and Proposition~\ref{prop_f_estimate}, we have
\begin{equation*}
\begin{split}
\|f\|_Y&=\sup_{t\in(0,\bar T]}(t^{1-\alpha}\|f\|_{L^\infty}+t^{\frac32-\alpha}\|\nabla f\|_{C^\kappa})\\
&\lesssim \epsilon_0\sup_{t\in(0,\bar T]}(t^{1-\alpha}(t^{-1+\alpha}+1)+t^{\frac32-\alpha}(t^{-\frac32+\alpha}+1))\\
&\lesssim \epsilon_0,
\end{split}
\end{equation*}
and 
\begin{equation*}
\begin{split}
\|U^{1/N_0}\odot v\|_Y&=\sup_{t\in(0,\bar T]}(t^{1-\alpha}\|U^{1/N_0}\odot v\|_{L^\infty}+t^{\frac32-\alpha}\|\nabla(U^{1/N_0}\odot v)\|_{C^\kappa})\\
&\lesssim 1/N_0\sup_{t\in(0,\bar T]}(t^{1-\alpha}\| v\|_{L^\infty}+t^{\frac32-\alpha}\| v\|_{C^\kappa}+t^{\frac32-\alpha}\|\nabla v\|_{C^\kappa})\\
&\lesssim 1/N_0\sup_{t\in(0,\bar T]}(t^{1-\alpha-\frac12}+t^{\frac32-\alpha-\frac12(1+\kappa)}+t^{\frac32-\alpha-\frac12(2+\kappa)})\\
&\lesssim 1/N_0. 
\end{split}
\end{equation*}
Therefore, for large enough $N_0>1$, we conclude
\begin{align*}
    \|F(w)\|_X&\leq C(\|w\|_X^2+\|f\|_Y+\|U^{1/N_0}\odot v\|_Y)\leq C'(\delta^2+\epsilon_0),
\end{align*}
for some constants $C,C'$ depending on $\bar C$. It follows that $F$ takes $B_X(0,\delta)$ to itself if $\delta>0$ is chosen smaller than $1/(2C')$, then $\epsilon_0>0$ is chosen smaller than $\delta/(2C')$. Similarly, if $w_1,w_2\in B_X(0,\delta)$,
\begin{align*}
    \|F(w_1)-F(w_2)\|_X\lesssim \delta\|w_1-w_2\|_X.
\end{align*}
Taking $\delta>0$ small enough, we conclude the existence of a fixed point $w\in B_X(0,\delta)$ of $F$. This proves the claim about $u$ on $[0,\bar T]$. For $N_0$ sufficiently large, it will be the case that $N_0^2(T-T_*)>\bar T=\bar C$. To extend the lifespan, we argue that global solution theory can be applied starting from initial data $u(\bar T)$. Observe that by \eqref{v-pointwise-bounds} and the fact that $w\in X$, the perturbation $v(\bar T)+w(\bar T)$ can be made to be arbitrarily small by arranging $\bar T$ to be sufficiently large (which in turn is achieved with a sufficiently large choice of $N_0$ and $\bar C$). Moreover, $U^{1/N_0}(\bar T)$ stays bounded in $BMO^{-1}$ by the scaling invariance. Taking $\bar T$ as large as needed depending on $U$, we conclude existence of $u$ up to time $T-T_*$ by (for instance) the stability theorem of Auscher, Dubois, and Tchamitchian~\cite{MR2062638}.
\end{proof}

\section{Proof of main results}\label{sec:proof}

Finally we show that the construction in Section \ref{sec:corrector} yields a solution to the Navier--Stokes equations satisfying the properties stated in Theorem \ref{main-thm} and Theorem \ref{thm-non-unique} as well, and hence complete the proof of the main results.

We first argue that the constructed velocity field $u$ in Section \ref{sec:corrector} is indeed a weak solution, by establishing the following general result,  which is of independent interest. 

\begin{lemma}\label{le-weak}
Let $u\in L^2(\mathbb{T}^d \times [0,T])\cap L^\infty([0,T];H^s(\mathbb{T}^d))$ for some $s\in \mathbb{R}$ be such that $u|_{[0,T_*)}$ and $u|_{(T_*,T]}$ are classical solutions of \eqref{eq:NSE} on $[0,T_*)$ and $(T_*,T]$ respectively. If
\[
\lim_{t\to T_*^-} u(t) = \lim_{t\to T_*^+} u(t), \qquad \text{in} \quad \mathcal{D}'(\mathbb{T}^d),
\]
then $u$ is a weak solution of the Navier--Stokes equations on $[0,T]$.
\begin{proof}
For any test function $\varphi \in \mathcal{D}_T$ and $t_0\in(T_*,T)$, we have
\[
\begin{split}
\int_{t_0}^T \int_{\mathbb{T}^d} &u\cdot \big(  \partial_t \varphi+ \Delta \varphi +  u \cdot \nabla \varphi  \big) \, dx dt +
\int_{\mathbb{T}^d} u(x,t_0) \cdot \varphi(x, t_0) \, dx\\
&= - \int_{t_0}^T \int_{\mathbb{T}^d} \varphi \cdot \big(  \partial_t u - \Delta u +  u \cdot \nabla u  +\nabla p\big) \, dx dt\\
&=0.
\end{split}
\]
On the other hand,
\[
\begin{split}
\lim_{t_0 \to T_*^+} &\int_{\mathbb{T}^d} u(x,t_0) \cdot \varphi(x, t_0) \, dx\\
&= \lim_{t_0 \to T_*^+} \int_{\mathbb{T}^d} u(x,t_0) \cdot \varphi(x, T_*) \, dx
+ \lim_{t_0 \to T_*^+} \int_{\mathbb{T}^d} u(x,t_0) \cdot (\varphi(x, t_0)-\varphi(x, T_*)) \, dx\\
&= \lim_{t_0 \to T_*^+} \int_{\mathbb{T}^d} u(x,t_0) \cdot \varphi(x, T_*) \, dx.
\end{split}
\]
Hence,
\[
\int_{T_*}^T \int_{\mathbb{T}^d} u\cdot \big(  \partial_t \varphi+ \Delta \varphi +  u \cdot \nabla \varphi  \big) \, dx dt = -\lim_{t_0 \to T_*^+} \int_{\mathbb{T}^d} u(x,t_0) \cdot \varphi(x, T_*) \, dx.
\]
Similarly,
\[
\begin{split}
\int_0^{T_*} \int_{\mathbb{T}^d} u\cdot \big(  \partial_t \varphi+ \Delta \varphi +  u \cdot \nabla \varphi  \big) \, dx dt &=-\int_{\mathbb{T}^d} u(x,0) \cdot \varphi(x, 0) \, dx\\
&\quad +\lim_{t_0 \to T_*^-} \int_{\mathbb{T}^d} u(x,t_0) \cdot \varphi(x, T_*) \, dx.
\end{split}
\]
Combining both integrals we get
\[
\begin{split}
\int_0^{T} \int_{\mathbb{T}^d} u\cdot \big(  \partial_t \varphi+ \Delta \varphi +  u \cdot \nabla \varphi  \big) \, dx dt &=-\int_{\mathbb{T}^d} u(x,0) \cdot \varphi(x, 0) \, dx\\
&\hspace{-2in} -\lim_{t_0 \to T_*^+} \int_{\mathbb{T}^d} u(x,t_0) \cdot \varphi(x, T_*) \, dx +\lim_{t_0 \to T_*^-} \int_{\mathbb{T}^d} u(x,t_0) \cdot \varphi(x, T_*) \, dx\\
&=-\int_{\mathbb{T}^d} u(x,0) \cdot \varphi(x, 0) \, dx,
\end{split}
\]
and thus $u$ satisfies the weak formulation in Definition~\ref{def:weak_solutions}.
\end{proof}

\end{lemma}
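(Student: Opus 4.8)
The plan is to verify Definition~\ref{def:weak_solutions} directly, integrating by parts against a test function and tracking the boundary contributions at $t=T_*$. Fix $\varphi\in\mathcal D_T$ and a time $t_0\in(T_*,T)$. Since $u$ is a classical solution on $(T_*,T]$, pairing the strong form of \eqref{eq:NSE} with $\varphi$ over $[t_0,T]\times\mathbb T^d$ and integrating by parts in space and time moves every derivative onto $\varphi$: the pressure term drops because $\div\varphi=0$, the endpoint contribution at $t=T$ vanishes because $\varphi\equiv0$ there, and the only surviving boundary term is at $t=t_0$. This gives
\[
\int_{t_0}^{T}\!\!\int_{\mathbb T^d} u\cdot\big(\partial_t\varphi+\Delta\varphi+u\cdot\nabla\varphi\big)\,dx\,dt = -\int_{\mathbb T^d} u(x,t_0)\cdot\varphi(x,t_0)\,dx.
\]
The same computation on $[0,t_0]\times\mathbb T^d$ for $t_0\in(0,T_*)$, where now the $t=0$ boundary term also survives, yields
\[
\int_{0}^{t_0}\!\!\int_{\mathbb T^d} u\cdot\big(\partial_t\varphi+\Delta\varphi+u\cdot\nabla\varphi\big)\,dx\,dt = \int_{\mathbb T^d} u(x,t_0)\cdot\varphi(x,t_0)\,dx - \int_{\mathbb T^d} u(x,0)\cdot\varphi(x,0)\,dx.
\]

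Next I would let $t_0\to T_*^\pm$ in the two identities. On the left-hand sides, dominated convergence applies: $u\otimes u\in L^1(\mathbb T^d\times[0,T])$ since $u\in L^2(\mathbb T^d\times[0,T])$, while $\partial_t\varphi,\nabla\varphi,\Delta\varphi$ are bounded, so the integrals converge to those over $(T_*,T]$ and $[0,T_*)$. For the boundary term at $t_0$ I would split $u(x,t_0)\cdot\varphi(x,t_0)=u(x,t_0)\cdot\varphi(x,T_*)+u(x,t_0)\cdot\big(\varphi(x,t_0)-\varphi(x,T_*)\big)$; the first piece converges to the pairing of $\lim_{t\to T_*^\pm}u(t)$ with $\varphi(\cdot,T_*)$ by the hypothesis on the one-sided distributional limits, and the second piece tends to $0$ because $\varphi(\cdot,t_0)\to\varphi(\cdot,T_*)$ in $C^\infty$ (hence in $H^{-s}$) while $\sup_t\|u(t)\|_{H^s}<\infty$ by assumption. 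Adding the two resulting identities, the boundary terms at $t=T_*$ appear with opposite signs and cancel exactly because $\lim_{t\to T_*^-}u(t)=\lim_{t\to T_*^+}u(t)$ in $\mathcal D'(\mathbb T^d)$. What is left is
\[
\int_0^T\!\!\int_{\mathbb T^d} u\cdot\big(\partial_t\varphi+\Delta\varphi+u\cdot\nabla\varphi\big)\,dx\,dt = -\int_{\mathbb T^d}u(x,0)\cdot\varphi(x,0)\,dx,
\]
which is the weak formulation with initial datum $u(\cdot,0)$; the a.e.\ divergence-free condition is inherited from the classical solutions on each side.

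I expect the one genuinely delicate point to be this passage to the limit in the boundary term at $T_*$: one must upgrade the merely distributional convergence of $u(t_0)$ to convergence when tested against the moving slice $\varphi(\cdot,t_0)$ rather than the fixed slice $\varphi(\cdot,T_*)$, and this is precisely where the uniform $L^\infty_tH^s_x$ hypothesis is needed. Everything else reduces to a routine integration by parts on the two sub-intervals, using the pressure-free structure of the weak formulation for divergence-free test functions.
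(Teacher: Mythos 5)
Your proposal is correct and follows essentially the same route as the paper: integrate the strong equation by parts against a divergence-free test function on $[0,t_0]$ and $[t_0,T]$, split the boundary term at $t_0$ into the fixed slice $\varphi(\cdot,T_*)$ plus the difference $\varphi(\cdot,t_0)-\varphi(\cdot,T_*)$, and let $t_0\to T_*^{\pm}$ so that the matching one-sided distributional limits cancel the interior boundary contributions. Your explicit justification of the vanishing of the moving-slice correction via the uniform $L_t^\infty H_x^s$ bound is exactly the role that hypothesis plays in the paper's argument, where it is used implicitly.
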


Moreover, we show that $u$ belongs to the claimed borderline spaces as follows.

\begin{proposition}\label{prop:borderline}
    With $u$ as defined in Section \ref{sec:corrector}, we have
        \[
    u\in L_t^{2,\infty}L_x^\infty\cap L_{t}^2L_x^p
    \]
    for all $p<\infty$. Furthermore, $u$ lies in the Koch--Tataru path space $X_{T-T_*}$ and is weak-* continuous in time into $BMO^{-1}$. When $d\geq3$, we additionally have
    \begin{align*}
        u\in L_t^\infty \dot W^{-1,\infty}_x.
    \end{align*}
\end{proposition}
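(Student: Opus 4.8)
The plan is to use the decomposition $u=U+v^{N_0}+w^{N_0}$ from Section~\ref{sec:corrector} and handle the three pieces separately. All of the critical target classes --- $L^{2,\infty}_tL^\infty_x$, $L^\infty_tBMO^{-1}_x$, $L^\infty_t\dot W^{-1,\infty}_x$, and the Koch--Tataru space $X_T$ --- have norms invariant under the parabolic rescaling $V\mapsto V^{N_0}$ (with $\|V^{N_0}\|_{X_T}=\|V\|_{X_{N_0^2T}}$), and $L^2_tL^p_x$ transforms by a Jacobian absorbed after summing over period cells, so it suffices to bound each of $U$, $v$, $w$ in the relevant space on the rescaled interval $[0,\bar T_0]$ with $\bar T_0:=N_0^2(T-T_*)$. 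The background $U$ is classical, hence bounded with all derivatives on a compact time interval, which places it in every class above (for $\dot W^{-1,\infty}$ and $BMO^{-1}$ one uses that a smooth mean-zero periodic field has a smooth bounded primitive). The corrector satisfies $w\in B_X(0,\delta)$ with $X$ the subcritical space \eqref{def-X}, so $\|w(t)\|_{L^\infty}\lesssim\delta\,t^{-(1-\alpha)/2}$ with $1-\alpha<1$; hence $w\in L^2_tL^\infty_x\subset L^{2,\infty}_tL^\infty_x$, $\int_0^{R^2}\|w(t)\|^2_{L^\infty}\,dt\lesssim\delta^2R^{2\alpha}$ gives $w\in X$, and the global continuation in Proposition~\ref{w-exists-fixed-point-proposition} keeps $w$ small (in $X$ and in $\dot W^{-1,\infty}$) at later times. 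So the real work is for the principal part $v$.

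For $v$ the soft bounds are immediate from Proposition~\ref{prop_f_estimate}: \eqref{v-pointwise-bounds} gives $\|v(t)\|_{L^\infty}\lesssim t^{-1/2}$, whence $v\in L^{2,\infty}_tL^\infty_x$ (as $t^{-1/2}\in L^{2,\infty}_t$) and $\sup_t t^{1/2}\|v(t)\|_{L^\infty}\lesssim1$, the sup-norm half of the $X$-norm. For $v\in L^2_tL^p_x$ the naive bound $\|v(t)\|_{L^p}\le\sum_k\|v_k(t)\|_{L^p}\lesssim t^{-1/2}$ obtained by summing \eqref{vk-pointwise-bounds} in $k$ is borderline non-integrable in time; instead I would estimate each block by its own heat decay. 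From \eqref{alt_vk_bound}, $\|v_k(t)\|_{L^p}\lesssim N_{1,k+1}^{-\alpha}(t^{-1/2+\alpha}+1)+2^{-k/p}\sum_jN_{j,k}e^{-N_{j,k}^2t}$, so $\int_0^{\bar T_0}\|v_k(t)\|^2_{L^p}\,dt\lesssim N_{1,k+1}^{-2\alpha}(1+\bar T_0)+2^{-2k/p}$, and Minkowski's inequality in $k$ gives $\|v\|_{L^2_tL^p_x}\le\sum_k\|v_k\|_{L^2_tL^p_x}<\infty$ for all $p<\infty$.

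The main step is the Carleson part of the $X$-norm of $v$, i.e.\ $\sup_{x_0,R}\int_0^{R^2}\fint_{B(x_0,R)}|v(x,t)|^2\,dx\,dt\lesssim1$, and this is where the construction's two localizations are genuinely needed, since the single bound $\|v(t)\|_{L^\infty}\lesssim t^{-1/2}$ is exactly critical and alone produces a logarithmic divergence. For $R$ of order the period or larger, periodicity collapses the spatial average to $\fint_{\mathbb T^d}|v(t)|^2$, and the per-block computation above (with $p=2$) bounds the integral by $1+\bar T_0<\infty$. For $R\ll1$ I would let $k_0=k_0(R)$ be the largest index with $C_0M_{1,k_0}^{-1}\le R$ and split the $k$-sum. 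For $k\ge k_0$, $\bar v_k$ is supported in $\Omega_k$ and \eqref{cube_intersection_volume_bound} gives $|\Omega_k\cap B(x_0,R)|\lesssim2^{-(k-k_0)}|B(x_0,R)|$, so $\fint_B|\bar v_k(t)|^2\lesssim2^{-(k-k_0)}\|\bar v_k(t)\|^2_{L^\infty}$, which together with $\int_0^\infty\big(\sum_jN_{j,k}e^{-N_{j,k}^2t}\big)^2dt\lesssim1$ sums over $k\ge k_0$ to $O(1)$ (the discrepancy $v_k-\bar v_k$ is $L^\infty$-small with summable bounds by Proposition~\ref{difference_estimate_proposition}, and contributes negligibly since $t^{-1+2\alpha}$ is integrable near $t=0$). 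For $k<k_0$ the cylinders forming $\Omega_k$ have radius $\gtrsim R$, so $B(x_0,R)$ lies essentially inside one cylinder (where $\fint_B|v_k(t)|^2\lesssim\|v_k(t)\|^2_{L^\infty}$) or outside it (where $v_k$ is negligible), and $\int_0^{R^2}\|v_k(t)\|^2_{L^\infty}\,dt$ is non-negligible only once the activation time $t_k\sim N_{J_d,k}^{-4}$ satisfies $t_k\lesssim R^2$; because the $N_{j,k}$ grow double-exponentially, the band of $k$ with simultaneously $N_{J_d,k}\gtrsim R^{-1/2}$ and $M_{1,k}\lesssim R^{-1}$ contains only $O(1)$ indices, each contributing $O(1)$, while the remaining pre-activation blocks contribute a rapidly convergent tail. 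Summing the regimes yields $v\in X_{[0,\bar T_0]}$, hence $u\in X_{[0,T-T_*]}$.

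Once $u\in X_{[0,T-T_*]}$, the Carleson-measure characterization of $BMO^{-1}$ gives $u\in L^\infty_tBMO^{-1}_x$, and weak-$*$ continuity in time follows in the standard way for bounded mild solutions (density of smooth functions in $\dot F^1_{1,2}$ plus the uniform $BMO^{-1}$ bound); the only delicate point is continuity at the blow-up time, for which I would use that each $v_k$ has $v_k(0)=0$ and is continuous into $L^2$ while the tail $\sum_{k>K}v_k(t)$ is bounded in $H^{-m}$ uniformly in $t$ (frequency localization near $\{N_{j,k}\}$ together with the small-volume supports of $\bar v_k$), so $v(t)\to0$ in $\mathcal D'$ as $t\to0^+$; the same holds for $w^{N_0}$, whence $\lim_{t\to T_*+}u(t)=U(T_*)=\lim_{t\to T_*-}u(t)$, which also verifies the hypothesis of Lemma~\ref{le-weak} and so shows $u$ is a genuine weak solution. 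For $d\ge3$ all directional frequencies coincide, $N_{j,k}=N_{1,k}$, which makes the zero-frequency part of the anti-divergence bounded: Proposition~\ref{difference_estimate_proposition} then gives $\|R_k(t)\|_{L^\infty}\lesssim N_{1,k}^{-1}+\epsilon_0N_{1,k+1}^{-\alpha}$ for $t\ge t_k$, and for $t<t_k$ the block $v_k$ is still being generated and has a bounded primitive of its own (keeping the exact Duhamel kernel, whose factor $1-e^{-2N_{1,k+1}^2t}$ moves the large part of $R_k$ past $t_k$); writing $v=\div G$ with $G=\sum_kG_k$, $G_k$ chosen according to the sign of $t-t_k$, one gets $\|G(t)\|_{L^\infty}\lesssim1$ uniformly, so $v\in L^\infty_t\dot W^{-1,\infty}_x$, and adding $U^{1/N_0}$ and $w^{N_0}$ concludes. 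The hard part throughout is the Carleson estimate of the previous paragraph: only combining the small-volume property \eqref{cube_intersection_volume_bound} of the supports with the double-exponential spacing of the times $t_k$ shows that just $O(1)$ blocks are active in any given parabolic cylinder.
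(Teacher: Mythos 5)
Your proposal uses the same decomposition $u=U+v^{N_0}+w^{N_0}$ and the same two structural ingredients (the small-volume support property \eqref{cube_intersection_volume_bound} and the double-exponential spacing of the frequency scales), but it differs from the paper in two places. For the Koch--Tataru bound, the paper further decomposes $v$ into pieces $u_1,u_2,u_3$, isolating the critically-scaling piece $u_3=-e^{t\Delta}\sum_{j,k}N_{j,k}\Delta\psi_{j,k}$: since $u_3$ is a pure heat flow, $\|u_3\|_{X}\sim\|\sum_{j,k}N_{j,k}\nabla\psi_{j,k}\|_{BMO}$ reduces to a time-independent $BMO$ computation, while the strictly subcritical remainders $u_1,u_2,U,w$ are absorbed by the elementary criterion $\sup_t t^{1/2-\epsilon}\|\cdot\|_{L^\infty}<\infty\Rightarrow\cdot\in X$. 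You instead estimate the Carleson integral for $v$ directly by a case analysis on $R$, which in effect recreates the same $BMO$ argument with the time variable attached. Your route can be made to work (the cross-terms between $k$'s should be handled by Minkowski's inequality in $L^2_{t,x}$, and the relevant activation scale for $\bar v_k$ is $N_{j,k}^{-2}$ rather than $t_k\sim N_{J_d,k}^{-4}$, but the counting conclusion is the same), and the paper's reduction to $BMO$ is simply cleaner. Similarly, for the weak-* continuity into $BMO^{-1}$, the paper invokes the theorem of Hou \cite{hou2024regularity} (membership in $X_{[0,T-T_*]}$ already implies the continuity), whereas you argue directly via distributional continuity plus density and uniform $BMO^{-1}$ bounds; this alternative is viable but the step where the tail $\sum_{k>K}v_k(t)$ is paired small against a fixed smooth test function needs a separate argument, because $v_k$ itself (as opposed to $\bar v_k$) is not frequency-localized near $N_{j,k}$, since the Duhamel integral of $\bar v_{k+1}\otimes\bar v_{k+1}$ produces low-frequency content.

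The one genuine gap is in the $L^\infty_t\dot W^{-1,\infty}_x$ estimate for $d\geq3$. Your claim that Proposition~\ref{difference_estimate_proposition} gives $\|R_k(t)\|_{L^\infty}\lesssim N_{1,k}^{-1}+\epsilon_0N_{1,k+1}^{-\alpha}$ for $t\geq t_k$ is incorrect. Since $\|\bar R_k(t)\|_\infty\sim\sum_je^{-N_{j,k}^2t}$, the antidivergence remains $O(1)$ throughout the long interval $[t_k,N_{1,k}^{-2}]$ (recall $t_k=N_{J_d,k}^{-4}\ll N_{1,k}^{-2}$); there is no antidivergence of $\bar v_k$ of size $O(N_{1,k}^{-1})$ there, so the correct bound is $\|R_k(t)\|_\infty\lesssim N_{1,k+1}^{-\alpha}+\mathbbm1_{t\leq N_{1,k}^{-2}}$. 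To close the estimate one needs the additional short-time bound $\|R_k(t)\|_\infty\lesssim tN_{J_d,k+1}^{2+\varepsilon}$ straight from the defining Duhamel integral, and then one counts the $O(1)$ indices $k$ in the intermediate regime $tN_{J_d,k+1}^{2+2\varepsilon}>1$ and $N_{1,k}^2t\leq1$ using the double-exponential spacing. Your ``keeping the exact Duhamel kernel'' sketch gestures at the short-time suppression, but the stated bound for $t\geq t_k$ is false and the counting of the intermediate band is not carried out.
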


\begin{proof}
First we address the boundedness in $\dot W^{-1,\infty}$ when $d\geq 3$. Recall that, by definition, $u=U+v^{N_0}+w^{N_0}$. Due to the criticality of the norm, we may equivalently estimate $w$ which, by construction, lies in $B_X(0,\delta)$. From this, \eqref{v-pointwise-bounds}, and \eqref{f_bound}, it follows
\begin{align*}
    \|w(t)\|_{\dot W^{-1,\infty}}&\lesssim_{\varepsilon} \int_0^t\|w\otimes w+2(v+U^{1/N_0})\odot w+f+2U^{1/N_0}\odot v\|_{C^\varepsilon}ds\\
    &\lesssim \int_0^t (s^{-1+\alpha-\varepsilon}+s^{-\frac12(1+\varepsilon)}+1)ds\\
    &\lesssim t^{\alpha-\epsilon}+t.
\end{align*}
Taking $\varepsilon$ sufficiently small, this implies $w\in L_t^\infty \dot W_x^{-1,\infty}(\mathbb R^d\times[-T_*,T-T_*])$.

Recall $v_k=\div R_k$. Immediately from the definition and \eqref{eq:Dpsi_bounds}, we estimate
\begin{align*}
    \|\bar R_k\|_\infty&\lesssim \sum_je^{-N_{j,k}^2t}.
\end{align*}
Combining with Proposition~\ref{difference_estimate_proposition}, when $d\geq 3$, this yields
\begin{align*}
    \|R_k(t)\|_\infty&\lesssim N_{1,k+1}^{-\alpha}+\mathbbm1_{t\leq N_{1,k}^{-2}}.
\end{align*}
Alternatively,
\begin{align*}
    \|R_k(t)\|_\infty&\lesssim_{\varepsilon} \int_0^t\|\bar v_{k+1}\otimes\bar v_{k+1}\|_{C^\varepsilon}\lesssim \int_0^t\sum_jN_{j,k+1}^{2+\varepsilon}e^{-2N_{j,k+1}^2t}ds\lesssim tN_{J_d,k+1}^{2+\varepsilon}.
\end{align*}
Thus we have $v=\div R$ where
\begin{align*}
    \|R(t)\|_\infty&\lesssim \sum_{k:tN_{J_d,k+1}^{2+2\varepsilon}\leq1}tN_{J_d,k+1}^{2+\varepsilon}+\sum_{\substack{k:tN_{J_d,k+1}^{2+2\varepsilon}>1\\\wedge N_{1,k}^2t\leq1}}1+\sum_{k:N_{1,k}^2t>1}N_{1,k+1}^{-\alpha}\\
    &\lesssim 1+\#\{k:N_{J_d,k+1}^{-2-2\epsilon}<t\leq N_{1,k}^{-2}\}+t^{b\alpha/2}.
\end{align*}
By \eqref{N_definition}, this quantity is $O(1)$ uniformly in $t>0$, and we conclude the claimed bound on $v$.

Next, we address the weak-* continuity in time into $BMO^{-1}$. For every $t\in[-T_*,T-T_*]\setminus\{0\}$, $u$ is a classical solution and continuity is obvious. At $t=0$, left continuity is clear because $u$ agrees with $U$ on $[-T_*,0]$. (Note that left continuity is vacuous if $T_*=0$.) To show right continuity at $t=0$, it suffices to show $u|_{[0,T-T_*]}\in X_{T-T_*}$ due to the recent result~\cite{hou2024regularity} of Hedong Hou. To this end, we decompose
\begin{align*}
    u=U+w+u_1+u_2+u_3
\end{align*}
where
\begin{align*}
    u_1&=\sum_{j,k}\left(e^{t\Delta}N_{j,k}\Delta\psi_{j,k}+\bar v_{j,k}\right),\\
    u_2&=v-\bar v,\\
    u_3&=-e^{t\Delta}\sum_{j,k}N_{j,k}\Delta\psi_{j,k}.
\end{align*}
All the contributions except for $u_3$ can be controlled crudely using the fact that
\begin{align}\label{X_T_criterion}
\sup_{t\in(0,T-T_*]}t^{\frac12-\epsilon}\|f(t)\|_{L^\infty}<\infty\implies f\in X_{T-T_*}
\end{align}
for any $\epsilon>0$, which is clear from the definition of $X_T$ and H\"older's inequality. The criterion \eqref{X_T_criterion} clearly applies to $U$, which is smooth, as well as to $\|w(t)\|_{\infty}\lesssim t^{-\frac12(1-\alpha)}$ by Proposition~\ref{w-exists-fixed-point-proposition}.

For $u_1$,
\begin{align*}
    e^{t\Delta}N_{j,k}\Delta\psi_{j,k}+\bar v_{j,k}&=N_{j,k}\Delta(e^{t\Delta}-e^{-N_{j,k}^2t})\psi_{j,k}\\
    &=N_{j,k}^{-1}\Delta\phi_k*[e^{t\Delta},a_{j,k}\varphi_{j,k}]\sin(N_{j,k}\eta_j\cdot x)
\end{align*}
and Lemma~\ref{l:commutator} yields
\begin{align*}
    \|u_1\|_{\infty}&\lesssim \sum_{j,k}N_{j,k}(M_{j,k}N_{j,k}^{-1}e^{-N_{j,k}^2t/4}+(M_{j,k}/N_{j,k})^m).
\end{align*}
Arguing as in the proof of Proposition~\ref{prop_f_estimate}, the first term is controlled by $O(t^{-\frac12+\epsilon})$ for $\epsilon>0$ sufficiently small. Taking $m$ sufficiently large, the second term is $O(1)$.

That $u_2\in X_{T-T_*}$ is immediate from Proposition~\ref{difference_estimate_proposition} and \eqref{X_T_criterion}.

For $u_3$, we have
\begin{align*}
    \|u_3\|_{X_{T-T_*}}&\sim\Big\|\div\sum_{j,k}N_{j,k}\nabla\psi_{j,k}\Big\|_{BMO^{-1}_{(T-T_*)^{1/2}}}
\end{align*}
by the definitions of the spaces. Thus it suffices to show
\begin{align*}
    \sum_{j,k}N_{j,k}\nabla\psi_{j,k}\in BMO.
\end{align*}
To this end, we regard $\psi_{j,k}$ as a periodic function on $\mathbb R^d$, and claim that
\begin{align*}
    \sup_{Q}\sum_{j,k}\fint_Q\Big|N_{j,k}\nabla\psi_{j,k}(x)-\Big(\fint_QN_{j,k}\nabla\psi_{j,k}\Big)\Big|dx<\infty
\end{align*}
where the supremum is taken over cubes $Q\subset\mathbb R^d$; clearly, by periodicity, it suffices to consider only cubes below side length $1$, say. We proceed as in \cite{CoiculescuPalasek2025}*{Proposition 3.8}. Let $k(Q)$ and $j(Q)$ be such that
\begin{align*}
    (k(Q),j(Q))=\max\{(k,j):\mathbb N\times\mathcal J_d:N_{j,k}\leq \ell(Q)^{-1}\}
\end{align*}
where $\mathbb N\times\mathcal J_d$ is endowed with the standard dictionary order, and $\ell(Q)$ denotes the side length of the cube.

For a fixed $Q$, there are two cases: if $(k,j)\leq(k(Q),j(Q))$, then we estimate
\begin{align*}
    \Big|N_{j,k}\nabla\psi_{j,k}(x)-\Big(\fint_QN_{j,k}\nabla\psi_{j,k}\Big)\Big|dx&=N_{j,k}\Big|\fint_Q(\nabla\psi_{j,k}(x)-\nabla\psi_{j,k}(y))dy\Big|\\
    &\lesssim N_{j,k}\fint_Q\ell(Q)\|\nabla^2 \psi_{j,k}\|_\infty dy\\
    &\lesssim N_{j,k}\ell(Q)
\end{align*}
by \eqref{eq:Dpsi_bounds}.

If instead $(k,j)>(k(Q),j(Q))$, then
\begin{align*}
    \fint_Q\Big|N_{j,k}\nabla\psi_{j,k}(x)-\Big(\fint_QN_{j,k}\nabla\psi_{j,k}\Big)\Big|dx&\lesssim |Q|^{-1}N_{j,k}\|\nabla\psi_{j,k}\|_{L^1(Q)}\\
    &\lesssim N_{j,k}\|\nabla\psi_{j,k}\|_\infty\frac{|Q\cap\supp\psi_{j,k}|}{|Q|}.
\end{align*}
By \eqref{eq:Dpsi_bounds}, we have that $N_{j,k}\|\nabla\psi_{j,k}\|_\infty=O(1)$.

Let $k_*$ be the minimal choice such that $M_{1,k_*}\geq C_0\ell(Q)^{-1}$. When $k\geq k_*$, the volume ratio is controlled by $2^{-(k-k_*)}$ according to \eqref{Omega_volume_estimate}; otherwise we use the trivial bound $1$. We estimate
\begin{align*}
    \Big\|\sum_{j,k}N_{j,k}\nabla\psi_{j,k}\Big\|_{BMO}        &\lesssim \sup_Q\Big(\sum_{(k,j)\leq(k(Q),j(Q))}N_{j,k}\ell(Q)\\
    &\qquad+\sum_{(k(Q),j(Q))<(k,j)<(k_*,1)}1+\sum_{(k,j)\geq(k_*,1)}2^{-(k-k*)}\Big).
\end{align*}
The first sum is $O(N_{j(Q),k(Q)}\ell(Q))=O(1)$ by definition of $j(Q),k(Q)$. The third sum is clearly bounded as a geometric series.

We claim that the second sum contains boundedly-many terms. Indeed, by definition of $(k(Q),j(Q))$, $(k,j)>(k(Q),j(Q))$ implies $N_{j,k}>\ell(Q)^{-1}$. Similarly, by definition of $k_*$, it contains only terms with $M_{1,k}<C_0\ell(Q)^{-1}$. Thus, the sum contains only $k$'s such that $N_{J_d,k}>\ell(Q)^{-1}$ and $M_{1,k}<C_0\ell(Q)^{-1}$. By \eqref{N_and_M_ordering}, taking $A$ large as needed, there can be at most one such $k$.

Toward showing $u\in L_t^{2,\infty}L_x^\infty\cap L_{t}^2L_x^p$, we decompose $u=U+v+w$. For $U$ and $w$, it once again suffices, by H\"older's inequality, to observe that the $L^\infty$ norms are $O(t^{-\frac12+\epsilon})$. Furthermore, $v\in L_t^{2,\infty}L_x^\infty$ is immediate from \eqref{v-pointwise-bounds} with $n=0$. It suffices to show $v\in L_t^2L_x^p$. By \eqref{vk-pointwise-bounds}, we have
\begin{align*}
    \|v_{k}\|_{L_t^2L_x^p}&\lesssim N_{1,k+1}^{-\alpha}+2^{-k/p}.
\end{align*}
Summing in $k$ proves the claim.
\end{proof}

\textbf{Proof of Theorem \ref{main-thm}:}
It follows immediately from Lemma \ref{le-weak} that the solution $u=U+v+w$ constructed in Section \ref{sec:corrector} is a weak solution of \eqref{eq:NSE} with initial data $u_0=U(0,\cdot)$. Since the background solution $U$ is classical, the upper bound $\|U(t)\|_{L^\infty}\lesssim \frac{1}{\sqrt{t}}$ holds on $t\in[0,T)$ for some $T>0$.  It follows directly from the estimate \eqref{v-pointwise-bounds} that the principal part $v$ also satisfies this upper bound. Regarding the corrector $w$, we have from the fixed point argument and the definition of space $X$ as in \eqref{def-X} 
\[\|w(t)\|_{L^\infty} \lesssim t^{-\frac12+\frac12\alpha}\lesssim \frac{1}{\sqrt{t}}, \ \ 0\leq t<\min\{1,T\}.\]
Thus, the upper bound $\|u(t)\|_{L^\infty}\lesssim \frac{1}{\sqrt{t}}$ on $[0,T)$. To obtain the lower bound, we note that there exists a sequence of times $t_n\to 0+$ such that the principal velocity field $v$ satisfies 
\[v(t_n)\sim \frac{1}{\sqrt{t_n}}e^{-1}.\]
Therefore we have
\[\|u(t_n)\|_{L^\infty}\geq \|v(t_n)\|_{L^\infty}-\|U(t_n)\|_{L^\infty}-\|w(t_n)\|_{L^\infty}\gtrsim \frac{1}{\sqrt{t_n}}.\]
The last conclusion of the theorem follows from Proposition \ref{prop:borderline}.

\textbf{Proof of Theorem \ref{thm-non-unique}:}

Having constructed in Section~\ref{sec:corrector} the solution $u$ with a free parameter $N_0>0$, we define the one-parameter family of solutions $(u^{(\sigma)})_{\sigma\in[0,1]}$ as
\begin{align*}
    u^{(\sigma)}=\begin{cases}
        u|_{N_0=C/\sigma},&\sigma\in(0,1]\\
        U,&\sigma=0,
    \end{cases}
\end{align*}
where $C>0$ is chosen sufficiently large so that Proposition~\ref{w-exists-fixed-point-proposition} applies with $N_0=C$. It follows that, for $\sigma\neq0$, the solution $u^{(\sigma)}$ takes the form
\begin{align*}
    u^{(\sigma)}=U+v^{C/\sigma}+w^{C/\sigma}.
\end{align*}
When $\sigma\in C/\mathbb Z$, clearly the rescaling preserves the $2\pi$-periodicity, so $u^{(\sigma)}$ is well-defined on $\mathbb T^d$. Let us denote by $(\sigma_n)_{n\in\mathbb N}$ a decreasing sequence contained in $C/\mathbb Z\cap[0,1]$.

We claim that the family of solutions $u^{(\sigma_n)}$ converges to $u^{(0)}$ in $L_t^\infty([0,T]; BMO^{-1}_x(\mathbb T^d))$ with the weak-* $BMO^{-1}$ topology. Indeed, by the rescaling, $\hat v^{C/\sigma_n}+\hat w^{C/\sigma_n}$ is supported only in $(C/\sigma_n)\mathbb Z^d$. Since they are zero average, the support must be contained in $\{\xi\in\mathbb Z^d:|\xi|>C/\sigma_n\}$. Further, by criticality, we have
\begin{align*}
    \|u^{(\sigma_n)}-U\|_{L_t^\infty BMO^{-1}}&=\|v^{C/\sigma_n}+w^{C/\sigma_n}\|_{L_t^\infty BMO^{-1}}=\|v+w\|_{L_t^\infty BMO^{-1}}.
\end{align*}
Having shown inclusion in the Koch--Tataru space on the lifetime of the solution, the uniform bound in $BMO^{-1}$ follows from the theorem of \cite{hou2024regularity}. Thus we conclude that for any test function $\varphi$ in the predual, we have
\begin{align*}
    |\langle\varphi,u^{(\sigma_n)}-U\rangle|&=|\langle P_{>C/(2\sigma_n)}\varphi,P_{>C/(2\sigma_n)}(u^{(\sigma_n)}-U)\rangle|\\
    &\lesssim \|P_{>C/(2\sigma_n)}\varphi\|_{L_t^1\dot F^1_{1,2}}\|u^{(\sigma_n)}-U\|_{L_t^\infty BMO^{-1}}.
\end{align*}
The second factor is bounded as shown above, while the first is clearly $o(1)$ as $n\to\infty$ and $\sigma_n\to0$.
\qed

\appendix

\section{}

\begin{lemma}[Commutator estimate for $e^{t\Delta}$]\label{l:commutator}
    Let $a\in C^\infty(\mathbb T^d)$ and $\xi\in\mathbb Z^d$, and define
    \begin{align*}
        A_i\coloneqq |\xi|^{-i}\|\nabla^ia\|_{L^\infty}.
    \end{align*}
    Then
    \begin{equation}\begin{aligned}\label{commutator_inequality}
        &\|\nabla^n[e^{t\Delta},a(x)]\sin(\xi\cdot x)\|_{L^\infty}\\
        &\qquad\lesssim_{m,n} |\xi|^{n}\sum_{i=0}^n \left((A_i^{1-1/m}A_{m+i}^{1/m}+A_i^{1-2/m}A_{m+i}^{2/m})e^{-|\xi|^2t/4}+A_{m+i}\right)
        \end{aligned}\end{equation}
        for $m\geq3+n$. Furthermore,
        \begin{align}\label{heat-decay-estimate}
             \|\nabla^n e^{t\Delta}(a(x)\sin(\xi\cdot x))\|_\infty&\lesssim_{m,n} |\xi|^{n}\left(\sum_{i=0}^{n}A_i e^{-|\xi|^2t/4}+\sum_{i=m}^{n+m}A_{i}\right).
        \end{align}
\end{lemma}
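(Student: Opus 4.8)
The plan is to conjugate away the oscillation so that everything reduces to heat estimates for a \emph{shifted} Laplacian, and then to split the coefficient $a$ into low and high frequencies. Since $\sin(\xi\cdot x)=\tfrac1{2i}(e^{i\xi\cdot x}-e^{-i\xi\cdot x})$ and the right-hand sides of \eqref{commutator_inequality} and \eqref{heat-decay-estimate} are unchanged under $\xi\mapsto-\xi$, it suffices to work with $e^{i\xi\cdot x}$ in place of $\sin(\xi\cdot x)$. On $\mathbb T^d$ one has the exact identity
\[
e^{t\Delta}(a\,e^{i\xi\cdot x})=e^{i\xi\cdot x}\,e^{t\Delta_\xi}a,\qquad \Delta_\xi:=\Delta+2i\xi\cdot\nabla-|\xi|^2,
\]
where $e^{t\Delta_\xi}$ is the Fourier multiplier with symbol $e^{-t|k+\xi|^2}$, $k\in\mathbb Z^d$ (equivalently, convolution against $e^{-i\xi\cdot y}$ times the periodic heat kernel, hence an $L^\infty$–contraction). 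Thus $[e^{t\Delta},a]e^{i\xi\cdot x}=e^{i\xi\cdot x}h(t)$ with $h(t):=e^{t\Delta_\xi}a-e^{-t|\xi|^2}a$, and distributing $\nabla^n$ over the product reduces matters to proving $\|\nabla^i h(t)\|_\infty\lesssim|\xi|^i\big[(A_{i+1}+A_{i+2})e^{-|\xi|^2t/4}+A_{m+i}\big]$ and, for the second bound, $\|\nabla^i e^{t\Delta_\xi}a\|_\infty\lesssim|\xi|^i\big(A_ie^{-|\xi|^2t/4}+A_{m+i}\big)$ for $0\le i\le n$; summing these against the factors $|\xi|^{n-i}$ coming from $\nabla^{n-i}e^{i\xi\cdot x}$, and rewriting $A_{i+1},A_{i+2}$ via interpolation (see below), then yields the lemma.

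Next I would split $a=a_L+a_H$ with $a_L=P_{\le c|\xi|}a$ a smooth Littlewood--Paley projection for a small fixed $c$, so $\widehat{a_H}$ is supported in $|k|\gtrsim|\xi|$. The high part contributes only the non-decaying tail: Bernstein gives $\|\nabla^i a_H\|_\infty\lesssim|\xi|^{-m}\|\nabla^{m+i}a\|_\infty=|\xi|^iA_{m+i}$, and since $e^{t\Delta_\xi}$ is an $L^\infty$–contraction this controls $\|\nabla^i e^{t\Delta_\xi}a_H\|_\infty$ and $\|\nabla^i h_H\|_\infty$ by $|\xi|^iA_{m+i}$ (these do not decay, since the frequencies of $a_H$ can cancel $\xi$, but are arbitrarily small because $m$ is large). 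For the low part, $a_Le^{i\xi\cdot x}$ lives at frequencies $\gtrsim|\xi|$, so $e^{t\Delta_\xi}a_L$ inherits heat decay there; quantitatively, for $g$ frequency-localized to $|k|\le c|\xi|$ one has the standard frequency-localized bound $\|e^{s\Delta_\xi}g\|_\infty\lesssim e^{-|\xi|^2s/4}\|g\|_\infty$ (absorbing the polynomial-in-$s|\xi|^2$ losses into the exponent). Feeding this into the Duhamel identity
\[
e^{t\Delta_\xi}a_L-e^{-t|\xi|^2}a_L=\int_0^te^{-(t-s)|\xi|^2}\,e^{s\Delta_\xi}(\Delta+2i\xi\cdot\nabla)a_L\,ds,
\]
commuting $\nabla^i$ inside, and carrying out the elementary $s$–integral, one gains $|\xi|^{-2}e^{-|\xi|^2t/4}$ and is left with $\|\nabla^i(\Delta+2i\xi\cdot\nabla)a_L\|_\infty\lesssim\|\nabla^{i+2}a\|_\infty+|\xi|\|\nabla^{i+1}a\|_\infty=|\xi|^{i+2}(A_{i+1}+A_{i+2})$, hence $\|\nabla^ih_L\|_\infty\lesssim|\xi|^i(A_{i+1}+A_{i+2})e^{-|\xi|^2t/4}$. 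For \eqref{heat-decay-estimate} one skips the Duhamel step and bounds $\|\nabla^ie^{t\Delta_\xi}a_L\|_\infty\lesssim e^{-|\xi|^2t/4}\|\nabla^ia_L\|_\infty\lesssim|\xi|^iA_ie^{-|\xi|^2t/4}$ directly, while $\|\nabla^ie^{t\Delta_\xi}a_H\|_\infty\lesssim|\xi|^iA_{m+i}$ as above.

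To match the precise coefficients in \eqref{commutator_inequality}, I would then use that a $2\pi$–periodic $C^\infty$ function is a bounded $C^\infty$ function on $\mathbb R^d$, so the Landau--Kolmogorov (Gagliardo--Nirenberg) inequality applies to $\nabla^i a$ without boundary corrections:
\[
\|\nabla^{i+1}a\|_\infty\lesssim\|\nabla^i a\|_\infty^{1-1/m}\|\nabla^{i+m}a\|_\infty^{1/m},\qquad
\|\nabla^{i+2}a\|_\infty\lesssim\|\nabla^i a\|_\infty^{1-2/m}\|\nabla^{i+m}a\|_\infty^{2/m},
\]
valid since $m\ge 3+n\ge 2$. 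Rescaling gives $A_{i+1}\lesssim A_i^{1-1/m}A_{m+i}^{1/m}$ and $A_{i+2}\lesssim A_i^{1-2/m}A_{m+i}^{2/m}$, and substituting into the bounds above and summing over $i$ produces exactly \eqref{commutator_inequality} and \eqref{heat-decay-estimate}.

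The hard part is the second step, and it is not any single inequality but the bookkeeping: for each term produced by the Leibniz rule one must track whether a derivative lands on the oscillation $e^{i\xi\cdot x}$ (costing a factor $|\xi|$), on the slowly varying coefficient (costing a base frequency, hence a genuinely small gain), or is absorbed by the time integral, and then organize everything into a multiple of $e^{-|\xi|^2t/4}$ with coefficient controlled by $A_{i+1}$ or $A_{i+2}$, plus the tiny non-decaying tail $A_{m+i}$. The one other delicate point is the frequency-localized heat estimate for the shifted semigroup $e^{s\Delta_\xi}$ with constants uniform in $\xi$, which is where the exponential rate gets pinned down --- although for the applications in the paper any fixed positive rate, not just $1/4$, would suffice.
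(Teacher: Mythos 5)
Your conjugation identity is the same as the paper's starting point: the paper writes $[e^{t\Delta},a]\sin(n\cdot x)=\sin(n\cdot x)\,m(\nabla/i)a$ with $m(\zeta)=e^{-t|\zeta+n|^2}-e^{-t|n|^2}$, which is exactly your $h(t)=e^{t\Delta_\xi}a-e^{-t|\xi|^2}a$. From there the two routes diverge in a genuine way. The paper estimates the multiplier symbol $m$ directly via Taylor's theorem, then runs a case analysis on $t\lessgtr|\xi|^{-2}$, decomposing the remaining multiplier into (up to) three Littlewood--Paley regimes with an \emph{adaptive} frequency cutoff $N_0=(\|\nabla^ma\|_\infty/\|a\|_\infty)^{1/m}$, so that the interpolation between $A_i$ and $A_{m+i}$ comes out automatically. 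You instead apply Duhamel to $e^{t\Delta_\xi}-e^{-t|\xi|^2}\Id$ to produce a factor of $(\Delta+2i\xi\cdot\nabla)$, perform a \emph{fixed} low/high split at $c|\xi|$, prove bounds involving $A_{i+1}$ and $A_{i+2}$, and then invoke the periodic Landau--Kolmogorov inequality as a separate post-processing step. Your version is more modular and avoids the case analysis, at the cost of needing (i) the frequency-localized decay estimate $\|e^{s\Delta_\xi}g\|_\infty\lesssim e^{-|\xi|^2s/4}\|g\|_\infty$ for $g$ localized to $|k|\le c|\xi|$, which you correctly flag as the delicate point (it holds uniformly, since for $|k|\leq c|\xi|$ the symbol $e^{-s|k+\xi|^2}$ decays at rate $\geq(1-c)^2|\xi|^2 > |\xi|^2/4$ and the polynomial kernel losses are absorbed in the exponential gap), and (ii) the interpolation inequality as an external ingredient. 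The paper's version is more self-contained but more case-heavy. Both are correct; the hypothesis $m\geq 3+n$ is overkill for your route (you only use $m\geq 2$), which is fine.
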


\begin{proof}
    First we prove the special case of \eqref{commutator_inequality} for $n=0$; the general case then follows easily from the Leibniz rule. Once we establish \eqref{commutator_inequality}, it is straightforward to conclude \eqref{heat-decay-estimate}. Indeed,
    \begin{align*}
        \nabla^n e^{t\Delta}(a(x)\sin(\xi\cdot x))&=\nabla^n(a\sin(\xi\cdot x))e^{-|\xi|^2t}+\nabla^n[e^{t\Delta},a]\sin(\xi\cdot x).
    \end{align*}
    The first term can be bounded trivially by the Leibniz rule, For the second, we apply \eqref{commutator_inequality} and Young's inequality to the products of the form $A^{1-1/m}B^{1/m}$, etc.

    Toward \eqref{commutator_inequality}, we have the exact identity
    \begin{align*}
        [e^{t\Delta},a(x)]e^{ix\cdot n}&=\mathcal F^{-1}\left(e^{-t|\xi|^2}\mathcal F(a(x)e^{in\cdot x})(\xi)\right)-a(x)e^{t\Delta}(e^{in\cdot x})\\
        &=\mathcal F^{-1}\left(e^{-t|\xi|^2}\hat a(\xi-n)\right)-a(x)e^{-|n|^2t+in\cdot x}\\
        &=\sum_{\xi\in\mathbb Z^d}e^{ix\cdot\xi}(\hat a(\xi-n)e^{-t|\xi|^2}-\hat a(\xi)e^{-t|n|^2+in\cdot x})\\
        &=e^{ix\cdot n}\sum_{\xi\in\mathbb Z^d}e^{ix\cdot\xi}(e^{-t|\xi+n|^2}-e^{-t|n|^2})\hat a(\xi).
    \end{align*}
    Taking the imaginary part and defining the multiplier
    \begin{align*}
        m(\xi)=e^{-t|\xi+n|^2}-e^{-t|n|^2},
    \end{align*}
    we have
    \begin{align}
        [e^{t\Delta},a(x)]\sin(x\cdot n)&=\sin(x\cdot n)m(\nabla/i)a(x).\label{commutator-identity}
    \end{align}

%%%%%%%%%%%%%%%%%%%%%%%%%%%%%%%%%%%%%%%%%%%%%%%%%%%%%%%%%%%%%%%%%%%%%
    First we use Taylor's theorem to estimate
    \begin{align}
        |m(\xi)|&\lesssim t|\xi|(|\xi|+|n|)e^{-t\min(|\xi+n|,|n|)^2}.\label{m-bound-1}
    \end{align}
    When $t\leq 10|n|^{-2}$, we estimate \eqref{commutator-identity} using \eqref{m-bound-1}, bounding the exponential factor by $1$. This yields
    \begin{align*}
        \|[e^{t\Delta},a(x)]\sin(n\cdot x)\|_\infty&\lesssim\sum_{N\leq N_0}\|P_Nm(\nabla/i)a\|_\infty+\sum_{N> N_0}N^{-m}\|P_Nm(\nabla/i)\nabla^m a\|_\infty\\
        &\lesssim\sum_{N\leq N_0}tN(|n|+N)\|a\|_\infty+\sum_{N>N_0}tN^{-m+1}(|n|+N)\|\nabla^ma\|_\infty\\
        &\lesssim \Big(\frac{N_0}{|n|}+\left(\frac{N_0}{|n|}\right)^2\Big)\|a\|_\infty+\Big(\frac{1}{|n|N_0^{m-1}}+\frac1{|n|^2N_0^{m-2}}\Big)\|\nabla^ma\|_\infty
    \end{align*}
    by standard multiplier estimates. Setting $N_0=(\|\nabla^{m}a\|_\infty/\|a\|_\infty)^{1/m}$, we reach the claimed upper bound.

    If instead $t>10|n|^{-2}$, we decompose
    \begin{align*}
        \|[e^{t\Delta},a(x)]\sin(n\cdot x)\|_\infty&\leq I_1+I_2+I_3
    \end{align*}
    where $I_1,I_2,I_3$ are the projections to $N\leq (t|n|)^{-1}$, $(t|n|)^{-1}<N\leq|n|/10$, and $N>|n|/10$.
    
    In the low-frequency case, we have $|\xi|\leq 2N\leq2/(t|n|)\leq |n|/5$, so the exponential in \eqref{m-bound-1} is controlled by $e^{-t|n|^2/2}$. Combining with \eqref{commutator-identity}, we have
    \begin{align*}
        I_1&=\sum_{N\leq\min(N_0,(t|n|)^{-1})}\|P_Nm(\nabla/i)a\|_\infty\\
        &\qquad+\sum_{N_0<N\leq\max(N_0,(t|n|)^{-1})}N^{-m}\|P_Nm(\nabla/i)\nabla^ma\|_\infty\\
        &\lesssim \sum_{N\leq N_0}tN(|n|+N)\|a\|_\infty e^{-t|n|^2/2}+\sum_{N>N_0}tN^{-m+1}(|n|+N)\|\nabla^ma\|_\infty e^{-t|n|^2/2}\\
        &\lesssim t|n|^2\left(\frac{N_0}{|n|}(1+\frac{N_0}{|n|})\|a\|_\infty +|n|^{-1}N_0^{-m+1}(1+\frac{N_0}{|n|})\|\nabla^ma\|_\infty\right) e^{-t|n|^2/2}\\
        &\lesssim \left(\frac{\|a\|_\infty^{1-1/m}\|\nabla^ma\|_\infty^{1/m}}{|n|} + \frac{\|a\|_\infty^{1-2/m}\|\nabla^ma\|_\infty^{2/m}}{|n|^2}\right) e^{-t|n|^2/4}.
    \end{align*}
    
Next, we estimate
\begin{align}
    |m(\xi)|\lesssim e^{-t|n|^2/2}\label{m-bound-2}
\end{align}
for all $|\xi|\leq|n|/4$. In particular, when $N\leq|n|/10$, \eqref{m-bound-2} holds for all $\xi$ in the support of the symbol of $P_N$. It follows that the intermediate frequencies are bounded by 
    \begin{align*}
        I_2&\lesssim\sum_{(t|n|)^{-1}<N\leq|n|/10}N^{-1}\|P_Nm(\nabla/i)\nabla a\|_\infty\\
        &\lesssim \sum_{N>(t|n|)^{-1}}N^{-1}\|\nabla a\|_\infty e^{-t|n|^2}\\
        &\lesssim t|n|\|\nabla a\|_\infty e^{-t|n|^2}\\
        &\lesssim |n|^{-1}\|a\|_\infty^{1-1/m}\|\nabla^ma\|_\infty^{1/m} e^{-t|n|^2/4}.
    \end{align*}
Finally, for the high frequencies, we use the trivial bound $|m(\xi)|\leq1$ to find
    \begin{align*}
        I_3&=\sum_{N>|n|/10}N^{-m}\|P_Nm(\nabla/i)\nabla^{m} a\|_\infty\lesssim |n|^{-m}\|\nabla^{m} a\|_\infty.
    \end{align*}
    
\end{proof}

\begin{lemma}\label{l:heat_stationary_phase}
Let $\xi\in\mathbb Z^d$, $a\in C^\infty(\mathbb T^d; \mathbb R^d)$, and $A_i$ be as in Lemma~\ref{l:commutator}. Additionally define $A_{i,\kappa}\coloneqq|\xi|^{-i-\kappa}\|\nabla^ia\|_{C^\kappa}$ for $\kappa\in(0,1)$. Then
\begin{align*}
\|\nabla^ne^{t\Delta}\mathcal R\mathbb P(ae^{i\xi\cdot x})\|_{C^\kappa}&\lesssim |\xi|^{n+\kappa}\left(|\xi|^{-1}\sum_{i=0}^{m-1}A_{i,\kappa}e^{-|\xi|^2t/4}+\sum_{i=m}^{n+2m}
A_{i,\kappa}\right).
\end{align*}
\end{lemma}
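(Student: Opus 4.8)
The plan is to reduce the estimate to a Fourier-multiplier bound on $a$ alone by exploiting the modulation structure, in the same spirit as the proof of Lemma~\ref{l:commutator}. Let $\sigma_t(\zeta)$ denote the matrix-valued symbol of $e^{t\Delta}\mathcal R\mathbb P$, i.e. $\sigma_t(\zeta)=e^{-t|\zeta|^2}|\zeta|^{-2}\widehat{\newD}(\zeta)\bigl(\Id-\zeta\otimes\zeta/|\zeta|^2\bigr)$ for $\zeta\neq0$ (and $0$ at $\zeta=0$); this is smooth away from the origin, homogeneous of degree $-1$ there up to the Gaussian factor, and satisfies $|\partial_\zeta^\alpha\sigma_t(\zeta)|\lesssim_\alpha|\zeta|^{-1-|\alpha|}e^{-t|\zeta|^2/2}$. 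Since $\widehat{ae^{i\xi\cdot x}}(\zeta)=\hat a(\zeta-\xi)$, one has the exact identity
\[
e^{t\Delta}\mathcal R\mathbb P(ae^{i\xi\cdot x})=e^{i\xi\cdot x}\,\sigma_t(\nabla/i+\xi)\,a(x).
\]
Expanding $\nabla^n$ by the Leibniz rule, each derivative hitting $e^{i\xi\cdot x}$ costs a factor $|\xi|$, and the remaining derivatives commute through the Fourier multiplier; thus it suffices to bound $\|\sigma_t(\nabla/i+\xi)\nabla^j a\|_{C^\kappa}$ by $|\xi|^{j+\kappa}$ times the right-hand side, for $0\le j\le n$.

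Next I would split $a=P_{\le|\xi|/10}a+P_{>|\xi|/10}a$ with a smooth Littlewood--Paley cutoff. On the low piece only frequencies $\zeta$ with $|\zeta|\le|\xi|/5$ matter, hence $|\zeta+\xi|\ge|\xi|/2$, so $\zeta\mapsto\sigma_t(\zeta+\xi)$ is smooth on that range with $|\partial_\zeta^\alpha\sigma_t(\zeta+\xi)|\lesssim_\alpha|\xi|^{-1-|\alpha|}e^{-t|\xi|^2/4}$ for $|\alpha|\le m$. Taylor-expanding $\sigma_t(\,\cdot+\xi)$ to order $m-1$ about $\zeta=0$ turns the polynomial part into a sum $\sum_{i=0}^{m-1}$ of constant-coefficient multipliers, the $i$-th being a derivative of $a$ of order $i$ with coefficient of size $|\xi|^{-1-i}e^{-t|\xi|^2/4}$; the integral remainder is an order-$m$ multiplier of the same type acting on $\nabla^m P_{\le|\xi|/10}a$. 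Estimating each term in $C^\kappa$ (using $\|\nabla^i P_{\le|\xi|/10}a\|_{C^\kappa}\lesssim\|\nabla^i a\|_{C^\kappa}$ and a rescaled Mikhlin-type multiplier bound for the remainder) and reinstating the $|\xi|^j$ factors reproduces exactly $|\xi|^{n+\kappa}|\xi|^{-1}\sum_{i=0}^{m-1}A_{i,\kappa}e^{-t|\xi|^2/4}$, with the remainder feeding into the second sum.

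For the high piece $P_{>|\xi|/10}a$ the Gaussian decay is lost, but smoothness of $a$ is gained. The point is that the kernel of $\mathcal R\mathbb P$ on $\mathbb T^d$, being that of a degree $-1$ multiplier smooth away from the origin, is bounded near $x=0$ by $|x|^{-(d-1)}\in L^1(\mathbb T^d)$ for $d\ge2$; hence $\mathcal R\mathbb P$ is bounded $L^\infty\to L^\infty$ (and $C^\kappa\to C^\kappa$) on $\mathbb T^d$, and $\|e^{t\Delta}\|$ is bounded uniformly in $t$. Combining this with $\|P_{>|\xi|/10}\nabla^j a\|_{C^\kappa}\lesssim|\xi|^{j-m}\|\nabla^{m}a\|_{C^\kappa}$ after shifting $m$ derivatives onto $a$, summing the geometrically decreasing dyadic pieces (legitimate since $m>n+\kappa$), and multiplying back the $|\xi|^j$ factors, yields a bound by $|\xi|^{n+\kappa}A_{m,\kappa}$, absorbed in the second sum; the remaining terms $A_{i,\kappa}$ with $m<i\le n+2m$ are there only to cover the Leibniz tails and the finitely many derivatives consumed by the multiplier estimate on the low piece.

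The main obstacle is precisely the non-smoothness of the symbol of $\mathcal R\mathbb P$ at $\zeta=0$: after modulation this singularity sits at the frequencies of $a$ of size $\approx|\xi|$, so one cannot expect Gaussian decay on all of frequency space and must instead pay with high derivatives of $a$ in the second sum. Getting the two regimes to fit cleanly --- choosing the cutoff scale, carrying the Hölder exponent through the rescaled Mikhlin-type bound, and verifying the $t$-independent $L^\infty$-boundedness of $\mathcal R\mathbb P$ on the torus --- is the delicate bookkeeping, though each individual step is standard.
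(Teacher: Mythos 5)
Your argument is correct in substance but takes a genuinely different route from the paper. The paper never works on the symbol side: it discards $\mathbb P$ (bounded on $C^\kappa$), reduces $\mathcal R$ to $\Delta^{-1}\nabla$, and then invokes the iterated anti-divergence identity of De Lellis--Sz\'ekelyhidi, $e^{t\Delta}(ae^{i\xi\cdot x})=-i\sum_{j=0}^{m-1}\div\big(\tfrac{\xi}{|\xi|^2}F_j\big)+F_m$ with $F_j=e^{t\Delta}\big((\tfrac{i\xi}{|\xi|^2}\cdot\nabla)^j a\, e^{i\xi\cdot x}\big)$, so that the gain of $|\xi|^{-1}$ per step and the Gaussian factor come from Schauder estimates for $\Delta^{-1}\nabla$ combined with the already-proved heat estimate \eqref{heat-decay-estimate} of Lemma~\ref{l:commutator}, the zero mode being handled separately via $\big|\fint ae^{i\xi\cdot x}\big|=O(|\xi|^{-m}\|\nabla^m a\|_\infty)$. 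Your conjugation by the modulation and Taylor expansion of the shifted symbol on the low-frequency part of $a$ is the Fourier-side counterpart of the same non-stationary-phase mechanism, and routing the singular frequency $\zeta=-\xi$ (the paper's mean term) through the high-frequency piece is legitimate; what the paper's route buys is that it recycles Lemma~\ref{l:commutator} instead of re-proving multiplier bounds, while yours is self-contained. Two bookkeeping points you should tighten: (i) the uniform-in-$t$ $L^\infty$ and $C^\kappa$ boundedness of $e^{t\Delta}\mathcal R\mathbb P$ on $\mathbb T^d$ needs, besides the local $|x|^{-(d-1)}$ bound, integrability of the kernel away from the singularity, which follows from the symbol-class decay $|\partial^\alpha_\zeta(\mathrm{symbol})|\lesssim|\zeta|^{-1-|\alpha|}$ but is not implied by the local bound alone; (ii) as written, your high-frequency estimate loses a factor $|\xi|^{\kappa}$ against the stated right-hand side, since multiplying by $e^{i\xi\cdot x}$ costs $|\xi|^{\kappa}$ in $C^\kappa$ and the step $\|P_{>|\xi|/10}\nabla^j a\|_{C^\kappa}\lesssim|\xi|^{j-m}\|\nabla^{m}a\|_{C^\kappa}$ does not recover it; this is repaired by using instead the sharper Bernstein-type bound $\|P_{>|\xi|/10}\nabla^j a\|_{L^\infty}\lesssim|\xi|^{j-m-\kappa}\|\nabla^{m}a\|_{C^\kappa}$, whose extra $|\xi|^{-\kappa}$ exactly cancels the modulation cost and lands on $|\xi|^{n+\kappa}A_{m,\kappa}$, which the second sum absorbs.
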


\begin{proof}

The Leray projection is well-known to be bounded on $C^\kappa$ so it can be removed immediately. For the rest, we recall the following identity from \cite{DeLellisSzekelyhidi2013}*{Prop.~5.2}:
\begin{align*}
    e^{t\Delta}(a(x)e^{i\xi\cdot x})&=-i\sum_{j=0}^{m-1}\div\Big(\frac\xi{|\xi|^2}F_j(x)\Big)+F_m(x)
\end{align*}
where
\begin{align*}
F_j(x)&=e^{t\Delta}(\left(\frac{i\xi}{|\xi|^2}\cdot\nabla\right)^ja(x)e^{i\xi\cdot x}).
\end{align*}
Since we can write $\mathcal Rf=\Delta^{-1}(2\nabla\odot f-(\div f)\Id)$, it suffices to show the estimate with $\mathcal R$ replaced by $\Delta^{-1}\nabla$. By Schauder estimates,
\begin{align*}
    \|e^{t\Delta}\Delta^{-1}\nabla(ae^{i\xi\cdot x})\|_{C^{n,\kappa}}&\lesssim \sum_{j=0}^{m-1}|\xi|^{-1}\|F_j\|_{C^{n,\kappa}}+\|F_m\|_{C^{n,\kappa}}+\left|\fint ae^{i \xi\cdot x}\right|.
\end{align*}
Clearly the last term is $O(|\xi|^{-m}\|\nabla^ma\|_{L^\infty})$. For the others, by \eqref{heat-decay-estimate} and interpolation, we have
\begin{align*}
\|F_j\|_{C^{n,\kappa}}&\lesssim |\xi|^{n}\Big(\sum_{i=0}^nA_{i+j,\kappa}e^{-|\xi|^2t/4}+\sum_{i=m}^{n+m}
A_{i+j,\kappa}\Big)
\end{align*}
Combining these estimates and removing the redundant contributions yields the claim.
\end{proof}

\begin{lemma}[Oscillation estimate]\label{l:oscillation_estimate}
Let $a\in C^\infty(\mathbb T^d;\mathbb R^d)$ and $b\in C^\infty(\mathbb T^d;\mathbb R)$, where $\mathbb T^d$ is the $2\pi$-periodic torus. Let $\lambda\in\mathbb N$ be large and $\Xi\in \mathbb Z^d$ with $|\Xi|\in\lambda\mathbb N$. Then, for any $n\geq0$, $m\geq1$, and $\kappa\in(0,1)$, we obtain 
\begin{align*}
&\Big\|\nabla^ne^{t\Delta}\mathcal R\mathbb P\Big(a(x)\Big(b(\lambda x)\sin^2(\Xi\cdot x)-\fint_{\mathbb T^d}b(\lambda y)\sin^2(\Xi\cdot y)dy\Big)\Big)\Big\|_{C^\kappa}\\
&\qquad\lesssim_{m,n,\kappa} \Bigg(\sum_{i=0}^{m-1}(\lambda^{n-i-1}+|\Xi|^{n-i-1})\|a\|_{C^{i,\kappa}}e^{-\lambda^2t/4}\\
&\qquad\qquad\qquad\qquad+\sum_{i=m}^{n+2m}
(\lambda^{n-i}+|\Xi|^{n-i})\|a\|_{C^{i,\kappa}}\Bigg)\|\nabla^{n+2d}b\|_{L^\infty}
\end{align*}
\end{lemma}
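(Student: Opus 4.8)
The plan is to reduce the estimate to the single-frequency bound of Lemma~\ref{l:heat_stationary_phase} by Fourier-expanding the mean-zero oscillatory factor, and then to sum the resulting lattice series.

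First I would set $h(x)\coloneqq b(\lambda x)\sin^2(\Xi\cdot x)-\fint_{\mathbb T^d}b(\lambda y)\sin^2(\Xi\cdot y)\,dy$ and Fourier-expand it. Using $\sin^2(\Xi\cdot x)=\tfrac12-\tfrac14 e^{2i\Xi\cdot x}-\tfrac14 e^{-2i\Xi\cdot x}$ together with $b(\lambda x)=\sum_{k\in\mathbb Z^d}\hat b(k)e^{i\lambda k\cdot x}$, one checks that every Fourier mode of $h$ lies at a frequency in $\lambda\mathbb Z^d\cup(\lambda\mathbb Z^d+2\Xi)\cup(\lambda\mathbb Z^d-2\Xi)$; the hypothesis relating $\Xi$ and $\lambda$ (which forces $2\Xi\in\lambda\mathbb Z^d$, say $2\Xi=\lambda q$) collapses all three sets to $\lambda\mathbb Z^d$, and the subtraction of the mean removes exactly the zero mode. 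Thus
\[
h(x)=\sum_{k\in\mathbb Z^d\setminus\{0\}}\hat h(k)\,e^{i\lambda k\cdot x},\qquad |\hat h(k)|\lesssim |\hat b(k)|+|\hat b(k-q)|+|\hat b(k+q)|,
\]
and, crucially, every active frequency $\lambda k$ satisfies $|\lambda k|\geq\lambda$.

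Next I would apply $\nabla^n e^{t\Delta}\mathcal R\mathbb P$ term by term to $a\,h=\sum_{k\neq0}\hat h(k)\,a\,e^{i\lambda k\cdot x}$ (the operators $\mathbb P$ and $\mathcal R$ are already built into Lemma~\ref{l:heat_stationary_phase}) and invoke Lemma~\ref{l:heat_stationary_phase} with $\xi=\lambda k$, which after substituting $A_{i,\kappa}=(\lambda|k|)^{-i-\kappa}\|\nabla^i a\|_{C^\kappa}$ and $\|\nabla^i a\|_{C^\kappa}\leq\|a\|_{C^{i,\kappa}}$ gives, for each $k$,
\[
\big\|\nabla^n e^{t\Delta}\mathcal R\mathbb P(a\,e^{i\lambda k\cdot x})\big\|_{C^\kappa}\lesssim \sum_{i=0}^{m-1}(\lambda|k|)^{n-1-i}\|a\|_{C^{i,\kappa}}e^{-\lambda^2|k|^2 t/4}+\sum_{i=m}^{n+2m}(\lambda|k|)^{n-i}\|a\|_{C^{i,\kappa}}.
\]
Multiplying by $|\hat h(k)|$ and summing over $k\neq0$, I would split the sum into three regimes: (a) $|k|$ bounded, where $|\lambda k|\sim\lambda$ and $|\hat h(k)|$ is controlled by $\|b\|_{L^\infty}$ (dominated by $\|\nabla^{n+2d}b\|_{L^\infty}$ in the applications of the lemma), producing the $\lambda^{n-i-1}e^{-\lambda^2 t/4}$ and $\lambda^{n-i}$ terms; (b) $k$ within bounded distance of $\pm q$, where $|\lambda k|\sim|\lambda q|=2|\Xi|$ and $e^{-\lambda^2|k|^2 t/4}\leq e^{-|\Xi|^2 t}\leq e^{-\lambda^2 t/4}$, producing the $|\Xi|^{n-i-1}e^{-\lambda^2 t/4}$ and $|\Xi|^{n-i}$ terms; and (c) the complementary far range, where the torus Fourier decay $|\hat b(\ell)|\lesssim|\ell|^{-(n+2d)}\|\nabla^{n+2d}b\|_{L^\infty}$ applied to the three shifts makes $\sum_{k}|\hat h(k)|\,(\lambda|k|)^{n}$ convergent, with the resulting contribution bounded by the displayed right-hand side. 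Bounding $e^{-\lambda^2|k|^2 t/4}$ by $e^{-\lambda^2 t/4}$ throughout and collecting the three regimes yields the claim.

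The main obstacle is the summability in regime (c): because $\nabla^n$ produces up to $n$ powers of $|\lambda k|$, closing the lattice sum over $k\in\mathbb Z^d$ requires $|\hat b(\ell)|$ to decay like $|\ell|^{-(n+d)-\varepsilon}$, which is precisely why the statement is posed with $n+2d$ derivatives of $b$---the extra $d$ beyond the naive $n+d$ being the usual safety margin for an integrable tail over $\mathbb Z^d$. A secondary point to watch is that the coefficients of $h$ that are \emph{not} suppressed by derivatives of $b$ (morally $\hat b(0)$ and its $\pm q$ shifts) always multiply genuinely high-frequency exponentials---of frequency $\sim\lambda$ or $\sim|\Xi|$---so they still receive the full gain from Lemma~\ref{l:heat_stationary_phase}; subtracting the mean is exactly what removes the one mode (frequency $0$) that would otherwise receive no gain.
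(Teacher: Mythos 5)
Your proposal is correct and follows essentially the same route as the paper's proof: Fourier-expand the mean-zero oscillatory factor into modes on the lattice $\lambda\mathbb Z^d$ (with the $\pm2\Xi$ shifts), apply Lemma~\ref{l:heat_stationary_phase} to each single mode $a\,e^{i\lambda k\cdot x}$, and sum the lattice series using the decay $|\hat b(\ell)|\lesssim\langle\ell\rangle^{-n-2d}\|\nabla^{n+2d}b\|_{L^\infty}$. The one wrinkle you flag---that the coefficient at frequency $\pm2\Xi$ is morally $\hat b(0)$, controlled only by $\|b\|_{L^\infty}$ rather than by $\|\nabla^{n+2d}b\|_{L^\infty}$---is present in the paper's argument as well, so it does not distinguish your approach from theirs.
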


\begin{proof}
    Define $f(x)=b(x)\sin^2(\Xi\cdot x/\lambda)$. Clearly $f$ is periodic and has a Fourier series given by
    \begin{align*}
        \hat f(\xi)&=\mathcal F\Big(b(x)(\frac12-\frac14e^{2i\Xi\cdot x/\lambda}-\frac14e^{-2i\Xi\cdot x/\lambda})\Big)(\xi)\\
        &=\frac12\hat b(\xi)-\frac14\hat b(\xi-2\Xi/\lambda)-\frac14\hat b(\xi+2\Xi/\lambda)
    \end{align*}
    for $\xi\in\mathbb Z^d$.

From here we estimate using Lemma~\ref{l:heat_stationary_phase},
\begin{align*}
&\Big\|\hat f(\xi)\nabla^ne^{t\Delta}\mathcal R\mathbb P\Big(a(x)e^{i\lambda x\cdot \xi}\Big)\|_{C^\kappa}\\
&\quad\lesssim\Big(|\hat b(\xi)|+|\hat b(\xi-2\Xi/\lambda)|+|\hat b(\xi+2\Xi/\lambda)|\Big)\\
&\quad\qquad\times|\lambda\xi|^{n+\kappa}\Big(|\lambda\xi|^{-1}\sum_{i=0}^{m-1}A_{i,\kappa}e^{-|\lambda\xi|^2t/4}+\sum_{i=m}^{n+2m}
A_{i,\kappa}\Big)\\
&\quad\lesssim\Big(\langle\xi\rangle^{-n-2d}+\langle\xi-2\Xi/\lambda\rangle^{-n-2d}+\langle\xi+2\Xi/\lambda\rangle^{-n-2d}\Big)\|\nabla^{n+2d}b\|_{L^\infty}\\
&\quad\qquad\times\Big(\sum_{i=0}^{m-1}|\lambda\xi|^{n-i-1}\|a\|_{C^{i,\kappa}}e^{-|\lambda\xi|^2t/4}+\sum_{i=m}^{n+2m}
|\lambda\xi|^{n-i}\|a\|_{C^{i,\kappa}}\Big).
\end{align*}
After bounding $e^{-|\lambda\xi|^2t/4}$ by $e^{-\lambda^2t/4}$, we sum over $\xi\in\mathbb Z^d\setminus0$, which yields the claim.
\end{proof}

% \begin{lemma}[Oscillation error estimate - no $\nabla^n$] \label{l:Oscillation_error-old}
%     Let $a, f\in C^\infty(\mathbb T^d)$, where $f$ is zero mean and $2\pi/M$-periodic for some $M \in \mathbb{N}$.
% Then    
% \begin{equation}\label{heat-decay-estimate-oscillation}
% \|e^{t\Delta}(af)\|_\infty  \lesssim_{m} \|a\|_\infty \|f\|_{\infty}  e^{-tM^2/4} + M^{-m}\|\nabla^{m}a\|_\infty \|f\|_\infty.
% \end{equation}
% \end{lemma}
% \begin{proof}
% First we decompose
% \[
% a(x)f(x) = P_{M/10\leq\cdot\leq10M}\Big((P_{\leq M/2}a)f\Big)+(P_{>M/2}a)f.
% \]
% Then
% \[
% \begin{split}
% \|e^{t\Delta}(af)\|_\infty
% &\lesssim \|(P_{\leq M/2}a) f\|_\infty  e^{-tM^2/4} +\sum_{N>M/2} N^{-m}\|\nabla^{m}P_Na\|_\infty\|f\|_\infty\\
% &\lesssim \|a\|_\infty \|f\|_{\infty} M^{\ell} e^{-tM^2/4}+
% M^{-m}\|\nabla^{m}a\|_\infty \|f\|_\infty.
% \end{split}
% \]

% \end{proof}

\bibliographystyle{amsrefs}
\bibliography{NSE}

\end{document}